%%%%%%%%%%%%%%%%%%%%%%% file template.tex %%%%%%%%%%%%%%%%%%%%%%%%%
%
% This is a template file for P&S 
%
% Copy it to a new file with a new name and use it as the basis
% for your article
%
%%%%%%%%%%%%%%%%%%%%%%%%   EDP Sciences  %%%%%%%%%%%%%%%%%%%%%%%%%%
%
\documentclass{ps}
%
%%%%%%%%%%%%%--PREAMBLE--%%%%%%%%%%%%%%%%%%
\usepackage{amsmath,amsfonts,amssymb}
\usepackage{xcolor}
\usepackage[extra,safe]{tipa}
\usepackage[applemac]{inputenc}

\def \T{\mathbb{T}}

\def\esssup{{\rm esssup}}

\def\leftB{[\![}
\def\rightB{]\!]}
\def\I{{\rm{I\!\!I}}}

\newcommand \A[1]{{\bf (#1)}}

\def\F{{\cal F}}

\def\R{{\mathbb{R}} }
\def\N{{\mathbb{N}} }
\def\E{{\mathbb{E}}  }

\def\P{{\mathbb{P}}  }

\def\I{{\mathbb{I}}}
\def\det{{\rm{det}}}

\def\bint#1^#2{\displaystyle{\int_{#1}^{#2}}}
\def\bsum#1^#2{\displaystyle{\sum_{#1}^{#2}}}

\def\xdt_#1{X_#1(\Delta t)}

\def\Hess{{\rm Hess}}

\newtheorem{THM}{Theorem}[section]

\newtheorem{PROP}{Proposition}[section]

\newtheorem{COROL}{Corollary}[section]

\newtheorem{LEMME}{Lemma}[section]
\newtheorem{REM}{Remark}[section]

\newcommand{\mysection}{\setcounter{equation}{0} \section}

%	 		fonction trace
%	 	fonction signe
%	 	variance
%	 	correlation
%	 	covariance
% 			identite
%\DeclareMathOperator{\supp}{supp}% 		support
\DeclareMathOperator{\ent}{Ent}%		entropie
% 		operateur carre du champ

\renewcommand{\d}[1]{\textup{d} #1}% 	le dx de l'integrale
%		le dx stratonovich
%				la derivee partielle

\newcommand{\delimleft}[2]{\ifcase #1\or%	delimiteur gauche
    \bigl#2\or %
    \Bigl#2\or %
    \biggl#2\or %
    \Biggl#2\or %
    \left#2\fi}
\newcommand{\delimright}[2]{\ifcase #1\or%	delimiteur droite
    \bigr#2\or %
    \Bigr#2\or %
    \biggr#2\or %
    \Biggr#2\or %
    \right#2\fi}
\newcommand{\pa}[2][5]{% 			parentheses
    \delimleft{#1}{(} #2 \delimright{#1}{)}}
\newcommand{\br}[2][5]{%	 		crochets
    \delimleft{#1}{[} #2 \delimright{#1}{]}}
\newcommand{\ac}[2][5]{%			accolades
    \delimleft{#1}{\{} #2 \delimright{#1}{\}}}

\newcommand{\psca}[2][5]{%			produit scalaire
    {\delimleft{#1}{\langle} #2%
    \delimright{#1}{\rangle}}}
\newcommand{\abs}[2][5]{%			valeur absolue
    {\delimleft{#1}{\lvert} #2%
    \delimright{#1}{\rvert}}}

% 	negligeable

\newcommand{\prob}[2][5]{%			probabilite
    \P \! \br[#1]{#2}}
\newcommand{\probs}[3][5]{%			proba sous
    \P_{#2} \! \br[#1]{#3}}
\newcommand{\probc}[3][5]{%			proba conditionnelle
    \P \! \br[#1]{#2%
    \delimleft{#1}{.}%
    \vphantom{#2}\vphantom{#3}%
    \delimright{#1}{\vert} #3}}
\newcommand{\esp}[2][5]{%			esperance
    \E \! \br[#1]{#2}}
\newcommand{\esps}[3][5]{%			esperance sous
    \E_{#2} \!\br[#1]{#3}}
\newcommand{\espc}[3][5]{%			esperance conditionnelle
    \E \! \br[#1]{#2%
    \delimleft{#1}{.} \vphantom{#2}\vphantom{#3}%
    \delimright{#1}{\vert} #3}}

%						des raccourcis en vrac

%\newcommand{\ie}{\textit{i.e. }}%		i.e.
\renewcommand{\le}{\leqslant}% 			<= plus beau
\renewcommand{\ge}{\geqslant}%			>= plus beau
\newcommand{\ds}{\displaystyle}%		displaystyle
%			textstyle

%%%%%%%%%%%%%%%--BODY--%%%%%%%%%%%%%%%%%%
\begin{document}
%%-----------------------------
%%      the top matter
%%-----------------------------
\title{On some Non Asymptotic Bounds for the Euler Scheme}% of some Diffusion Processes}% At most 5 thanks
\author{V. Lemaire}\address{LPMA, Universit\'e Pierre et Marie Curie, 175 Rue du Chevaleret 75013 Paris, vincent.lemaire@upmc.fr}
\author{S. Menozzi}\address{LPMA, Universit\'e Denis Diderot, 175 Rue du Chevaleret 75013 Paris, menozzi@math.jussieu.fr}

\date{\today}
\begin{abstract} We obtain non asymptotic bounds for the Monte Carlo algorithm associated to the Euler discretization of some diffusion processes.
 The key tool is the Gaussian concentration satisfied by the density of the discretization scheme. This Gaussian concentration is derived from a Gaussian upper bound of the density of the scheme  and a modification of the so-called ``Herbst argument'' used to prove Logarithmic Sobolev inequalities.  We eventually establish a Gaussian lower bound for the density of the scheme that emphasizes the concentration is sharp. 
\end{abstract}
\subjclass{60H35,65C30,65C05, 60E15}
\keywords{Non asymptotic Monte Carlo bounds, Discretization schemes, Gaussian concentration}
\maketitle

\section{Introduction}
\label{INTRO}
\subsection{Statement of the problem}
Let the $\R^d $-valued process $(X_t)_{t\ge 0}$ satisfy the dynamics
\begin{equation}
\label{SDE}
X_t=x+\bint{0}^{t} b(s,X_s) \d s+\bint{0}^t B\sigma(s,X_s) \d W_s,
\end{equation}
where $(W_t)_{t\ge 0} $ is a $d'$-dimensional ($d'\le d$) standard Brownian motion defined on a filtered probability space $(\Omega,\F,$ $(\F_t)_{t\ge 0},\P) $ satisfying the usual assumptions. The matrix $B=\left(\begin{array}{c}\mathbf{I_{d'\times d'}}\\ \mathbf{0_{(d-d')\times d'}}\end{array} \right) $ is the embedding matrix from $\R^{d'} $ into $\R^d $. The coefficients $b:\R^+\times\R^d\rightarrow \R^d,\sigma:\R^+\times \R^d\rightarrow \R^{d'}\otimes  \R^{d'}$ are assumed to be Lipschitz continuous in space, $1/2 $-H\"older continuous in time so that there exists a unique strong solution to \eqref{SDE}. 

Let us fix $T>0$ and introduce for $(t,x)\in [0,T]\times \R^d $, $Q(t,x):=\E[f(T,X_T^{t,x})]$, where $f$ is a measurable function, bounded in time and with polynomial growth in space.  The numerical approximation of $Q(t,x)$ appears in many applicative fields. In mathematical finance, $Q(t,x)$ can be related to the price of an option when the underlying asset follows the dynamics \eqref{SDE}. In this framework we consider two important cases:
\begin{trivlist}
\item[(a)] If $d=d'$, $Q(t,x)$ corresponds to the price at time $t$ when $X_t=x$ of the vanilla option with maturity $T$ and pay-off $f$.
%\item[(b)] If $d=d',\ \tau^{t,x}:=\inf\{s\ge t,\ X_s^{t,x}\not \in D \}$, where $D$ is a domain of $\R^d$, $Q(t,x)$ corresponds to the price of a barrier option. 
\item[(b)] If $d'=d/2$, $b(x)=\left(\begin{array}{c}b_1(x)\\b_2(x)\end{array}\right)$ where $b_1(x)\in \R^{d'}, b_2(x)=(x_1,\cdots,x_{d'})^*$, $Q(t,x)$ corresponds to the price of an Asian option.
\end{trivlist}
It is also well known, see e.g. Friedman \cite{frie:75}, that $Q(t,x)$ is the Feynman-Kac representation of the solution of the parabolic PDE 
\begin{equation}
\label{EDP}
\left\{\begin{array}{l}
\partial_t Q(t,x)+LQ(t,x)=0,\ (t,x)\in [0,T) \times \R^d,\\
Q(T,x)=f(T,x), \ x \in \R^d, %\in \partial {\cal D},
\end{array}
\right.
\end{equation}
where $L$ stands for the infinitesimal generator of \eqref{SDE}. %, $D=\R^d,\ \partial {\cal D}=\{T\}\times \R^d $ in cases (a) and (c), $\partial {\cal D }= \{[0,T)\times \partial D\} \cup \{\{T\}\times \bar D\} $ in case (b).
Hence, the quantity $Q(t,x) $ can also be related to problems of heat diffusion with Cauchy boundary conditions (case (a)) or to kinetic systems (case (b)).  

The natural probabilistic approximation of $Q(t,x) $ consists in considering the Monte Carlo algorithm. This approach is particularly relevant compared to deterministic methods if the dimension $d$ is large.
To this end we introduce some discretization schemes. For case (a) we consider the Euler scheme with time step $\Delta:=T/N,\ N\in \N^*$. Set $\forall i\in \N,\ t_i=i\Delta $ and for $t\ge 0$, define $\phi(t)=t_i $ for $t_i\le t<t_{i+1} $. The Euler scheme writes
\begin{equation}
\label{EUL}
X_t^\Delta=x+\bint{0}^{t} b(\phi(s), X_{\phi(s)}^\Delta) \d s +\bint{0}^{t} \sigma(\phi(s), X_{\phi(s)}^\Delta) \d W_s.
\end{equation}
For case (b) we define 
\begin{equation}
\label{EUL_MODIF}
 X_t^\Delta=x+\bint{0}^{t} \left(\begin{array}{c} b_1(\phi(s),   X_{\phi(s)}^\Delta)\\ (  X_s^\Delta)^{1,d'}\end{array} \right) \d s +\bint{0}^{t} %\left(\begin{array}{c}\sigma(\phi(s),   X_{\phi(s)}^\Delta)\\ {\mathbf 0}_{\mathbf{d'}\times \mathbf{d'}}\end{array}\right) 
 B\sigma(\phi(s),   X_{\phi(s)}^\Delta)\d W_s,
\end{equation}
where $(  X_s^\Delta)^{1,d'}:=\bigl((  X_s^\Delta)^1,\cdots ,(  X_s^\Delta)^{d'}\bigr)^*$. Equation \eqref{EUL_MODIF} defines a completely simulatable scheme with Gaussian increments. On every time step, the last $d'$ components are the integral of a Gaussian process. 

The weak error for the above problems has been widely investigated in  the literature. Under suitable assumptions on the coefficients $b,\sigma $ and $f$ (namely smoothness)  it is shown in Talay and Tubaro \cite{tala:tuba:90} that
$E_D(\Delta):=\E_x[f(T,X_T^\Delta)]-\E_x[f(T,X_T)]=C\Delta +O(\Delta^2)$. Bally and Talay \cite{ball:tala:96:1} then extended this result to the case of bounded measurable functions $f$ in a  hypoelliptic setting for time homogeneous coefficients $b,\sigma$.
Also, still for time homogeneous coefficients, similar expansions have been derived for the difference of the densities of the process and the discretization scheme, see Konakov and Mammen \cite{kona:mamm:02} in case (a), Konakov \textit{et al.} \cite{kona:meno:molc:09} in case (b) for a uniformly elliptic diffusion coefficient $\sigma\sigma^*$, and eventually Bally and Talay \cite{ball:tala:96:2} for a hypoelliptic diffusion and a slight modification of the Euler scheme.
 The constant $C$ in the above development involves the derivatives of $Q$ and therefore depends on $f,b,\sigma,x$. 
 
 The expansion of $E_D(\Delta) $ gives a good control on the impact of the discretization procedure of the initial diffusion, and also permits to improve the convergence rate using e.g. Richardson-Romberg extrapolation (see \cite{tala:tuba:90}). 
Anyhow, to have a global sharp control of the numerical procedure it remains to consider the quantities 
\begin{equation}
\label{EST_MC}
E_{MC}(M,\Delta) = \ds \frac 1M\bsum{i=1}^{M} f(T,(X_{T}^\Delta)^i)-\esps{x}{f(T,X_T^\Delta)}. 
\end{equation}
In the previous quantities $M$ stands for the number of independent samples in the Monte Carlo algorithm and $\bigl( (X_t^\Delta)_{t\ge 0}^i \bigr )_{i\in\leftB 1,M\rightB}$ are independent sample paths. Indeed, the global error associated to the Monte Carlo algorithm writes:
$$E(M,\Delta)=E_{D}(\Delta)+E_{MC}(M,\Delta),$$%\ l\in\{(a),(b)\},$$
where $E_{D}(\Delta) $ is the \textit{discretization error} and $E_{MC}(M,\Delta) $ is the pure \textit{Monte Carlo} error.

The convergence of $E_{MC}(M,\Delta)$, to $0$ when $M\rightarrow \infty$ is ensured under the above assumptions on $f$ by the strong law of large numbers. A speed of convergence can also be derived from the central limit theorem, but these results are asymptotic, i.e. they hold for a sufficiently large $M$. On the other hand, a non asymptotic result is provided by the Berry-Esseen Theorem that compares the distribution function of the normalized Monte Carlo error to the distribution function of the normal law at order $O(M^{-1/2})$.

In the current work we are interested in giving, for Lipschitz continuous in space functions $f$, non asymptotic error bounds for the quantity $E_{MC}(M,\Delta)$. 
%That is, for a fixed $M$ and a given $r>0$, we give a precise control on $\P[|E_{MC}(M,\Delta)|>r] $ uniformly in $\Delta=T/N,\ N\ge 1$. This allows to obtain non asymptotic confidence intervals and therefore a sharp control of the whole error $E(M,\Delta)$ of the Monte Carlo procedure.
Similar issues had previously been studied by Malrieu and Talay \cite{malr:tala:06}. In that work, the authors investigated the concentration properties of the Euler scheme and obtained Logarithmic Sobolev inequalities, that imply Gaussian concentration see e.g. Ledoux \cite{ledo:99}, for multi-dimensional Euler schemes with constant diffusion coefficients. 
Their goal was in some sense different than ours since they were mainly interested in ergodic simulations. In that framework we also mention the recent work of Joulin and Ollivier for Markov chains \cite{joul:olli:09}.

Our  strategy is here different. We are interested in the approximation of $Q(t,x),\ t\le T$ where $T>0$ is fixed. It turns out that the log-Sobolev machinery is in some sense too rigid and too ergodic oriented. Also, as far as approximation schemes are concerned it seems really difficult to obtain log-Sobolev inequalities in dimension greater or equal than two without the constant diffusion assumption, see \cite{malr:tala:06}. Anyhow, under suitable assumptions on $b,\sigma$ (namely uniform ellipticity of $\sigma\sigma^* $ and mild space regularity), the discretization schemes \eqref{EUL}, \eqref{EUL_MODIF} can be shown to have  a density admitting a Gaussian upper bound. From this \textit{a priori} control we can modify Herbst's argument to obtain an expected Gaussian concentration as well as the tensorization property (see \cite{ledo:99}) that will yield for $r>0$ and a Lipschitz continuous in space $f$, $\prob[1]{|E_{MC}(M,\Delta)|\ge r+\delta} \le 2 \exp(-\frac{M}{\alpha(T)} r^2)$ for $\alpha(T) > 0$ independent of $M$ uniformly in $\Delta=T/N$. Here $\delta \ge 0$ is a bias term (independent of $M$) depending on the constants appearing in the Gaussian domination (see Theorem \ref{Aronson_Euler}) and on the Wasserstein distance between the law of the discretization scheme and the Gaussian upper bound. We also prove that a Gaussian lower bound holds true for the density of the scheme. Hence, the Gaussian concentration is sharp, i.e. for a function $f$ with suitable non vanishing behavior at infinity, the concentration is at most Gaussian, i.e. $\prob[1]{|E_{MC}(M,\Delta)| \ge r - \bar \delta} \ge 2\exp(-\frac M{\bar \alpha(T)} r^2)$, for $r$ large enough, $\bar \delta$ depending on $f$, and the Gaussian upper and lower bounds, $\bar \alpha(T) >0$ independent of $M$ uniformly in $\Delta=T/N$.

The paper is organized as follows, we first give our standing assumptions and some notations in Section \ref{ASS_AND_NOT}. We state our main results in Section \ref{RES}. Section \ref{CONC} is dedicated to concentration properties and non asymptotic Monte Carlo bounds for random variables whose law admits a density dominated by a probability density satisfying a log-Sobolev inequality.
We prove our main deviations results at the end of that section as well.  
In Section \ref{PARAM} we show how to obtain the previously mentioned Gaussian bounds in the two cases introduced above. The main tool for the upper  bound is a discrete parametrix representation of Mc Kean-Singer type for the density of the scheme, see \cite{mcke:sing:67} and Konakov and Mammen \cite{kona:mamm:00} or \cite{kona:mamm:02}. The lower bound is then derived through suitable chaining arguments adapted to our non Markovian setting.

\subsection{Assumptions and Notations}
 \label{ASS_AND_NOT}
We first specify some assumptions on the coefficients. Namely, we assume:
\begin{trivlist}
\item[\A{UE}] The diffusion coefficient is uniformly elliptic. There exists $\lambda_0>0$ s.t. for $(t,x,\xi)\in [0,T]\times \R^d\times \R^{d'}$ we have $\lambda_0^{-1}|\xi|^2\le  \langle a(t,x)\xi,\xi\rangle \le \lambda_0 |\xi|^2$ where $a(t,x):=\sigma\sigma^*(t,x)$, and $\abs{.}$ stands for the Euclidean norm.
\item[\A{SB}] The diffusion matrix $a$ is uniformly $\eta $-H\"older continuous in space, $\eta>0 $, uniformly in time, and the drift $b$ is bounded. That is there exists $L_0>0$ s.t.
$$\sup_{(t,x)\in [0,T]\times \R^d}|b(t,x)|+\sup_{t\in [0,T], (x,y)\in \R^{2d},\ x\neq y} \frac{|a(t,x)-a(t,y)|}{|x-y|^\eta}\le L_0.$$
\end{trivlist}
Throughout the paper we assume that \A{UE}, \A{SB} are in force.

In the following we will denote by $C$ a generic positive constant that can depend on $L_0,\lambda_0,\eta,d,T$. We reserve the notation $c$ for constant depending on $L_0,\lambda_0,\eta,d$ but not on $T$.
In particular the constants $c,C$ are uniform w.r.t the discretization parameter $\Delta=T/N$ and eventually the value of both $c,C$ may change from line to line. 

To establish concentration properties, we will work with the class of Lipschitz continuous functions $F:\R^d\rightarrow \R$ satisfying $\abs{\nabla F}_\infty = \esssup_x \abs{\nabla F(x)} \le 1$ where $\abs{\nabla F}$ denotes the Euclidean norm of the gradient $\nabla F$, defined almost everywhere, of $F$.

Denote now by $S^{d-1} $ the unit sphere of $\R^d$. For $z\in \R^d\backslash \{ 0\},\ \pi_{S^{d-1}}(z)$ stands for the uniquely defined projection on $S^{d-1}$.
For given $\rho_0 > 0$, $\beta > 0$, we introduce the following growth assumption in space for $F$ in the above class of functions: 
\begin{trivlist}
\item[$\A{G_{\rho_0, \beta} }$] There exists $A \subset S^{d-1}$ such that 
$$ \forall y\in \R^d\backslash B(\rho_0), \pi_{S^{d-1}}(y) \in A,\ y_0:=\rho_0 \pi_{S^{d-1}}(y),  F(y)-F(y_0) \ge \beta |y-y_0|, $$
\end{trivlist}
with $A$ of non empty interior and $\abs{A} \ge \varepsilon > 0$ for $d \ge 2$ ($\abs{.}$ standing here for the Lebesgue measure of $S^{d-1}$), and $A \subset \ac{-1,1}$ for $d=1$. In the above equation $B(\rho_0)$ stands for the Euclidean ball of $\R^d$ of radius $\rho_0 $, and $\psca{.,.}$ denotes the scalar product in $\R^d$.

\begin{REM}
%In dimension one, for $A=\{-1,1\}$, assumption $\A{G_{\rho_0, \beta}}$ simply means that for $|y|\ge \rho_0$ the absolute value of the derivative of $F$ is strictly greater than $\beta$. In higher dimension, it means that if $\pi_{S^{d-1}}(y_0) \in A$, the gradients of $F$ along the direction $y_0$ have lower bounded norm and stay in a cone with axis $y_0$ that is strictly separated from an orthogonal hyperplane to $y_0$, the angle of the cone being related to $\beta$.
The above assumption simply means that for $|y|\ge \rho_0$ the graph of $F$ stays above a given hyperplane. In particular, for all $z\in A, F(r z )\underset{r\rightarrow +\infty}{\rightarrow}+\infty$.
\end{REM}

The bounds of the quantities $E_{MC}(M,\Delta)$ will be established for real valued functions $f$ that are uniformly Lipschitz continuous in space and measurable bounded in time, such that for a fixed $T$, $F(.) := f(T,.)$ will be Lispchitz continuous satisfying $\abs{\nabla F}_\infty \le 1$. 
Moreover, for the lower bounds, %in cases $(a)$ and $(b)$, 
we will suppose that the above $F$ satisfies $\A{G_{\rho_0, \beta} }$.

\mysection{Results}
\label{RES}
Let us first justify that under the assumptions \A{UE}, \A{SB}, the discretization schemes admit a density. For all $x\in \R^d$, $0\le j<j'\le N $, $A\in {\cal B}(\R^d) $ (where ${\cal B}(\R^d)$ stands for the Borel $\sigma$-field of $\R^d$) we get 
\begin{multline}
\label{EXPR_DENS}
\probc[1]{X_{t_{j'}}^\Delta \in A}{X_{t_j}^\Delta=x}
= \bint{(\R^d)^{j'-j-1}\times A}^{} p^{\Delta}(t_j,t_{j+1}, x, x_{j+1})p^{\Delta}(t_{j+1},t_{j+2},x_{j+1},x_{j+2})\cdots p^{\Delta}(t_{j'-1},t_{j'},x_{j'-1},x_{j'})\\
\times \d x_{j+1}\d x_{j+2}\cdots \d x_{j'},
\end{multline}
where the notation $p^{\Delta}(t_i,t_{i+1},x_i,x_{i+1}), \ i\in \leftB 0,N-1\rightB $ stands in case (a) for the density at point $x_{i+1}$ of a Gaussian random variable with mean $x_i+b(t_i,x_i)\Delta$ and non degenerated covariance matrix $a(t_i,x_i) \Delta$, whereas in case (b) it stands for the density of a Gaussian random variable with mean $\left( \begin{array}{c}
x_i^{1,d'} + b_1(t_i,x_i)\Delta ,\\
 x_i^{d'+1,d} + x_i^{1,d'}\Delta+b_1(t_i,x_i)\Delta^2/2 
\end{array}\right) $ and non degenerated as well covariance matrix $\left(\begin{array}{ll}a(t_i,x_i) \Delta & a(t_i,x_i) \Delta^2/2\\a(t_i,x_i) \Delta^2/2 & a(t_i,x_i) \Delta^3/3 \end{array}\right)$, where $\forall y \in \R^d$, $y^{1,d'} = (y^1,\dots,y^{d'})^*$ and $y^{d'+1,d} = (y^{d'+1},\dots,y^{d})^*$.  

Equation \eqref{EXPR_DENS} therefore guarantees the existence of the density for the discretization schemes.
From now on, we denote by $p^{\Delta}(t_j,t_{j'},x,\cdot)$ the transition densities between times $t_j$ and $t_{j'},\ 0\le j<j'\le N$, of the discretization schemes \eqref{EUL}, \eqref{EUL_MODIF}. Let us denote by $\P_x$ (resp. $\P_{t_j,x},\ 0\le j<N $) the conditional probability given $\ac{X_0^\Delta = x}$ (resp. $\{X_{t_j}^\Delta=x\}$), so that in particular $\probs{x}{X_T^\Delta \in A} = \int_A p^\Delta(0,T, x, x') \d x'$. We have the following Gaussian estimates for the densities of the schemes.
\begin{THM}[``Aronson'' Gaussian estimates for the discrete Euler scheme]
\label{Aronson_Euler}
Assume \A{UE}, \A{SB}. There exist constants $c>0,C \ge 1$, s.t. for every $0\le j<{j'}\le N $:
\begin{equation}
\label{Aronson_E}
 C^{-1}  p_{c^{-1}}(t_{j'}-t_j,x,x')\le {p}^{\Delta}(t_j,t_{j'},x,x')\le C  p_c(t_{j'}-t_j,x,x'),
\end{equation}
where for all $0\le s<t\le T $, in case (a), $p_c(t-s,x,x'):=\left(\frac{c}{2\pi (t-s) }\right)^{d/2}\exp\bigl(-c\frac{|x'-x|^2}{2(t-s)}\bigr) $ and in case (b)
$$  p_c(t-s,x,x'):= \pa{\frac{\sqrt{3} c}{2\pi (t-s)^{2}}}^{d/2}
\exp\biggl(-c\biggl\{\frac{|(x')^{1,d'}-x^{1,d'}|^2}{4(t-s)}+3\frac{|(x')^{d'+1,d}-x^{d'+1,d}-\frac{x^{1,d'}+(x')^{1,d'}} 2(t-s)|^2}{(t-s)^3}\biggr\}\biggr).$$

Note that $p_c$ enjoys the semigroup property, i.e. $\forall 0<s<t,\ \int_{\R^d}^{}   p_c(t-s, x,u)   p_c(s,u,x')\d u=  p_c(t,x,x')$ (see Kolmogorov \cite{kolm:33} or \cite{kona:meno:molc:09} for case (b)). %Moreover, we also have the following lower bound:
%\begin{equation}
%\label{A_LB}
%C^{-1}  p_{c^{-1}}(t_{j'}-t_j,x,x')\le {p}^{\Delta}(t_j,t_{j'},x,x').
%\end{equation}
\end{THM}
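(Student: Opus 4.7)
The plan is to prove the upper and lower bounds separately, using a discrete McKean--Singer parametrix for the upper bound and a chaining argument for the lower bound.

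For the upper bound, I would introduce a Gaussian ``proxy'' kernel $\tilde p^{\Delta}(t_j,t_{j'},x,x')$ obtained by freezing the diffusion coefficient $a$ at the terminal point $x'$ across all intermediate steps of \eqref{EUL} or \eqref{EUL_MODIF}. Thanks to \A{UE}, this proxy has exactly the Gaussian form claimed in \eqref{Aronson_E}, up to harmless constants. The true density $p^{\Delta}$ and the proxy are related by a discrete Duhamel identity obtained by telescoping across the $j'-j$ Euler steps, which yields a convergent parametrix expansion $p^{\Delta}=\sum_{r\ge 0}\tilde p^{\Delta}\otimes H^{\otimes r}$, where the one-step kernel $H$ is the difference between the true and frozen transition densities. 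The H\"older regularity of $a$ provided by \A{SB} gives a gain of order $|x_i-x'|^{\eta}$ at each occurrence of $H$, which when integrated against the Gaussian proxy produces a smoothing factor of order $(t_{j'}-t_i)^{\eta/2}$. Summing the series then yields the Gaussian upper bound with constants uniform in $\Delta$.

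For the lower bound, I would use a chaining scheme. Fix a number $K$ of intermediate times (depending only on constants, not on $\Delta$) and place at each intermediate time a small ball around the corresponding point of the natural path connecting $x$ to $x'$ (straight segment in case (a), the kinetic curve induced by the degenerate structure in case (b)). Using representation \eqref{EXPR_DENS} restricted to trajectories passing through this tube, each one-step transition density is bounded from below by its explicit Gaussian form thanks to the local non-degeneracy from \A{UE}. Choosing the tube radii to match the intrinsic Gaussian scaling at each level and integrating over the tube reproduces the kernel $p_{c^{-1}}$ of the statement.

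The main obstacle lies in case (b). The proxy kernel then exhibits mixed scaling (of order $\Delta$ on the diffusive components versus $\Delta^{3}$ on the integrated ones), which complicates both the parametrix algebra --- integrating two degenerate Gaussians demands the correct covariance arithmetic in order to recover the kinetic semigroup identity mentioned after \eqref{Aronson_E} --- and the chaining, since the last $d'$ components of $X^{\Delta}$ depend on the whole Brownian path over a single step. Consequently the scheme is non-Markovian at sub-step scale, and the chain has to be anchored exclusively at grid times $t_j$ with tube radii respecting the kinetic scaling. Overcoming this essentially amounts to transposing the continuous computations of Konakov--Menozzi--Molchanov \cite{kona:meno:molc:09} to the discrete non-Markovian setting while preserving uniformity in $\Delta=T/N$.
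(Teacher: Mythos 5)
Your plan matches the paper's strategy in all essentials: the upper bound via a discrete parametrix expansion $p^{\Delta}=\sum_{r}\widetilde p^{\Delta}\otimes_\Delta H^{\Delta,(r)}$ built from the scheme with coefficients frozen at the terminal point $x'$, with the H\"older regularity of $a$ producing an integrable $(t_{j'}-t_j)^{-1+\eta/2}$ singularity in the kernel $H^{\Delta}$ so that the series converges with uniform constants; and the lower bound first on compact sets of the intrinsic metric from the same expansion, then globally by chaining along the Euclidean geodesic in case (a) and the optimal trajectory of the deterministic control problem in case (b). You also correctly flag the two real difficulties: the kinetic covariance arithmetic and the sub-step non-Markovianity of $X^{\Delta}$.

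One detail in your chaining plan is not quite right and would actually break. You propose anchoring the chain \emph{exclusively at grid times} to sidestep the non-Markovianity. The paper does the opposite: the chain times $s_i=t_j+ih$ with $h=(t_{j'}-t_j)/L$ and $L\asymp d^2_{t_{j'}-t_j}(x,x')$ are in general \emph{not} grid points, and this is essential --- the number of links must scale with the squared distance, not with $N$, otherwise iterating the one-step Gaussian bound over $j'-j$ grid steps produces constants that decay geometrically in $N$ and cannot be uniform in $\Delta$. The paper instead proves a lower bound $p^{\Delta,y}(s,t,x,x')\ge c_0^{-1}(t-s)^{-\mathbf{S}}$ for \emph{arbitrary} $0\le s<t\le T$ by inserting at most two grid points between $s$ and $t$, with the density conditioned also on $X^{\Delta}_{\phi(s)}$; the non-Markovianity is then handled in the chaining by conditioning on $\mathcal F_{s_{I_{k}^{\sharp I_k}}}$ when several $s_i$ fall in the same grid interval ($h<\Delta$). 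If you restricted to grid times you would lose the freedom to choose $L$ according to $d^2(x,x')$, and the constant in the lower bound would not be uniform in $N$.
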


\begin{REM}
The above upper bound can be found in \cite{kona:mamm:02} in the case of time homogeneous Lipschitz continuous coefficients.
Both bounds can be derived (for time dependent coefficients) from the work of Gobet and Labart \cite{gobe:laba:08} under stronger smoothness assumptions. Here, our framework is the one of the
``standard" PDE assumptions to derive Aronson's estimates for the fundamental solution of non degenerated non-divergence form second order operators, see e.g. Sheu \cite{sheu:91} or \cite{dela:meno:09}. In particular no regularity in time is needed.
\end{REM}

Our second result is the Gaussian concentration of the Monte Carlo error $E_{MC}(M, \Delta)$ defined in \eqref{EST_MC} for a fixed $M$ uniformly in $\Delta=T/N, \ N\ge 1$.

\begin{THM}[Gaussian concentration] \label{MTHM}
Assume \A{UE}, \A{SB}. For the constants $c$ and $C$ of Theorem \ref{Aronson_Euler}, we have for every $\Delta =T/N,\ N\ge 1$, and every Lipschitz continuous function in space and measurable bounded in time $f:\R^d\rightarrow \R $ satisfying $\abs{\nabla f(T,.)}_\infty \le 1$ in \eqref{EST_MC},  
\begin{equation} \label{MAJO-THM}
	\forall r > 0, \quad \forall M \ge 1, \quad \probs{x}{\abs[1]{E_{MC}(M, \Delta)} \ge r + \delta_{C, \alpha(T)}} \le 2 e^{- \frac{M}{\alpha(T)} r^2}, 
\end{equation}
with 
\begin{equation} \label{DEF_ALPHA_T}
	\frac{1}{\alpha(T)} = \begin{cases}
		\frac{c}{2T} & \text{in case (a)}, \\
		\frac{c}{2T}\pa{1 + \frac{3}{T^2}\pa[2]{1-\sqrt{1+\frac{T^2}{3} + \frac{T^4}{9}}}} & \text{in case (b)}, \\
	\end{cases}
\end{equation}
and $\delta_{C, \alpha(T)} = 2 \sqrt{\alpha(T) \log C}$.

Moreover, if $F(.):=f(T,.) \ge 0$ satisfies for a given $\rho_0>0$ and $\beta > 0$, the growth assumption $\A{G_{\rho_0, \beta}}$,
%there exists $\bar c:=\bar c(\rho_0,A,d) > 0$ (where $A\subset S^{d-1}$ appears in $\A{G_{\rho_0, \beta}}$) such that 
\begin{equation} \label{MINO-THM}
	\forall r > 0, \quad \forall M \ge 1, \quad \probs{x}{\abs[1]{E_{MC}(M, \Delta)} \ge r - \bar \delta_{c,C,T, f}} \ge 2 \exp\pa{- \frac M {\bar \alpha(T)} \left[\frac r\beta\vee \rho_0  \right]^2},
\end{equation}
where $\bar \delta_{c,C, T, f} = (1 + \sqrt{2}) \sqrt{\alpha(T) \log C} + \gamma_{c^{-1},T}(F) + \rho_0\beta-\underline{F},\ \gamma_{c^{-1},T}(\d x')=p_{c^{-1}}(T,x,x')\d x'$, and $ \underline{F}:=\inf_{s\in S^{d-1}}F(s\rho_0)$. The constant $\bar \alpha(T)^{-1} $ appearing in \eqref{MINO-THM} writes  in case (a)
\begin{eqnarray*} %\label{DEF_ALPHA_T}
	 \frac{1}{\bar \alpha(T)}&=&\bar \Lambda+\chi:=	\begin{cases}
 \frac {c^{-1}}{2T}+\frac{1}{\rho^2_0}\log\pa{\frac{\pi^{d/2}   C  }{K(d,A)                }}_+  & \text{for $d $ even},\\
  \frac{c^{-1}\theta}{2T}+ \frac{1}{\rho_0^2}\log\pa{\frac{\pi^{d/2} C}{ \arccos(\theta^{-1/2}) K(d,A)}}_+& \text{for $d $ odd, $\theta \in (1,+\infty)$},
  \end{cases}
\end{eqnarray*}
 %	\chi&=&\begin{cases}
%	\begin{cases}
%	\frac{1}{\rho^2_0}\log\pa{\frac{\pi^{d/2}   C  }{K(d,A)                }}_+  & \text{for $d $ even}, \\
%	\frac{1}{\rho_0^2}\log\pa{\frac{\pi^{d/2} C}{ \arccos(\theta^{-1/2}) K(d,A)}} & \text{for $d$ odd, $\theta \in (1,+\infty)$},
%	 \end{cases} & \text{in case (a)},\\
 %\end{eqnarray*}
where for all $d \in \N^*$, $A\subset S^{d-1}$ appearing in $\A{G_{\rho_0, \beta}}$,
\begin{equation} \label{K_d_A}
K(d,A)=\begin{cases} \frac{|A| (d/2-1)!}{2},\ d \text{ even},\\
\frac{|A|\prod_{j=1}^{\frac{d-1}2}(j-1/2)}{\pi^{1/2}},\ d \text{ odd}.
\end{cases} 
\end{equation}
In case (b), $d$ is even and 
\begin{eqnarray*}
\frac 1{\bar \alpha(T)}=\bar \Lambda+\chi:=\frac{c^{-1}}{2T}\left(1+\frac 3 {T^2}\left[1+\sqrt {1+\frac{T^2}3+\frac {T^4} 9}  \right] \right)+\frac{1}{\rho^2_0}\log\pa{\frac{ \left( \frac \pi T  \right)^{d/2}[T^2+3(1+\sqrt {1+\frac{T^2}3+\frac {T^4} 9})]^{d/2}  C  }{K(d,A)                }}_+.
\end{eqnarray*}
\end{THM}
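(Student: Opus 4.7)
The plan is to leverage the Gaussian sandwich from Theorem~\ref{Aronson_Euler} and transfer the classical concentration and anti-concentration of the dominating Gaussians to the discretization law.

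\medskip\noindent\textbf{Upper bound.} Denote by $\mu$ the law of $X_T^\Delta$ under $\P_x$ and by $\gamma(\d y):=p_c(T,x,y)\,\d y$ the upper Gaussian envelope, so $d\mu/d\gamma\le C$. The first step is to identify the log--Sobolev constant $\alpha_{LS}$ of $\gamma$ with the operator norm of its covariance: in case~(a) this is $T/c$; in case~(b), read the quadratic form $Q$ off the exponent of $p_c$, observe $\Sigma^{-1}=2c\,M_Q$, and diagonalize the $2\times 2$ block to obtain $\lambda_{\max}(\Sigma)=\frac T{2c}[1+\frac{T^2}3+\sqrt{1+\frac{T^2}3+\frac{T^4}9}]$. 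In both cases $\alpha_{LS}=\alpha(T)/2$. Herbst's inequality for $\gamma$ combined with the domination gives, for any $1$-Lipschitz $F$,
\[
\int e^{\lambda(F-\bar F_\gamma)}\,d\mu\le C\exp\!\pa{\lambda^2\alpha(T)/4},\qquad \bar F_\gamma:=\int F\,d\gamma.
\]
Tensorizing over the $M$ iid samples and applying Chernoff yields
\[
\P_x(E_{MC}\ge s)\le\exp\!\pa{M\log C-M(s+\theta)^2/\alpha(T)},\qquad \theta:=\bar F_\mu-\bar F_\gamma,
\]
whenever $s+\theta>0$. The bias is controlled through the Otto--Villani $T_2$ transport inequality for $\gamma$: since $\mathrm{KL}(\mu|\gamma)\le\log C$, one has $|\theta|\le W_1(\mu,\gamma)\le W_2(\mu,\gamma)\le\sqrt{2\alpha_{LS}\log C}=\sqrt{\alpha(T)\log C}$. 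Plugging $s=r+\delta_{C,\alpha(T)}$ with $\delta_{C,\alpha(T)}=2\sqrt{\alpha(T)\log C}$ gives $s+\theta\ge r+\sqrt{\alpha(T)\log C}$, hence $(s+\theta)^2\ge r^2+\alpha(T)\log C$, which exactly cancels the $M\log C$ term; summing the two symmetric tails produces \eqref{MAJO-THM}.

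\medskip\noindent\textbf{Lower bound.} For anti-concentration I combine the lower Aronson estimate $d\mu/d\gamma_{c^{-1}}\ge C^{-1}$ with the growth assumption $\A{G_{\rho_0,\beta}}$. For $R\ge\rho_0$ set $\mathcal{R}_R:=\{y:\pi_{S^{d-1}}(y)\in A,\ |y|\ge R\}$; on $\mathcal{R}_R$, $\A{G_{\rho_0,\beta}}$ yields $F\ge\underline F+\beta(R-\rho_0)$. On the event $\Omega_R:=\bigcap_{i=1}^M\{X_T^{\Delta,i}\in\mathcal{R}_R\}$ the empirical mean therefore satisfies $G\ge\underline F+\beta(R-\rho_0)$, and by independence $\P_x(\Omega_R)\ge[C^{-1}\gamma_{c^{-1}}(\mathcal{R}_R)]^M$. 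Passing to polar coordinates and absorbing the centering at $x$ into polynomial corrections, $\gamma_{c^{-1}}(\mathcal{R}_R)$ reduces to $(|A|/(2\pi^{d/2}))\,\Gamma(d/2,R^2/(2Tc))$ in case~(a). A lower bound on this incomplete Gamma function --- elementary when $d$ is even via $\Gamma(d/2,y)\ge(d/2-1)!\,e^{-y}$, and obtained through the splitting $e^{-t}=e^{-t/\theta}e^{-t(1-1/\theta)}$ together with an $\arccos(\theta^{-1/2})$-type spherical estimate when $d$ is odd --- produces the rate $\bar\Lambda$ and the constant $K(d,A)/\pi^{d/2}$ as in \eqref{K_d_A}; case~(b) runs analogously once the $\gamma_{c^{-1}}$ covariance is diagonalized, yielding the stated $\bar\Lambda$. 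Setting $R=(r/\beta)\vee\rho_0$, the $C^{-M}$ prefactor and normalization rearrange into $\exp(-M\chi\rho_0^2)$ with $\chi=(1/\rho_0^2)\log(\pi^{d/2}C/K(d,A))_+$, giving the announced $\bar\Lambda+\chi=1/\bar\alpha(T)$. The additive offsets inside $\bar\delta_{c,C,T,f}$, namely $(1+\sqrt 2)\sqrt{\alpha(T)\log C}+\gamma_{c^{-1},T}(F)+\rho_0\beta-\underline F$, absorb respectively a Wasserstein-type control of $\bar F_\mu$ against $\gamma_{c^{-1},T}(F)$ (via $T_2$ for $\gamma_{c^{-1}}$ together with the two-sided Aronson sandwich) and the purely deterministic shift between $\underline F+\beta(R-\rho_0)$ and $r-\bar\delta_{c,C,T,f}+\bar F_\mu$.

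\medskip\noindent\textbf{Main obstacle.} The delicate step is the sharp lower bound on the Gaussian cone integral $\gamma_{c^{-1}}(\mathcal{R}_R)$: matching $K(d,A)$ from \eqref{K_d_A} with the leading coefficient of the polynomial expansion of $\Gamma(d/2,y)$ and, in odd dimension, implementing the $\theta>1$ truncation to dominate the half-integer tail (the $\arccos(\theta^{-1/2})$ factor reflects a spherical geometry step rather than mere bookkeeping) all require care. Once the LSI constants of $\gamma_c$ and $\gamma_{c^{-1}}$ have been explicitly identified, everything else --- Herbst, Chernoff, tensorization, and the $T_2$/$W_1$ transport bias --- proceeds by standard routes.
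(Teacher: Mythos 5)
Your proposal follows essentially the same route as the paper: on the upper-bound side, identify the Gaussian envelope $\gamma=p_c(T,x,\cdot)\,\d x'$, compute its log-Sobolev constant via Bakry--\'Emery (equivalently the largest covariance eigenvalue), run Herbst plus domination and Chernoff after tensorization, and absorb the $\mu$-vs-$\gamma$ bias through a $W_1$ bound; on the lower-bound side, restrict to the cone $\mathcal R_R$ with $R=(r/\beta)\vee\rho_0$, use $\A{G_{\rho_0,\beta}}$ and the lower Gaussian estimate, and evaluate the polar-coordinate tail integral to land on $K(d,A)$, $\bar\Lambda$ and $\chi$. The one point where you diverge is in controlling the bias: you invoke Otto--Villani's $T_2$ together with $W_1\le W_2$, whereas the paper's Lemma~\ref{LEMW1} obtains the same bound $W_1(\mu,\gamma)\le\sqrt{\alpha(T)\log C}$ directly from the variational formulation of the entropy (the paper itself mentions the $T_2$ alternative in a remark); both give the identical $\delta_{C,\alpha(T)}$. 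One side computation is off by a factor of two: in case (b) the largest covariance eigenvalue of $\gamma_{c,T}$ is $\lambda_{\max}(\Sigma)=\frac{T}{c}\bigl(1+\frac{T^2}{3}+\sqrt{1+\frac{T^2}{3}+\frac{T^4}{9}}\bigr)$, not $\frac{T}{2c}(\cdots)$, although this does not propagate because you then fix $\alpha_{LS}=\alpha(T)/2$, which pins $\alpha(T)$ to its stated value.
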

%%%%%%% On commence par les remarques
%%%%%%% Et tout d'abord avec celle sur les regimes

From Theorem \ref{Aronson_Euler} and our current assumptions on $f$, we can deduce from the central limit theorem that $M^{1/2}E_{MC}(M,\Delta)\overset{({\rm law})}{\underset{M}{ \rightarrow}}{\cal N}(0,\sigma^2(f,\Delta)),\ \sigma^2(f,\Delta):=\E_x[f(X_T^\Delta)^2]-\E_x[f(X_T^\Delta)]^2$. From this asymptotic regime, we thus derive that for large $M$ the typical deviation rate $r$ (i.e. the size of the confidence interval) in \eqref{MAJO-THM} has order $c\sigma(f,\Delta)M^{-1/2}$ where for a given threshold $\alpha\in (0,1) $, $c:=c(\alpha)$ can be deduced from the inverse of the Gaussian distribution function. In other words, $r$ is typically small for large $M$. On the other hand, we have a systematic bias $\delta_{C,\alpha(T)} $, independently of $M$. In whole generality, this bias is inherent to the concentration arguments used to derive the above bounds, see Section \ref{CONC}, and cannot be avoided. Hence, those bounds turn out to be particularly relevant to derive non asymptotic confidence intervals when $r$ and $\delta_{C,\alpha(T)} $ have the same order. In particular, the parameter $M$ is not meant to go to infinity. This kind of result can be useful if for instance it is particularly heavy to simulate the underlying Euler scheme and that only a relatively small number $M$ of samples is reasonably allowed. On the other hand, the smaller $T$ is the bigger $M$ can be.  Precisely, one can prove that the constant $C$ of Theorem \ref{Aronson_Euler} is bounded by $\bar c \exp(L_0 T)$ (see Section \ref{PARAM}). Hence from \eqref{DEF_ALPHA_T}, we have $\delta_{C,\alpha(T)} = O(T)$ for $T$ small. 

%%%%%%%% A mettre en Section 3: cadre abstrait difference de normalisation, TCL versus mesure de reference.
%We also mention that when $r$ and $\delta_{C,\alpha(T)} $ have the same order, equations \eqref{MAJO-THM}, \eqref{MINO-THM} provide for ``small'' $M$ a sharper control than Berry Esseen's theorem that gives $\exists C>0, \forall r>0,\delta>0,\ \forall M\ge 1,\ |\P[\frac{M^{1/2}}{\sigma(f,\Delta)} E_{MC}(M,\Delta)\ge M^{1/2}(r+\delta)]-\P[{\cal N}(0,1)\ge M^{1/2}(r+\delta)]|\le CM^{-1/2}$.

\begin{REM}
For the lower bound, the ``expected" value for $\bar \alpha(T)^{-1}$ would be  $\bar \lambda$ corresponding to the largest eigenvalue of one half the inverse of the covariance matrix of the random variable with density $p_{c^{-1}}(T,x,.)$ appearing in the lower bound of Theorem \ref{Aronson_Euler}. There are two corrections with respect to this intuitive approach. First, there is in case (a) an additional multiplicative term $\theta>1$ (that can be optimized) when $d$ is odd. This correction is unavoidable for $d=1$, anyhow for odd $d>1$, it can be avoided up to an additional additive factor like the above $\chi$ (see the proof of Proposition \ref{SHARP_MINO} for details). We kept this presentation to be homogeneous for all odd dimensions.
 
 Also, an additive correction (or penalty) factor $\chi$ appears. It is mainly due to our growth assumption $\A{G_{\rho_0,\beta}}$.% and to the fact that the lower bound in involves $C^{-1} $.
Observe anyhow that, for given $T>0,C\ge 1,\varepsilon>0$ s.t. $|A|\ge \varepsilon $, %in case (a),
 if the dimension $d$ is large enough, by definition of $K(d,A)$, we have $\chi=0 $. Still, for $d=1$ (which can only occur in case (a)) we cannot avoid the correction factor $\chi $.  %The dependence of $\chi $ on $T$
\end{REM}
\begin{REM}
Let us also specify that in the above definition of $\chi$, $\rho_0$ is not meant to go to zero, even though some useful functions like $|.|$ satisfy $\A{G_{\rho_0, 1} }$ with any $\rho_0>0$. 
Actually, the bound is particularly relevant in `large regimes", that is when $r/\beta $ is not assumed to be small. Also, we could replace in the above definition of $\chi$, $\rho_0$ by $R>0 $ as soon as $r/\beta\ge R $. In particular, if $F$ satisfies $\A{G_{\rho_0,\beta}}$, for $R\ge \rho_0$ it also satisfies $\A{G_{\textit{R},\beta}}$. We gave the statement with $\rho_0 $ in order to be uniform w.r.t. the threshold $\rho_0 $ appearing in the growth assumption of $F$ but the correction term can be improved in function of the  deviation factor $r/\beta $.
\end{REM}

\begin{REM}
 Note that under \A{UE}, \A{SB}, in case (a), the martingale problem in the sense of Stroock and Varadhan is well posed for equation \eqref{SDE}, see Theorem 7.2.1 in \cite{stro:vara:79}. Also, from Theorem \ref{Aronson_Euler} and the estimates of Section \ref{PARAM}, one can deduce that the unique weak solution of the martingale problem has a smooth density that satisfies Aronson like bounds. Furthermore, a careful reading of \cite{kona:mamm:02} emphasizes that the discretization error analysis carried therein can be extended to our current framework, i.e. we only need boundedness of the drift and uniform spatial H\"older continuity of the (non-degenerated) diffusion coefficient to control  $E_D(\Delta) $. Hence, the above concentration result gives that in case (a), one can control the global error $E(M,\Delta):=E_D(\Delta)+E_{MC}(M,\Delta)$. The well-posedness of the martingale problem in case (b) remains to our best knowledge an open question and will concern further research. 
\end{REM}

%% Remarque sur les regimes de concentration dans le cas b
\begin{REM}
In case (b), the concentration regime in the above bounds highly depends on $T$. Since the two components do not have the same scale we have that, in short time, the concentration regime is the one of the non degenerated component in the upper bound (resp. of the degenerated component in the lower bound). For large $T$, it is the contrary. 
\end{REM}

We now consider an important case for applications in case (b). Namely, in kinetic models (resp. in financial mathematics) it is often useful to evaluate the expectation of functions that involve the difference of the first component and its normalized average (which corresponds to a time normalization of the second component).  This allows to compare the velocity (resp. the price) at a given time $T$ and the averaged velocity (resp. averaged price) on the associated time interval.
Obviously, the normalization is made so that the two components have  time-homogeneous scales. We have the following result.
\begin{COROL} \label{corol-asiat}
In case $(b)$, if $f$ in \eqref{EST_MC} writes $f(T,x) = g(T, \T_T^{-1} x)$ where $\T_T^{-1} = \pa{\begin{array}{c c} \mathbf{I_{d' \times d'}} & \mathbf{0_{d'\times d'}}\\ \mathbf{0_{d'\times d'}} & T^{-1} \mathbf{I_{d' \times d'}} \end{array}}$ and $g$ is a Lipschitz continuous function in space and measurable bounded in time satisfying $\abs{\nabla g(T, .)}_\infty \le 1$ then we have for every $\Delta=T/N,\ N\ge 1$,    
\begin{equation*}
	\forall M \ge 1, \quad \probs{x}{\abs[1]{E_{MC}(M, \Delta)} \ge r + \delta_{C, \alpha(T)}} \le 2 e^{- \frac{M}{\alpha(T)} r^2}, 
\end{equation*}
with $\alpha(T) = (4-\sqrt{13}) \frac{c}{T}$ and $\delta_{C, \alpha(T)} = 2 \sqrt{\alpha(T) \log C}$.
\end{COROL}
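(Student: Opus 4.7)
The plan is to reduce the corollary to the log-Sobolev / Herbst concentration argument used for Theorem \ref{MTHM}, applied this time after rescaling to the i.i.d.\ samples $Y_T^{\Delta, i} := \T_T^{-1} X_T^{\Delta, i}$ and the one-Lipschitz function $G := g(T,\cdot)$. Since $f(T, X_T^\Delta) = G(Y_T^\Delta)$, the error $E_{MC}(M,\Delta)$ is exactly the centered empirical mean of $G$ over $M$ copies of $Y_T^\Delta$. The gain of the rescaling is that the Gaussian dominating the law of $Y_T^\Delta$ has covariance with all entries of order $T$, so the concentration constant no longer inherits the $T^3$-scaling of the degenerate block that plagues the general case (b) of Theorem \ref{MTHM}.

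First I would produce a Gaussian upper bound for the density of $Y_T^\Delta$. Since $\T_T$ is diagonal with $\abs{\det \T_T} = T^{d'}$, one has $p_Y^\Delta(y) = T^{d'}\, p^\Delta(0,T,x,\T_T y) \le C T^{d'} p_c(T, x, \T_T y)$ by Theorem \ref{Aronson_Euler}. Substituting $(x')^{1,d'} = y^{1,d'}$ and $(x')^{d'+1,d} = T\, y^{d'+1,d}$ in the explicit expression of $p_c$ in case (b) and absorbing the Jacobian $T^{d'}$ into the prefactor (the consistency check is $T^{d'} (\sqrt{3}c/(2\pi T^2))^{d/2} = (\sqrt{3}c/(2\pi T))^{d/2}$), one identifies $T^{d'} p_c(T, x, \T_T y)$ as a Gaussian density $\tilde q_T(y)$ in $y$ whose covariance is block-diagonal in pairs of coordinates, with $2\times 2$ reduced block (tensorized with $\mathbf{I_{d'\times d'}}$)
\[\Sigma_T = \frac{T}{c}\begin{pmatrix} 2 & 1 \\ 1 & 2/3\end{pmatrix}.\]

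Next comes an elementary eigenvalue computation: $\tr(\Sigma_T) = 8T/(3c)$ and $\det(\Sigma_T) = T^2/(3c^2)$ give eigenvalues $T(4\pm\sqrt{13})/(3c)$, so $\lambda_{\max}(\Sigma_T) = T(4+\sqrt{13})/(3c) = T/[c(4-\sqrt{13})]$ after rationalization. The Gaussian measure with density $\tilde q_T$ therefore satisfies a logarithmic Sobolev inequality with constant $\lambda_{\max}(\Sigma_T)$.

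Finally I would invoke the concentration framework of Section \ref{CONC} verbatim: from the domination $p_Y^\Delta \le C\, \tilde q_T$, tensorization of the LSI on the product measure $\tilde q_T^{\otimes M}$, the $1/\sqrt{M}$-Lipschitz character of the empirical mean $(y^1,\dots,y^M) \mapsto \frac{1}{M}\sum_i G(y^i)$ in $\R^{dM}$, and the modified Herbst argument, one obtains the announced sub-Gaussian tail bound with concentration constant $\alpha(T) = 2\lambda_{\max}(\Sigma_T) = 2T/[c(4-\sqrt{13})]$ and bias $\delta_{C,\alpha(T)} = 2\sqrt{\alpha(T)\log C}$, exactly as in \eqref{MAJO-THM}. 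The only genuinely new content compared to Theorem \ref{MTHM} is the change of variables identifying $T^{d'} p_c(T, x, \T_T y)$ as a Gaussian density in $y$ together with the associated eigenvalue computation; I expect the only mildly delicate point to be the bookkeeping of the prefactor, i.e.\ checking that $T^{d'} p_c(T,x,\T_T y)$ is exactly normalized for the covariance $\Sigma_T$. Beyond that, the proof is a direct transcription of the proof of \eqref{MAJO-THM}.
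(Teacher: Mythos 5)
Your proof is correct and follows essentially the same route as the paper: change of variables $Y_T^\Delta = \T_T^{-1}X_T^\Delta$, identify the Gaussian that dominates its law (equivalently, compute $\Hess_{y'}V_{T,\T_T y}(\T_T y') = \Sigma_T^{-1}$, which has reduced block $\frac{c}{T}\bigl(\begin{smallmatrix}2 & -3\\ -3 & 6\end{smallmatrix}\bigr)$ and smallest eigenvalue $\lambda = c(4-\sqrt{13})/T$), then apply Proposition \ref{THMCRITERE} and Corollary \ref{PROPn}; your covariance-side computation via $\lambda_{\max}(\Sigma_T)$ is just the inverse of the paper's Hessian-side computation, and both give the same result. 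One small caution: your $\alpha(T) = 2\lambda_{\max}(\Sigma_T) = 2T/[c(4-\sqrt{13})]$ is the value the argument actually produces (and matches the $\alpha(T)=2/\lambda$ convention of case (a) of Theorem \ref{MTHM}), so be aware that the value $\alpha(T)=(4-\sqrt{13})c/T$ printed in the corollary's statement is $\lambda$ itself rather than the log-Sobolev constant $2/\lambda$ — apparently a slip in the paper, as its $T$-scaling is also inconsistent with the rest of the theorem.
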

A lower bound could be derived similarly to Theorem \ref{MTHM}.

The proof of Theorems \ref{Aronson_Euler} and \ref{MTHM} (as well as Corollary \ref{corol-asiat}) are respectively postponed to Sections \ref{SEC_PREUVE_AR} and \ref{PROOF_MTHM}.

\mysection{Gaussian concentration and non asymptotic Monte Carlo bounds} 
\label{CONC}

\subsection{Gaussian concentration - Upper bound}
We recall that a probability measure $\gamma$ on $\R^d $ satisfies a logarithmic Sobolev inequality with constant $\alpha > 0$ if for all $f \in H^1(\d \gamma):=\{g\in L^2(\d\gamma): \int \abs{\nabla g}^2 \d\gamma<+\infty\}$ such that $f \ge 0$, one has 
\begin{equation} \tag{$\mathcal{LSI}_\alpha$} \label{ISL}
	\ent_\gamma(f^2) \le \alpha \int \abs{\nabla f}^2 \d \gamma,
\end{equation} 
where $\ent_\gamma(\phi) = \int \phi \log(\phi) \d \gamma - \pa{\int \phi \d \gamma} \log\pa{\int \phi \d \gamma}$ denotes the entropy of the measure $\gamma$. In particular, we have the following result (see \cite{ledo:99} Section 2.2 eq. (2.17)). 
\begin{PROP} \label{THMCRITERE} 
Let $V$ be a $\mathcal{C}^2$ convex function on $\R^d$ with $\Hess V \ge \lambda  \mathbf{I_{d \times d}}$, $\lambda > 0$ and such that $e^{-V}$ is integrable with respect to the Lebesgue measure. Let $\gamma(\d x) = \frac{1}{Z} e^{-V(x)} \d x$ be a probability measure (Gibbs measure). Then $\gamma$ satisfies a logarithmic Sobolev inequality with constant $\alpha = \frac{2}{\lambda}$.
\end{PROP}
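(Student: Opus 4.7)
The plan is to invoke the classical Bakry--\'Emery $\Gamma_2$ calculus associated with the reversible diffusion semigroup of $\gamma$. First, I would introduce the Markov diffusion semigroup $\pa{P_t}_{t \ge 0}$ generated by $L = \Delta - \nabla V \cdot \nabla$, which admits $\gamma$ as its reversible invariant measure (either analytically through an integration by parts, or probabilistically as the law of the overdamped Langevin SDE $\d X_t = -\nabla V(X_t) \d t + \sqrt 2 \d B_t$, whose unique non-explosive solution exists thanks to the convexity of $V$). The associated carr\'e du champs is $\Gamma(f,g) = \nabla f \cdot \nabla g$, and its iterated version $\Gamma_2(f) = \frac 12 L\Gamma(f,f) - \Gamma(f, Lf)$ satisfies, by a Bochner-type computation using the particular form of $L$,
\begin{equation*}
	\Gamma_2(f) = \norm{\Hess f}^2 + \psca{\Hess V \cdot \nabla f, \nabla f} \ge \lambda \abs{\nabla f}^2 = \lambda \Gamma(f,f),
\end{equation*}
which is precisely the curvature-dimension condition $CD(\lambda, \infty)$.

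Next, from this $\Gamma_2$ lower bound I would derive the pointwise semigroup gradient estimate
\begin{equation*}
	\frac{\abs{\nabla P_t f}^2}{P_t f} \le e^{-2\lambda t}\, P_t\!\pa{\frac{\abs{\nabla f}^2}{f}}, \qquad t \ge 0, \quad f > 0 \text{ smooth,}
\end{equation*}
by differentiating the interpolation $s \mapsto P_s\!\pa{\abs{\nabla P_{t-s} f}^2/P_{t-s} f}$ on $[0,t]$ and showing, via the chain rule for the diffusion $L$ together with $\Gamma_2 \ge \lambda \Gamma$, that its logarithmic derivative is at least $2\lambda$.

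The conclusion follows by combining this estimate with the de Bruijn-type entropy dissipation identity
\begin{equation*}
	\ent_\gamma(g) = \int_0^\infty \int \frac{\abs{\nabla P_t g}^2}{P_t g} \d \gamma\, \d t,
\end{equation*}
obtained by computing $\frac{\d}{\d t} \int P_t g \log P_t g \,\d\gamma = -\int \abs{\nabla P_t g}^2/P_t g \,\d\gamma$ (reversibility and integration by parts) and using the ergodic convergence $P_t g \to \int g \,\d\gamma$ as $t \to +\infty$. Specializing to $g = f^2$ and exploiting the invariance of $\gamma$ under $P_t$ yields
\begin{equation*}
	\ent_\gamma(f^2) \le \int_0^\infty e^{-2\lambda t} \d t \int \frac{\abs{\nabla f^2}^2}{f^2} \d\gamma = \frac{1}{2\lambda} \cdot 4 \int \abs{\nabla f}^2 \d\gamma = \frac{2}{\lambda} \int \abs{\nabla f}^2 \d\gamma,
\end{equation*}
which is \eqref{ISL} with $\alpha = 2/\lambda$. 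Extension from smooth strictly positive $f$ to the whole of $H^1(\d\gamma)$ with $f \ge 0$ is obtained through a standard regularization ($f \mapsto \sqrt{f^2 + \varepsilon}$ followed by mollification, then $\varepsilon \to 0$).

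The main technical obstacle is the rigorous justification of the gradient estimate: differentiating $s \mapsto P_s(\abs{\nabla P_{t-s} f}^2/P_{t-s} f)$ requires sufficient regularity of $P_t f$ and of $V$ to legitimize the chain rule and commute $L$ with division, and one must check that $P_{t-s} f$ stays uniformly bounded away from $0$, which is ensured by starting from strictly positive bounded $f$ and the strict positivity preservation of $P_t$.
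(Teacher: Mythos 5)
The paper does not give a proof of this proposition: it is stated as a known fact with a pointer to Ledoux's survey (reference \cite{ledo:99}, Section 2.2, eq.\ (2.17)), where it appears as the Bakry--\'Emery criterion. Your argument is precisely that criterion's standard proof --- Bochner's identity giving $\Gamma_2 \ge \lambda \Gamma$, the semigroup interpolation for the pointwise gradient bound, and de Bruijn's entropy-dissipation identity yielding the sharp constant $2/\lambda$ --- so it is correct and coincides with the proof in the reference the paper relies on.
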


Throughout this section we consider a probability measure $\mu$ with density $m$ with respect to the Lebesgue measure $\lambda_K$ on $\R^K$ (here we have in mind $K = d$ or $K = Md$, $M$ being the number of Monte Carlo paths). We assume that $\mu$ is dominated by a probability measure $\gamma$ in the following sense 
\begin{equation} \label{DOMI} \tag{$\mathcal{H}_{\kappa, \alpha}$}
 	\gamma(\d x) = q(x) \d x \;\text{ satisfies }\; \eqref{ISL} \quad \text{and} \quad \exists \kappa \ge 1, \quad \forall x \in \R^K, \quad m(x) \le \kappa q(x).
\end{equation}

\begin{PROP} \label{PROP1}
Assume that $\mu$ and $\gamma$ satisfy \eqref{DOMI}. Then for all Lipschitz continuous function $F:\R^K\rightarrow \R$ s.t. $\abs{\nabla F}_{\infty} \le 1$, 
\begin{equation*}
	\forall r > 0, \quad \probs[2]\mu{F(Y) - \mu(F) \ge r + W_1(\mu, \gamma)} \le \kappa e^{-\frac{r^2}{\alpha}}, 
\end{equation*}
where $W_1(\mu, \gamma) = \ds \sup_{\abs{\nabla F}_{\infty} \le 1} \abs{\mu(F) - \gamma(F)}$ (Wasserstein distance $W_1 $ between $\mu $ and $\gamma $).
\end{PROP}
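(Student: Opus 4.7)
The plan is to run the classical Herbst argument for the dominating measure $\gamma$, then transfer the resulting Gaussian concentration to $\mu$ using both the $L^\infty$ domination of densities and the definition of the Wasserstein distance $W_1$.

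First, I would establish Gaussian concentration under $\gamma$: for every $1$-Lipschitz $F : \R^K \to \R$,
\begin{equation*}
\gamma\pa{F - \gamma(F) \ge r} \le e^{-r^2/\alpha}, \qquad r > 0.
\end{equation*}
For this I would use Herbst's argument. Setting $H(\lambda) = \int e^{\lambda F} \d \gamma$ (assuming $F$ is first truncated/bounded and then removing the truncation by monotone convergence) and applying \eqref{ISL} to $f = e^{\lambda F/2}$, one gets
\begin{equation*}
\lambda H'(\lambda) - H(\lambda) \log H(\lambda) \le \frac{\alpha}{4} \lambda^2 \abs{\nabla F}_\infty^2 \, H(\lambda) \le \frac{\alpha}{4} \lambda^2 H(\lambda),
\end{equation*}
which rewrites as $\bigl( \lambda^{-1} \log H(\lambda) \bigr)' \le \alpha/4$. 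Integrating from $0^+$ with the initial condition $\lambda^{-1} \log H(\lambda) \to \gamma(F)$ as $\lambda \to 0$ yields $\log H(\lambda) \le \lambda \gamma(F) + \alpha \lambda^2 / 4$. Chebyshev's exponential inequality and optimization in $\lambda > 0$ then give the claimed Gaussian tail.

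Next I would transfer this bound to $\mu$. Since $F$ is $1$-Lipschitz, the Kantorovich--Rubinstein duality used in the definition of $W_1$ gives $\gamma(F) \le \mu(F) + W_1(\mu, \gamma)$, so the event
\begin{equation*}
\ac{F - \mu(F) \ge r + W_1(\mu,\gamma)} \subset \ac{F - \gamma(F) \ge r}.
\end{equation*}
The domination hypothesis $m \le \kappa q$ yields $\mu(A) \le \kappa \gamma(A)$ for every Borel set $A \subset \R^K$, hence
\begin{equation*}
\mu\pa{F - \mu(F) \ge r + W_1(\mu,\gamma)} \le \mu\pa{F - \gamma(F) \ge r} \le \kappa \, \gamma\pa{F - \gamma(F) \ge r} \le \kappa e^{-r^2/\alpha},
\end{equation*}
which is the announced bound.

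The only genuinely delicate point is justifying the use of Herbst's differential inequality for a general $1$-Lipschitz, possibly unbounded, $F$: one truncates $F$ by $F_n := (-n) \vee F \wedge n$, applies \eqref{ISL} to $e^{\lambda F_n/2}$ (which lies in $H^1(\d\gamma)$ and whose gradient is still bounded by $\abs{\nabla F}_\infty \le 1$ almost everywhere on the level sets), and then passes to the limit $n \to \infty$ by monotone convergence in $H(\lambda)$. Everything else is a direct combination of the $L^\infty$-domination and the Kantorovich--Rubinstein characterization of $W_1$.
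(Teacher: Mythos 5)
Your proof is correct and uses essentially the same ingredients as the paper's: Herbst's argument applied to the dominating measure $\gamma$, the $L^\infty$ domination $m\le \kappa q$, and the Kantorovich--Rubinstein characterization of $W_1$. The only difference is bookkeeping: the paper applies the domination at the level of Laplace transforms, $\E_\mu[e^{\lambda F}]\le\kappa\E_\gamma[e^{\lambda F}]$, then runs Markov's inequality and optimizes in $\lambda$ for $\mu$, whereas you run Markov and optimize for $\gamma$ first, then transfer the resulting tail bound by $\mu(A)\le\kappa\gamma(A)$ and the event inclusion $\{F-\mu(F)\ge r+W_1\}\subset\{F-\gamma(F)\ge r\}$; the two orderings produce the identical final estimate.
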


\begin{proof}
By the Markov inequality, one has for every $\lambda > 0$,
\begin{equation}
	\probs[2]\mu{F(Y) - \mu(F) \ge r + W_1(\mu, \gamma)} \le e^{-\lambda (\mu(F) + r + W_1(\mu, \gamma))} \esps[2]\mu{e^{\lambda F(Y)}}, \label{domi2}
\end{equation}
and by \eqref{DOMI}, $\esps[1]\mu{e^{\lambda F(Y)}} \le \kappa \esps[1]\gamma{e^{\lambda F(Y)}}$. 
Since $\gamma$ satisfies a logarithmic Sobolev inequality with constant $\alpha > 0$, the Herbst argument (see e.g. Ledoux \cite{ledo:99} section 2.3) gives 
\begin{equation*}
	\esps[2]\gamma{e^{\lambda F(Y)}} \le e^{\lambda \gamma(F) + \frac{\alpha}{4} \lambda^2},
\end{equation*}
so that $\esps[1]\mu{e^{\lambda F(Y)}} \le \kappa e^{\lambda \mu(F) + \frac{\alpha}{4} \lambda^2 + \lambda \pa{\gamma(F) - \mu(F)}}$ and  
\begin{equation} \label{control_laplace}
	\esps[2]\mu{e^{\lambda F(Y)}} \le \kappa e^{\lambda \mu(F) + \frac{\alpha}{4} \lambda^2 + \lambda W_1(\mu, \gamma)},
\end{equation}
since owing to the definition of $W_1$ one has $W_1(\mu, \gamma) \ge \gamma(F) - \mu(F)$.
Plugging the above control \eqref{control_laplace} into \eqref{domi2} yields 
\begin{equation*}
	\probs[2]\mu{F(Y) - \mu(F) \ge r + W_1(\mu, \gamma)} \le \kappa e^{-\lambda r + \frac{\alpha}{4}\lambda^2}.
\end{equation*}
An optimization on $\lambda$ gives the result.
\end{proof}

\begin{LEMME} \label{LEMW1}
Assume that $\mu$ with density $m$ and $\gamma$ with density $q$ satisfy the domination condition
\begin{equation*}
	\exists \kappa \ge 1, \quad \forall x \in \R^d, \quad m(x) \le \kappa q(x)
\end{equation*}
and that there exist $(\alpha, \beta_1, \beta_2) \in (\R_+)^3$ such that for all Lipschitz continuous function $F$ satisfying $\abs{\nabla F}_\infty \le 1$ and for all $\lambda > 0$, $\ds \esps{\gamma}{e^{\lambda F(Y)}} \le e^{\lambda \gamma(F) + \frac{\alpha}{4} \lambda^2 + \beta_1 \lambda + \beta_2}$. Then we have, $W_1(\mu, \gamma) \le \beta_1 + \sqrt{\alpha \pa{\beta_2 + \log(\kappa)}}$.
\end{LEMME}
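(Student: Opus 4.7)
The plan is to reduce the Wasserstein bound to a one-sided Laplace-transform estimate on $\mu$ via the domination, then optimize a free parameter. Recall the Kantorovich--Rubinstein representation $W_1(\mu,\gamma)=\sup_{\abs{\nabla F}_\infty\le 1}\abs{\mu(F)-\gamma(F)}$, and that the class $\{F:\abs{\nabla F}_\infty\le 1\}$ is stable under $F\mapsto -F$. Consequently it suffices to establish the one-sided inequality $\mu(F)-\gamma(F)\le \beta_1+\sqrt{\alpha(\beta_2+\log\kappa)}$ uniformly over that class, since applying it to $-F$ yields the reverse bound $\gamma(F)-\mu(F)\le\beta_1+\sqrt{\alpha(\beta_2+\log\kappa)}$, and the two together give the desired estimate on $W_1(\mu,\gamma)$.

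Fix such an $F$ and any $\lambda>0$. The two main ingredients are, first, the domination $m\le\kappa q$, which at the level of exponential moments gives $\esps{\mu}{e^{\lambda F(Y)}}\le\kappa\,\esps{\gamma}{e^{\lambda F(Y)}}$, and, second, the assumed Gaussian-type Laplace bound on $\gamma$,
\begin{equation*}
\esps{\gamma}{e^{\lambda F(Y)}}\le \exp\pa{\lambda\gamma(F)+\tfrac{\alpha}{4}\lambda^2+\beta_1\lambda+\beta_2}.
\end{equation*}
Chaining these two controls yields an explicit upper bound on $\esps{\mu}{e^{\lambda F(Y)}}$.

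Next I apply Jensen's inequality on the left-hand side, $e^{\lambda\mu(F)}\le\esps{\mu}{e^{\lambda F(Y)}}$, take logarithms, and divide by $\lambda>0$. This produces
\begin{equation*}
\mu(F)-\gamma(F)\le \beta_1+\frac{\alpha}{4}\lambda+\frac{\beta_2+\log\kappa}{\lambda}.
\end{equation*}
Minimizing the right-hand side in $\lambda>0$ is a routine one-variable calculation: the optimal choice $\lambda_\star=2\sqrt{(\beta_2+\log\kappa)/\alpha}$ collapses the last two terms to $\sqrt{\alpha(\beta_2+\log\kappa)}$, giving the announced inequality.

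There is essentially no obstacle here: the proof is a Herbst-style argument carried out one-sidedly, with the domination absorbed into the additive constant $\log\kappa$ and then dissolved by the optimization. The only mild point to watch is that the hypothesis is given only for $\lambda>0$, which is why one uses Jensen's inequality (valid for any sign of $\lambda$ but here needed only for $\lambda_\star>0$) rather than a symmetric concentration estimate, and why one must separately consider $-F$ to recover the absolute value in the definition of $W_1$.
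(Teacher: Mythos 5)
Your proof is correct, and it takes a genuinely different route from the paper's. The paper works through the variational (Donsker--Varadhan) characterization of the entropy, $\ent_\gamma(f)=\sup\{\E_\gamma[fh]:\E_\gamma[e^h]\le 1\}$: it builds a centered function $h=\lambda F-\lambda\gamma(F)-\frac{\alpha}{4}\lambda^2-\beta_1\lambda-\beta_2$ whose exponential integrates to at most $1$ under $\gamma$, computes $\E_\mu[h]=\E_\gamma\bigl[\frac{m}{q}h\bigr]$, bounds it by $\ent_\gamma(\frac{m}{q})$, and only at the very last step invokes the domination $m\le\kappa q$ to get $\ent_\gamma(\frac{m}{q})\le\log\kappa$. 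This yields the sharper intermediate inequality $W_1(\mu,\gamma)\le\beta_1+\sqrt{\alpha(\beta_2+\ent_\gamma(m/q))}$, which the paper explicitly records as equation (control\_W1) and exploits in the remark afterward (recovering a transportation--entropy inequality of Otto--Villani type when $\beta_1=\beta_2=0$). You, by contrast, inject the domination $m\le\kappa q$ immediately at the level of exponential moments, $\E_\mu[e^{\lambda F}]\le\kappa\E_\gamma[e^{\lambda F}]$, then close the gap with Jensen's inequality $e^{\lambda\mu(F)}\le\E_\mu[e^{\lambda F}]$ before taking logarithms and optimizing in $\lambda$. Both arguments are sound and produce the identical final bound after optimization; your version is more elementary (no entropy variational formula needed), while the paper's version is more informative, since the relative entropy $\ent_\gamma(m/q)$ appears naturally and can be smaller than $\log\kappa$. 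One small point you handle correctly but is worth noting: when $\beta_2+\log\kappa=0$ the optimizer $\lambda_\star$ degenerates to $0$, but the infimum over $\lambda>0$ still yields the claimed bound $\beta_1$.
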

\begin{proof}
Recall first that for a non-negative function $f$, we have the following variational formulation of the entropy:
\begin{equation} \label{var_entropy}
	\ent_\gamma(f) = \sup\ac{\esps{\gamma}{fh}; \;\esps{\gamma}{e^h} \le 1}.
\end{equation}

W.l.o.g. we consider $F$ such that $\mu(F) \ge \gamma(F)$. Let $\lambda > 0$ and $h := \lambda F - \lambda \gamma(F) - \frac{\alpha}{4} \lambda^2 - \beta_1 \lambda - \beta_2$ so that $\esps{\gamma}{e^{h}} \le 1$ and 
\begin{equation*}
	\esps{\mu}{h} = \esps{\gamma}{\frac{m}{q} h} = \lambda \pa{\mu(F) - \gamma(F)} - \frac{\alpha}{4} \lambda^2 - \beta_1 \lambda - \beta_2. 
\end{equation*}
We then have
\begin{align*}
	\mu(F) - \gamma(F) &= \frac{\alpha}{4} \lambda + \beta_1 + \frac{1}{\lambda}\pa{\beta_2 + \esps{\gamma}{\frac{m}{q} h}}, \\
	& \overset{\eqref{var_entropy}}{\le} \frac{\alpha}{4} \lambda + \beta_1 + \frac{1}{\lambda}\pa{\beta_2 + \ent_{\gamma}\pa{\frac{m}{q}}}.
\end{align*}
An optimization in $\lambda$ yields 
\begin{equation} \label{control_W1}
	\mu(F) - \gamma(F) \le \beta_1 + \sqrt{\alpha \pa{\beta_2 + \ent_{\gamma}\pa{\frac{m}{q}}}}
\end{equation}
Now using the domination condition, one has $\ent_{\gamma}\pa{\frac{m}{q}} = \int \frac{m}{q} \log \pa{\frac{m}{q}} \d \gamma \le \log(\kappa)$ and the results follows.
\end{proof}

\begin{REM}
Note that if $\gamma$ satisfies an \eqref{ISL} we have $\beta_1 = \beta_2 = 0$ and the result \eqref{control_W1} of Lemma \ref{LEMW1} reads $W_1(\mu, \gamma) \le \sqrt{\alpha \ent_{\gamma}\pa{\frac{m}{q}}}\le \sqrt{\alpha\log(\kappa)}$. For similar controls concerning the $W_2$ Wasserstein distance see Theorem 1 of Otto and Villani \cite{otto:vill:00} or Bobkov et al. \cite{bob:gen:led:01}.
\end{REM}

Using the tensorization property of the logarithmic Sobolev inequality we derive the following Corollary. Note that the term $\delta_{\kappa, \alpha}$ can be seen as a penalty term due on the one hand to the transport between $\mu$ and $\gamma$, and on the other hand to the explosion of the domination constant $\kappa^M$ between $\mu^{\otimes M}$ and $\gamma^{\otimes M}$ when $M$ tends to infinity. We emphasize that the bias $\delta_{\kappa, \alpha}$ is independent of $M$.  
Hence, the result below is especially relevant when $r$ and $\delta_{\kappa,\alpha}$ have the same order. In particular, the non-asymptotic confidence interval given by \eqref{RES_CORO_MC_UP} cannot be compared to the asymptotic confidence interval deriving from the central limit theorem whose size has order $O(M^{-1/2})$.

\begin{COROL} \label{PROPn}
Let $Y^{1}, \dots, Y^{M}$ be \emph{i.i.d.} $\R^d $-valued random variables with law $\mu$. Assume there exist $\alpha>0,\ \kappa\ge 1$ and $\gamma$ such that \eqref{DOMI} holds on $\R^d $. Then, for all Lipschitz continuous function $f:\R^d\rightarrow \R$ satisfying $\abs{\nabla f}_\infty \le 1$, we have 
\begin{equation} \label{RES_CORO_MC_UP}
	\forall r > 0, \, M \ge 1, \quad \prob{\abs{\frac{1}{M}\sum_{k=1}^M f(Y^{k}) - \esp{f(Y^{1})} } \ge r + \delta_{\kappa, \alpha}} \le 2 e^{- M \frac{r^2}{\alpha}},
\end{equation}
with $\delta_{\kappa, \alpha} = 2\sqrt{\alpha \log(\kappa)} \ge 0$. 
\end{COROL}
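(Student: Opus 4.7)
The plan is to reduce the corollary to Proposition \ref{PROP1} applied on the product space, using that the logarithmic Sobolev inequality and the domination condition both tensorize. Consider the product measures $\mu^{\otimes M}$ (with density $m^{\otimes M}$) and $\gamma^{\otimes M}$ (with density $q^{\otimes M}$) on $(\R^d)^M$. From $m \le \kappa q$ one obtains $m^{\otimes M} \le \kappa^M q^{\otimes M}$, and by the tensorization property of \eqref{ISL} (see e.g.\ Ledoux \cite{ledo:99}) the measure $\gamma^{\otimes M}$ still satisfies \eqref{ISL} with the same constant $\alpha$. Thus $(\mu^{\otimes M}, \gamma^{\otimes M})$ satisfy $(\mathcal{H}_{\kappa^M, \alpha})$ on $\R^{Md}$.

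Next, I would introduce $F_M(y_1,\dots,y_M) := \frac{1}{M}\sum_{k=1}^M f(y_k)$; the chain rule gives $\abs{\nabla F_M}^2 = \frac{1}{M^2}\sum_k \abs{\nabla f(y_k)}^2 \le \frac{1}{M}$, so the rescaled function $\tilde F_M := \sqrt M\, F_M$ satisfies $\abs{\nabla \tilde F_M}_\infty \le 1$. Applying Proposition \ref{PROP1} to $\tilde F_M$ on $(\R^d)^M$ with the pair $(\mu^{\otimes M}, \gamma^{\otimes M})$ yields, for every $s>0$,
\begin{equation*}
\probs[2]{\mu^{\otimes M}}{\tilde F_M - \mu^{\otimes M}(\tilde F_M) \ge s + W_1(\mu^{\otimes M}, \gamma^{\otimes M})} \le \kappa^M e^{-s^2/\alpha}.
\end{equation*}
By Lemma \ref{LEMW1} (with $\beta_1=\beta_2=0$, since $\gamma^{\otimes M}$ satisfies an LSI) applied to the same pair, $W_1(\mu^{\otimes M}, \gamma^{\otimes M}) \le \sqrt{\alpha \log(\kappa^M)} = \sqrt{\alpha M \log \kappa}$.

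The main technical step is to absorb the constant $\kappa^M$ into the exponent. Writing $\kappa^M = e^{M \log\kappa}$ and setting $s = t + \sqrt{\alpha M \log \kappa}$, the elementary inequality $s^2 - \alpha M \log\kappa \ge (s - \sqrt{\alpha M \log\kappa})^2 = t^2$ gives $\kappa^M e^{-s^2/\alpha} \le e^{-t^2/\alpha}$. Substituting this back, we obtain
\begin{equation*}
\probs[2]{\mu^{\otimes M}}{\tilde F_M - \mu^{\otimes M}(\tilde F_M) \ge t + 2\sqrt{\alpha M \log \kappa}} \le e^{-t^2/\alpha}.
\end{equation*}
Setting $t = \sqrt M \, r$ and dividing the deviation by $\sqrt M$ (recalling $\tilde F_M = \sqrt M\, F_M$) turns this into
\begin{equation*}
\prob{\frac{1}{M}\sum_{k=1}^M f(Y^k) - \esp{f(Y^1)} \ge r + 2\sqrt{\alpha \log \kappa}} \le e^{-M r^2/\alpha}.
\end{equation*}
Finally, applying the same argument to $-f$ (which also has Lipschitz constant $\le 1$) and taking a union bound yields the two-sided estimate \eqref{RES_CORO_MC_UP} with $\delta_{\kappa, \alpha} = 2\sqrt{\alpha \log \kappa}$. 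The most delicate point is the absorption step for $\kappa^M$: because the domination constant grows geometrically under tensorization, the price to pay is precisely the $M$-independent bias $\delta_{\kappa,\alpha}$, while the exponent $M r^2/\alpha$ is preserved.
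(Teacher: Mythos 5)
Your proof is correct and follows essentially the same route as the paper: tensorize $\gamma$ to get the $(\mathcal{H}_{\kappa^M,\alpha})$ domination, apply Proposition \ref{PROP1} to the rescaled $F(x_1,\dots,x_M)=\frac{1}{\sqrt M}\sum_k f(x_k)$, bound $W_1(\mu^{\otimes M},\gamma^{\otimes M})$ via Lemma \ref{LEMW1}, and absorb the $\kappa^M$ factor into the exponent by the shift $s\mapsto t+\sqrt{\alpha M\log\kappa}$. The only difference is cosmetic: the paper folds the $W_1$ bound and the absorption shift into the choice of $\tilde r$ before invoking the proposition, whereas you perform the absorption afterwards; the underlying computation is identical.
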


\begin{proof}
Let $r > 0$ and $M \ge 1$. Clearly, changing $f$ into $-f$, it suffices to prove that 
\begin{equation*}
	\prob{\frac{1}{M}\sum_{k=1}^M f(Y^{k}) - \esps{\mu}{f(Y^{1})} \ge r + \delta_{\kappa, \alpha}} \le e^{- M \frac{r^2}{\alpha}}.
\end{equation*}
By tensorization, the measure $\gamma^{\otimes M}$ satisfies an \eqref{ISL} with the same constant $\alpha$ as $\gamma$, and then the probabilities $\mu^{\otimes M}$ and $\gamma^{\otimes M}$ satisfy $(\mathcal{H}_{\kappa^M, \alpha})$ on $\R^K, K=Md $.
In this case, Lemma \ref{LEMW1} gives $\sqrt{M} \delta_{\kappa, \alpha} \ge W_1(\mu^{\otimes M}, \gamma^{\otimes M}) + \sqrt{M \alpha \log(\kappa)}$ and then 
\begin{multline*}
	\prob{\frac{1}{M}\sum_{k=1}^M f(Y^{k}) - \esps{\mu}{f(Y^{1})} \ge r + \delta_{\kappa, \alpha}} \\ \le 
	\prob{\frac{1}{\sqrt{M}}\sum_{k=1}^M f(Y^{k}) - \sqrt{M} \esps{\mu}{f(Y^{1})} \ge \sqrt{M} \pa{r + \sqrt{\alpha \log(\kappa)}} + W_1(\mu^{\otimes M}, \gamma^{\otimes M})}. 
%	
%	\probs[2]\mu{f(X) - \mu(f) \ge r +  W_1(\mu, \gamma) + \sqrt{c \log(\kappa)}}^n.
\end{multline*}
Applying Proposition \ref{PROP1} with the measures $\mu^{\otimes M}$ and $\gamma^{\otimes M}$, the function $F(x_1,\dots,x_M) = \frac{1}{\sqrt{M}} \sum_{k=1}^M f(x_k)$ (which satisfies $\abs{\nabla F}_{\infty} \le 1$) and $\tilde{r} = \sqrt{M} (r+\sqrt{\alpha \log(\kappa)})$ we obtain
\begin{equation*}
	\prob{\frac{1}{M}\sum_{k=1}^M f(Y^{k}) - \esps{\mu}{f(Y^{1})} \ge r + \delta_{\kappa, \alpha}} \le \kappa^M e^{-M \frac{\pa{r + \sqrt{\alpha \log(\kappa)}}^2}{\alpha}},
\end{equation*}
and we easily conclude.
\end{proof}

%%%%%%%% A mettre en Section 3: cadre abstrait difference de normalisation, TCL versus mesure de reference.
%We also mention that when $r$ and $\delta_{C,\alpha(T)} $ have the same order, equations \eqref{MAJO-THM}, \eqref{MINO-THM} provide for ``small'' $M$ a sharper control than Berry Esseen's theorem that gives $\exists C>0, \forall r>0,\delta>0,\ \forall M\ge 1,\ |\P[\frac{M^{1/2}}{\sigma(f,\Delta)} E_{MC}(M,\Delta)\ge M^{1/2}(r+\delta)]-\P[{\cal N}(0,1)\ge M^{1/2}(r+\delta)]|\le CM^{-1/2}$.

\begin{REM} Note that to obtain the non-asymptotic bounds of the Monte Carlo procedure \eqref{RES_CORO_MC_UP}, we successively used the concentration properties of the reference measure $\gamma$, the control of the distance $W_1(\mu, \gamma)$ given by the variational formulation of the entropy (see Lemma \ref{LEMW1}) and the tensorization property of the functional inequality satisfied by $\gamma$. The same arguments can therefore be applied to a reference measure $\gamma$ satisfying a Poincar\'e inequality.
%%%%%%% De S a V
%%%%% Je prefere en rester la. C'est un peu plus complique pour la concentration etc..., il y a le regime mixte, exponentiel et Gaussien...
% je pense que ton Lemme s'applique encore pas avec Beta_1=Beta_2 mais bon...
% which implies an exponential concentration, that tensorizes and such that $W_1(\mu, \gamma)$ can be controlled using the variational formulation of the variance.
\end{REM}

\subsection{Gaussian concentration - Lower bound}
Concerning the previous deviation rate of Proposition \ref{PROP1}, a natural question consists in understanding whether it is sharp or not. Namely, for a given function $f$ satisfying suitable growth conditions at infinity, otherwise we cannot see the asymptotic growth, do we have a lower bound of the same order, i.e. with Gaussian decay at infinity? The next proposition gives a positive answer to that question.  

\begin{PROP}
\label{SHARP_MINO}
Let $f:\R^d \rightarrow \R_+$ be a Lipschitz continuous function satisfying $\abs{\nabla f}_\infty \le 1$ and assumption $\A{G_{\rho_0,\beta}}$ for given $\rho_0,\beta>0$.

For a $\mathcal{C}^2$ function $V$ on $\R^d$ such that $e^{-V}$ is integrable with respect to $\lambda_d$ and s.t. $\exists \bar\lambda\ge 1,\ \bar\lambda  \mathbf{I_{d \times d}} \ge \Hess (V)\ge 0$, let $\gamma(\d x)=e^{-V(x)}Z^{-1}\d x$ be the associated Gibbs probability measure. % (BESOIN DE LA MINORATION ???).-> NON !
We assume that $\exists \kappa\ge 1 $ s.t. for $|x|\ge \rho_0 $ the measures $\mu(\d x)=m(x)\d x $ and $\gamma(\d x) $ satisfy
$$m(x)\ge \kappa^{-1}e^{-V(x)}Z^{-1}.$$
Let $\bar\Lambda:=\frac {\bar \lambda}2+\frac{\sup_{s\in S^{d-1}}|V(s\rho_0)|}{\rho_0^2}+\frac{\sup_{s\in S^{d-1}}|\nabla  V(s\rho_0)|}{\rho_0}$.

%For $d \ge 2$, 
We have
\begin{equation*}
\forall r> 0,\quad \probs\mu{f(Y)-\mu(f)\ge r - (W_1(\mu, \gamma) + \delta(f,\gamma))} \ge \begin{cases}\frac{K(d,A)}{Z \bar \Lambda^{d/2} \kappa} \exp\left( - \bar \Lambda\left[\frac{r}{\beta}\vee \rho_0\right]^2\right),\ d \text{ even},\\
\frac{\arccos(\theta^{-1/2})K(d,A)}{Z \bar \Lambda^{d/2} \kappa} \exp\left( - \theta\bar \Lambda\left[\frac{r}{\beta}\vee \rho_0\right]^2\right), \ \forall \theta > 1, \ d \text{ odd},
\end{cases}
\end{equation*}
with $\delta(f,\gamma) = \gamma(f)+\beta\rho_0-\underline{f},\ \underline{f}:=\inf_{s\in S^{d-1}}f(s\rho_0)$, and $K(d,A)$ defined in \eqref{K_d_A} where $A \subset S^{d-1}$ appears in $\A{G_{\rho_0,\beta}}$. 
%For $d=1$, we have for all $\theta \ge 1$,
%\begin{equation*}
%\forall r\ge 0,\quad \probs\mu{f(Y)-\mu(f)\ge r - (W_1(\mu, \gamma) + \delta_f)} \ge 
%\frac{\arccos(\theta^{-1/2})}{Z^2\bar \Lambda \kappa} \exp\left( - \theta \bar \Lambda \left[\frac{r}{\beta}\vee \rho_0\right]^2\right).
%\end{equation*}
\end{PROP}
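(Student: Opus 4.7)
The plan is to lower-bound the probability by restricting to a region on which the growth assumption $\A{G_{\rho_0,\beta}}$ forces $f(Y)$ to exceed the required level, then to estimate the integral of $m$ on that region using the Gaussian upper bound on $V$ furnished by the Hessian assumption.

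First, by Kantorovich duality $\mu(f) - \gamma(f) \le W_1(\mu,\gamma)$, and using $\delta(f,\gamma) = \gamma(f) + \beta\rho_0 - \underline{f}$, one has $\mu(f) + r - W_1(\mu,\gamma) - \delta(f,\gamma) \le r + \underline{f} - \beta\rho_0$. Setting $R_* := r/\beta \vee \rho_0$ and $E := \{y\in\R^d : |y|\ge R_*,\ \pi_{S^{d-1}}(y)\in A\}$, the growth assumption applied at $y_0 := \rho_0\pi_{S^{d-1}}(y)$ yields $f(y) \ge \underline{f} + \beta(|y|-\rho_0) \ge r + \underline{f} - \beta\rho_0$ on $E$, hence
$$\probs\mu{f(Y) - \mu(f) \ge r - (W_1(\mu,\gamma) + \delta(f,\gamma))} \ge \probs\mu{Y \in E}.$$

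Next, a Taylor expansion of $V$ at $y_0$ together with $0 \le \Hess V \le \bar\lambda\,\mathbf{I_{d\times d}}$ gives, for $|y| \ge \rho_0$,
$$V(y) \le |V(y_0)| + |\nabla V(y_0)|(|y|-\rho_0) + \tfrac{\bar\lambda}{2}(|y|-\rho_0)^2 \le \bar\Lambda\,|y|^2,$$
the last inequality absorbing each of the three terms into a multiple of $|y|^2$ through $|y|/\rho_0 \ge 1$ and matching the very definition of $\bar\Lambda$. Using the hypothesis $m \ge \kappa^{-1}Z^{-1}e^{-V}$ on $E \subset B(\rho_0)^c$ and passing to polar coordinates $y = R\sigma$,
$$\probs\mu{Y \in E} \ge \frac{1}{\kappa Z}\int_E e^{-\bar\Lambda|y|^2}\d y = \frac{|A|}{\kappa Z\,\bar\Lambda^{d/2}}\,J_d\pa{\sqrt{\bar\Lambda}R_*},\qquad J_d(a) := \int_a^\infty \rho^{d-1}e^{-\rho^2}\d\rho.$$

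It remains to lower-bound $J_d$. Integration by parts gives the recursion $J_d(a) = \tfrac{a^{d-2}}{2}e^{-a^2} + \tfrac{d-2}{2}J_{d-2}(a) \ge \tfrac{d-2}{2}J_{d-2}(a)$ for $d\ge 2$. In the even case, iterating down to $J_2(a) = \tfrac{1}{2}e^{-a^2}$ gives $J_d(a) \ge \tfrac{(d/2-1)!}{2}e^{-a^2}$, and multiplying by $|A|$ reproduces $K(d,A)e^{-a^2}$. In the odd case one iterates down to $J_1(a) = \int_a^\infty e^{-\rho^2}\d\rho$ and applies a two-dimensional polar trick: writing
$$\sqrt{\pi}\,J_1(a) = \int_\R\int_a^\infty e^{-u^2-v^2}\d u\,\d v = \int_0^{\pi/2}e^{-a^2/\cos^2\psi}\d\psi$$
via $(u,v) = (R\cos\psi, R\sin\psi)$ with $R \ge a/\cos\psi$, and restricting to $\psi \in [0,\arccos(\theta^{-1/2})]$, where $\cos^{-2}\psi \le \theta$, one obtains $J_1(a) \ge \arccos(\theta^{-1/2})\,e^{-\theta a^2}/\sqrt{\pi}$ for any $\theta > 1$. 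Combined with the recursion, this yields $|A|J_d(a) \ge K(d,A)\arccos(\theta^{-1/2})e^{-\theta a^2}$ for odd $d$. Substituting $a^2 = \bar\Lambda(r/\beta \vee \rho_0)^2$ gives the two announced bounds.

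The main delicacy is the polar trick of the final step for odd $d$: a one-dimensional Gaussian tail cannot be lower-bounded by a constant multiple of $e^{-a^2}$ alone, and paying a factor $\theta > 1$ in the exponent is unavoidable, which is precisely why the odd-$d$ bound carries the multiplicative $\arccos(\theta^{-1/2})$ and the exponent $\theta\bar\Lambda$ rather than $\bar\Lambda$.
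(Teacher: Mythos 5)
Your proof is correct and follows essentially the same route as the paper: restrict to the set where the growth assumption forces $f$ to exceed the required level, bound $V$ by $\bar\Lambda|y|^2$ via Taylor expansion, pass to polar coordinates, and handle the radial Gaussian tail by an integration-by-parts recursion together with the two-dimensional polar trick for odd $d$. The only differences are cosmetic (your $J_d$ with $e^{-\rho^2}$ versus the paper's $Q_d$ with $e^{-\rho^2/2}$, and you present the odd-$d$ tail bound via the half-plane identity $\sqrt\pi J_1(a)=\int_0^{\pi/2}e^{-a^2/\cos^2\psi}\,\d\psi$ rather than the paper's cone-intersection formulation, but these are the same estimate).
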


\begin{proof} Set $E:=\{Y \in A\times [\rho_0, \infty )\}$. Here we use the convention that for $d=1$, $A\times [\rho_0,+\infty)\subset (-\infty ,-\rho_0]\cup [\rho_0,+\infty) $. Write now
\begin{align}
\probs\mu{f(Y)-\mu(f)\ge r - (W_1(\mu, \gamma) + \delta(f,\gamma))} & \ge 
\probs\mu{f(Y)-\mu(f)\ge r - (W_1(\mu, \gamma) + \delta(f,\gamma)), E} \notag \\
& \ge  \kappa^{-1} \probs{\gamma}{f(Y)\ge r -\beta \rho_0+\underline{f}, E}:=\kappa^{-1}{\cal P}. \label{PREAL_MIN}
\end{align}
Denoting $Y_0=\rho_0\pi_{S^{d-1} }(Y) $, we have
\begin{eqnarray}
{\cal P}&\ge& \probs\gamma{f(Y_0) + \bigl(f(Y)-f(Y_0)\bigr) \ge r -\beta \rho_0+\underline{f}, E},\nonumber \\
           &\overset{\A{G_{\rho_0, \beta}} }{\ge}& \probs\gamma{\beta |Y-Y_0|\ge r -\beta \rho_0+\underline{f} -f(Y_0), E}\ge \probs\gamma{|Y|\ge \frac{r -\beta\rho_0 }{\beta}+|Y_0|, E}\nonumber\\
           &\ge  &\probs\gamma{|Y|\ge \frac{r}{\beta}\vee \rho_0,\pi_{S^{d-1}}(Y) \in A} \label{CTR_DIM}.
\end{eqnarray}       
%%%%%%%%%%%%%%%%% De S a V
%%%%%% A priori on a plus a discuter de la parite en sous ca	s.
%\begin{trivlist}
%\item[-] Let us first assume that the dimension $d$ is even. 
Write
\begin{eqnarray*}      
{\cal P}           &\ge & \bint{A}^{}\sigma(\d s) \bint{\rho_0\vee \frac  r \beta}^{+\infty} \d \rho \rho^{d-1}  \exp(-V(s\rho))Z^{-1}, \nonumber
\end{eqnarray*}
where $\sigma(\d s) $ stands for the Lebesgue measure of $S^{d-1}$. Now, $\Hess(V)\le \bar\lambda \mathbf{I_{d\times d}}$ yields $\forall \rho\ge \rho_0\vee \frac{r}\beta,\ |V(s\rho)|/\rho^2\le \bar\Lambda$, $\bar \Lambda :=\frac {\bar \lambda}2+\frac{\sup_{s\in S^{d-1}}|V(s\rho_0)|}{\rho_0^2}+\frac{\sup_{s\in S^{d-1}}|\nabla  V(s\rho_0)|}{\rho_0} $ and therefore
\begin{eqnarray}
{\cal P}&\ge& |A| \bint{\rho_0\vee \frac r \beta}^{+\infty} \d\rho \rho^{d-1}  \exp(-\bar\Lambda\rho^2)Z^{-1}
\ge \frac{|A|}{Z (2\bar \Lambda)^{d/2}}  \bint{(\rho_0\vee \frac{r}{\beta})(2 \bar \Lambda)^{1/2}}^{+\infty} \d\rho \rho^{d-1} \exp(- \frac{\rho^2}2) \notag \\
& = &\frac{|A|}{Z(2\bar \Lambda)^{d/2}} Q_d\left((\rho_0\vee \frac{r}{\beta})(2\bar \Lambda)^{1/2}\right).\label{DEF_C}
%\nonumber \\
%&\ge & \frac{|A|\exp\left (- c \bar \Lambda\left[ \rho_0\vee \frac{r}{\beta}\right]^2   \right)}{2 Z\bar \Lambda^{d/2} c},\ \label{DEF_C}
%c:= 1+\left(-(d-2)\ln(\rho_0\bar \Lambda^{1/2})/(\rho_0^2\bar \Lambda )\  0\right),
\end{eqnarray}
We now  have the following explicit expression:
\begin{eqnarray*}
\forall x>0,\ Q_d(x)&:=&\exp(-\frac{x^2}2) M(d,x),\\
 M(d,x)&:=&\begin{cases} \bsum{i=0}^{\frac d2-1}x^{2i}\prod_{j=i+1}^{\frac d2-1}2j, \ d \text{ even},\\
\bsum{i=0}^{\frac{d-1}2-1}x^{2i+1}\prod_{j=i}^{\frac{d-1}2-1}(2j+1)+ \prod_{j=0}^{\frac{d-1}2-1}(2j+1)\exp(\frac{x^2}2)\bint{x}^{+\infty}\exp(-\frac{\rho^2}2)\d \rho,\ d \text{ odd},
\end{cases}
\end{eqnarray*}
with the convention that $\sum_{i=0}^{-1}=0,\ \forall k\in \N,\ \prod_{j=k}^{k-1}j =1$. 

Observe now that $\int_{x}^\infty \exp(-\rho^2/2)d\rho=(2\pi)^{1/2}\P[{\cal N}(0,1)\ge x]\ge (2\pi)^{1/2}\P[Y\in {\cal K},|Y|\ge x/\cos(\tilde \theta)]:=(2\pi)^{1/2}{\cal Q}(x)$, where $Y\sim {\cal N}(\mathbf{0_{2\times 1}},\mathbf{I_{2\times 2}}) $
is a standard bidimensional Gaussian vector and ${\cal K}:=\{z\in \R^2, \langle z,e_1\rangle \ge \cos (\tilde \theta) |z|  \},\ \tilde \theta \in (0, \frac \pi 2),\ e_1=(1,0)$. Since ${\cal Q}(x)=\frac {\tilde \theta}\pi \exp(-\frac {x^2}{2\cos^2(\tilde \theta)})$, we derive that
\begin{equation*}
Q_d(x)\ge \begin{cases}
2^{d/2-1}(d/2-1)!\exp(-\frac{x^2}{2}), \ d\text{ even},\\
\frac{\tilde \theta  2^{d/2}}{\pi^{1/2}}\prod_{j=1}^{\frac{d-1}2}(j-\frac 12)\exp(-\frac{x^2}{2\cos^2(\tilde \theta)}),\ d\text{ odd},
\end{cases}
\end{equation*}
which plugged into \eqref{DEF_C} yields:
\begin{eqnarray*}
{\cal P}&\ge& \begin{cases}
\frac{K(d,A)}{Z\bar \Lambda^{d/2}}\exp\left(- \bar \Lambda\left[\frac r\beta \vee \rho_0 \right]^2\right),\ d \text{ even},\\
\frac{\tilde \theta K(d,A)}{Z\bar \Lambda^{d/2}}\exp\left(-\frac{\bar \Lambda}{\cos^2(\tilde \theta)}\left[\frac r\beta \vee \rho_0 \right]^2\right),\ d \text{ odd}.
\end{cases}
\end{eqnarray*}

\end{proof}

\begin{COROL}
\label{CRL_MINO}
Under the assumptions of Proposition \ref{SHARP_MINO}, let $Y^1,\cdots,Y^M$  be {\rm i.i.d.} $\R^d $-valued random variables with law $\mu$. %Then, setting for $d\ge 2,\ \tilde \Lambda =\bar \Lambda+ (\log\left(\frac{2 Z\bar \Lambda^{d/2}\kappa}{K(d,A)} \right)\vee 0 )/\rho_0^2$, and for $d=1,\  \tilde \Lambda =\frac{\bar \Lambda}{\cos^2(\theta)}+(\log\left(\frac{2 Z^2\bar \Lambda\kappa}{\theta} \right)\vee 0 )/\rho_0^2$, $\theta\in (0,\pi/2) $, 
%Set $\forall M\ge 1,\ \prob{\abs{\frac{1}{M}\sum_{k=1}^M f(Y^{k}) - \esps{\mu}{f(Y^{1})} } \ge r -(W_1(\mu,\gamma)+ \delta(f ,\gamma) ) }:=P(M,f,\mu) $. 
We have $\forall r> 0,\ \forall M\ge 1$,
\begin{eqnarray*}
 \prob{\abs{\frac{1}{M}\sum_{k=1}^M f(Y^{k}) - \esps{\mu}{f(Y^{1})} } \ge r -(W_1(\mu,\gamma)+ \delta(f ,\gamma) ) } 
\ge\\
 2\times \begin{cases}
 \exp\left(- M (\bar \Lambda + \chi) \left[\frac{r}{\beta}\vee \rho_0\right]^2\right),\ \chi = \frac{1}{\rho^2_0}\log\pa{\frac{ Z \bar \Lambda^{d/2} \kappa}{K(d,A)}}_+,\ d \text{ even},\\
 \exp\left(- M (\theta \bar \Lambda + \chi) \left[\frac{r}{\beta}\vee \rho_0\right]^2\right),\ \chi =\frac{1}{\rho_0^2}\log\pa{\frac{Z \bar \Lambda^{d/2} \kappa}{K(d,A)\arccos(\theta^{-1/2})}}_+, \ \theta \in (1,+\infty), d \text{ odd},
\end{cases}
\end{eqnarray*} 
%where $\chi = \frac{1}{\rho^2_0}\log\pa{\frac{ Z \bar \Lambda^{d/2} \kappa}{K(d,A)}}_+$ for $d$ even 
with $K(d,A)$ defined in \eqref{K_d_A}.%, and $\chi = \frac{1}{\rho_0^2}\log\pa{\frac{Z \bar \Lambda^{d/2} \kappa}{K(d,A)\arccos(\theta^{-1/2})}}_+$, $\theta \in (1,+\infty)$, for $d$ odd.
\end{COROL}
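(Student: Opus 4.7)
The plan is to tensorize the proof of Proposition \ref{SHARP_MINO}: I apply its argument on $\R^{Md}$ to the product measures $\mu^{\otimes M}$, $\gamma^{\otimes M}$, while keeping the single-copy Wasserstein penalty $W_1(\mu,\gamma)$ and bias $\delta(f,\gamma)$ by invoking $\mu(f)-\gamma(f)\le W_1(\mu,\gamma)$ at the right moment. Under $\gamma^{\otimes M}$, the density is $e^{-\bar V(y)}Z^{-M}$ with $\bar V(y):=\sum_{k=1}^M V(y_k)$; since $\Hess\bar V$ is block-diagonal, $\Hess\bar V \le \bar\lambda\,\mathbf{I}$ is preserved, and the componentwise lower bound $m(y_k) \ge \kappa^{-1} q(y_k)$ on $\{|y_k|\ge\rho_0\}$ tensorizes to $m^{\otimes M}(y) \ge \kappa^{-M} q^{\otimes M}(y)$ on $\bigcap_k\{|y_k|\ge\rho_0\}$.

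With $E_M := \bigcap_{k=1}^M \{\pi_{S^{d-1}}(Y^k) \in A,\ |Y^k|\ge\rho_0\}$, I restrict the deviation event to $E_M$ and pay the factor $\kappa^{-M}$ to pass to $\gamma^{\otimes M}$. On $E_M$ the growth assumption $\A{G_{\rho_0,\beta}}$ gives $f(Y^k)\ge\underline f+\beta(|Y^k|-\rho_0)$ componentwise, so
$$\frac 1M\sum_{k=1}^M f(Y^k)-\mu(f) \ge \underline f-\beta\rho_0-\mu(f)+\frac{\beta}{M}\sum_{k=1}^M|Y^k|.$$
Using the expansion $\delta(f,\gamma)=\gamma(f)+\beta\rho_0-\underline f$ together with $\mu(f)-\gamma(f)\le W_1(\mu,\gamma)$ (definition of $W_1$ since $f$ is $1$-Lipschitz), a direct algebraic check shows that on $E_M$ the deviation event is implied by $\sum_k|Y^k|\ge Mr/\beta$, which combined with the automatic bound $\sum_k|Y^k|\ge M\rho_0$ on $E_M$ is in turn implied by the stronger tensor event $\bigcap_k\{\pi(Y^k)\in A,\ |Y^k|\ge\eta\}$ where $\eta:=r/\beta\vee\rho_0$. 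Independence under $\gamma^{\otimes M}$ factorizes its probability into $\probs{\gamma}{|Y|\ge\eta,\,\pi(Y)\in A}^M$, and the polar-coordinate estimate already established in the proof of Proposition \ref{SHARP_MINO} (and the evaluation of $Q_d$ via a $2$-dimensional Gaussian trick for odd $d$) provides the single-copy bound $\tfrac{K(d,A)}{Z\bar\Lambda^{d/2}}e^{-\bar\Lambda\eta^2}$ for $d$ even, and $\tfrac{\arccos(\theta^{-1/2})K(d,A)}{Z\bar\Lambda^{d/2}}e^{-\theta\bar\Lambda\eta^2}$ for $d$ odd with $\theta\in(1,+\infty)$.

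Multiplying by $\kappa^{-M}$ and raising to the $M$-th power yields, for $d$ even, $\exp\bigl(-M[\bar\Lambda\eta^2+\log(Z\bar\Lambda^{d/2}\kappa/K(d,A))]\bigr)$. Since $\eta\ge\rho_0$, I absorb the polynomial-in-$M$ prefactor into the exponent: when the log is positive, $\log(\cdot)/\eta^2 \le \log(\cdot)/\rho_0^2 = \chi$; when nonpositive, $\chi=0$ is valid as the log term only strengthens the bound. This produces the announced $\exp(-M(\bar\Lambda+\chi)\eta^2)$; the odd case is identical up to the $\theta$-correction in $\bar\Lambda$ and the $\arccos(\theta^{-1/2})$ correction inside $\chi$. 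The factor $2$ in the two-sided statement is obtained by running the same argument on the symmetric tail $\frac 1M\sum_k f(Y^k)-\mu(f)\le-(r-(W_1+\delta))$. The main obstacle is bookkeeping: one must resist applying Proposition \ref{SHARP_MINO} directly to $\tilde F(y):=M^{-1/2}\sum_k f(y_k)$ on $\R^{Md}$, as that would produce the larger Wasserstein penalty $W_1(\mu^{\otimes M},\gamma^{\otimes M})$ and a bias depending on $\tilde F$, both inconsistent with the $\R^d$-quantities in the statement; invoking the single-copy estimate $\mu(f)-\gamma(f)\le W_1(\mu,\gamma)$ in the middle of the event reduction is what keeps the penalty correct.
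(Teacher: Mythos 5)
Your main argument is correct and reaches the same estimate as the paper, by a slightly more laborious route. The paper's proof is shorter and more conceptual: it observes the elementary inclusion
\[
\bigcap_{k=1}^M\bigl\{f(Y^k)-\mu(f)\ge r-(W_1(\mu,\gamma)+\delta(f,\gamma))\bigr\}\subset\Bigl\{\tfrac1M\textstyle\sum_{k=1}^M f(Y^k)-\mu(f)\ge r-(W_1(\mu,\gamma)+\delta(f,\gamma))\Bigr\},
\]
uses independence to write the probability of the left side as $\probs{\mu}{f(Y^1)-\mu(f)\ge r-(W_1(\mu,\gamma)+\delta(f,\gamma))}^M$, plugs in the single-copy lower bound from Proposition \ref{SHARP_MINO}, and absorbs the prefactor into $\chi$. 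You instead unfold the internals of Proposition \ref{SHARP_MINO} on the tensor event $\bigcap_k\{\pi_{S^{d-1}}(Y^k)\in A,\,|Y^k|\ge\eta\}$; this is also correct and reduces to the same factorized quantity $\kappa^{-M}\probs{\gamma}{\pi_{S^{d-1}}(Y)\in A,\,|Y|\ge\eta}^M$. Your bookkeeping around $W_1$, $\delta(f,\gamma)$, the implication from $\sum_k|Y^k|\ge Mr/\beta$, and the absorption of the polynomial prefactor via $\eta\ge\rho_0$ are all sound, and your caution about not naively applying Proposition \ref{SHARP_MINO} to $\tilde F$ on $\R^{Md}$ is a valid observation even though the paper sidesteps it via the intersection trick.

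However, your justification of the factor $2$ is wrong. You claim it comes from running the argument on the symmetric tail $\frac1M\sum_k f(Y^k)-\mu(f)\le-(r-(W_1(\mu,\gamma)+\delta(f,\gamma)))$. But $f\ge0$, so $\frac1M\sum_k f(Y^k)\ge0$; as soon as $r-(W_1(\mu,\gamma)+\delta(f,\gamma))>\mu(f)$ this event is empty and its probability is $0$, which cannot carry a Gaussian lower bound. The growth assumption $\A{G_{\rho_0,\beta}}$ controls only the right tail of $f$, so the lower-deviation event is not symmetric. Note that the paper's own proof does not in fact derive the factor $2$ either: it establishes only the one-sided estimate $\probs{}{\tfrac1M\sum_k f(Y^k)-\mu(f)\ge r-\cdots}\ge\exp(-M(\bar\Lambda+\chi)[\tfrac r\beta\vee\rho_0]^2)$ and stops, which yields $\ge\exp(\cdots)$ (not $\ge2\exp(\cdots)$) for the absolute-value event. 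So the factor $2$ in the stated corollary appears to be an artifact carried over from the upper bound \eqref{RES_CORO_MC_UP} rather than something either proof supports; you should simply drop the claim about the symmetric tail rather than invent a justification for it.
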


\textit{Proof.} We only consider $d$ even.
 By independence of the $((Y)^k)_{k \in \leftB 1,M\rightB}$, exploiting $\bigcap_{k=1}^M \{ f(Y^k) - \esps{\mu}{f(Y^1)}\ge r - (W_1(\mu,\gamma)+ \delta(f,\gamma)  ) \}\subset\{\frac{1}{M} \sum_{k=1}^M f(Y^k) - \esps{\mu}{f(Y^1)} \ge r - (W_1(\mu,\gamma)+ \delta(f,\gamma) ) \} $, we have 
\begin{equation*}
	\prob[3]{\frac{1}{M} \sum_{k=1}^M f(Y^k) - \esps{\mu}{f(Y^1)} \ge r - (W_1(\mu,\gamma)+ \delta(f,\gamma) )} \ge \pa{\frac{K(d,A)}{  Z\bar \Lambda^{d/2} \kappa}}^M \exp\pa{-M\bar \Lambda   \left[\frac{r}{\beta}\vee \rho_0  \right]^2}.
\end{equation*}
For $\chi = \frac{1}{\rho^2_0}\log\pa{\frac{ Z \bar \Lambda^{d/2} \kappa}{K(d,A)}}_+$
%$\tilde  \Lambda \ge (\log\left(\frac{2 Z\bar \Lambda^{d/2}\kappa}{K(d,A)} \right)\vee 0 )/\rho_0^2+\bar \Lambda$
, we thus obtain
\begin{equation*}
	\prob{\frac{1}{M} \sum_{k=1}^M f(Y^k) - \esps{\mu}{f(Y^1)} \ge r - (W_1(\mu,\gamma)+ \delta(f,\gamma) )} \ge  \exp\pa{- M (\bar \Lambda + \chi) \left[\frac{r}{\beta}\vee \rho_0  \right]^2},
\end{equation*}
which completes the proof.

\qed.

\subsection{Proofs of Theorem \ref{MTHM} and Corollary \ref{corol-asiat}}
\label{PROOF_MTHM}
\begin{trivlist} 
\item[-] \emph{Theorem \ref{MTHM} - Upper bound \eqref{MAJO-THM}.} 

In case $(a)$, the Gaussian probability $\gamma_{c,T}$ with density $p_{c}(T, x, .)$ defined in Theorem \ref{Aronson_Euler} satisfies a logarithmic Sobolev inequality with constant $\alpha(T) = \frac{2T}{c}$. The result then follows from Theorem \ref{Aronson_Euler} and Corollary \ref{PROPn}.

In case $(b)$, $\gamma_{c,T}(\d x') = p_c(T, x, x') \d x'%=$ the probability with density $p_c(T, x, x') 
= Z^{-1} e^{-V_{T,x}(x')}\d x'$ where 
\begin{equation} \label{def-VTx}
	V_{T,x}(x') = c \pa[3]{\frac{|(x')^{1,d'}-x^{1,d'}|^2}{4 T}+3\frac{|(x')^{d'+1,d}-x^{d'+1,d}-\frac{x^{1,d'}+(x')^{1,d'}} 2T|^2}{T^3}}.
\end{equation}
The Hessian matrix of $V_{T,x}$ satisfies 
\begin{equation*}
	\forall x' \in \R^d, \quad \Hess V_{T,x}(x') = \pa{\begin{array}{cc} 
	\frac{2 c}{T} \mathbf{I_{d' \times d'}} & \frac{-3 c}{T^2} \mathbf{I_{d' \times d'}}\\
	\frac{-3 c}{T^2} \mathbf{I_{d' \times d'}} & \frac{6 c}{T^3}  \mathbf{I_{d' \times d'}}\\
	\end{array}} \ge \lambda \mathbf{I_{d \times d}},
\end{equation*}
with $\lambda = \frac{c}{T} + \frac{3 c}{T^3} \pa{1-\sqrt{1+\frac{T^2}{3} + \frac{T^4}{9}}} > 0$. By Proposition \ref{THMCRITERE}, the probability $\gamma_{c,T}$ satisfies a logarithmic Sobolev inequality with constant $\alpha(T) =\frac{2T}{c}\frac{1}{1 + \frac{3}{T^2}\pa{1-\sqrt{1+\frac{T^2}{3} + \frac{T^4}{9}}}}$. We still conclude by Theorem \ref{Aronson_Euler} and Corollary \ref{PROPn}. 

\item[-] \emph{Theorem \ref{MTHM} - Lower bound \eqref{MINO-THM}.}

With the notation $p_{c^{-1}}(t-s,x,x') = Z^{-1} e^{- V_{t-s,x}(x')}$, the Hessian of the potential $V_{T,x}$ satisfies $\forall x' \in \R^d$, $\Hess V_{T, x}(x') \le \bar \lambda  \mathbf{I_{d \times d}}$ where $\bar \lambda = \frac{c^{-1}}{ T}$ in case $(a)$ and $\bar \lambda = \frac{c^{-1}}{T} + \frac{3 c^{-1}}{T^3} \pa{1+\sqrt{1+\frac{T^2}{3} + \frac{T^4}{9}}}$ in case $(b)$. Set $\gamma_{c^{-1},T}(\d x')=p_{c^{-1}}(T,x,x')\d x'$ and $\mu_T(\d x')=p^\Delta(0,T,x,x') \d x'$.  Since $\mu_T$ and $\gamma_{c,T}$ satisfy $\eqref{DOMI}$ with $\kappa = C$ and $\alpha = \alpha(T)$ defined in \eqref{DEF_ALPHA_T}, the probability $\mu_T$ satisfies \eqref{control_laplace}, and Lemma \ref{LEMW1} yields $W_1(\mu_T, \gamma_{c,T}) \le \sqrt{\alpha(T) \log(C)}$.
Now, $\gamma_{c^{-1},T} $ and $\gamma_{c,T} $ satisfy $\eqref{DOMI}$ with $\kappa=C^2$ and $\alpha=\alpha(T) $.  We therefore get from Lemma
\ref{LEMW1}, $W_1(\gamma_{c^{-1},T}, \gamma_{c,T})\le \sqrt{2 \alpha(T) \log(C)} $. Hence, $W_1(\gamma_{c^{-1},T}, \mu_T) \le 
W_1(\mu_T,\gamma_{c,T})+W_1(\gamma_{c^{-1},T},\gamma_{c,T})%\le 
%W_1(\mu_T, \gamma_{c,T}) + \sqrt{2 \alpha(T) \log(C)} 
\le (1+\sqrt{2}) \sqrt{\alpha(T) \log(C)}$. %, recalling that $W_1(\mu_T, \gamma_{c,T}) \le \sqrt{\alpha(T) \log(C)}$.
Now, by definition of $\bar \delta_{c,C,T,f}$ we have 
	$\bar{\delta}_{c,C,T,f} \ge W_1(\gamma_{c^{-1},T}, \mu_T) + \delta(f,\gamma_{c^{-1},T})$, ($\delta(f,\gamma_{c^{-1},T}) $ introduced in Proposition \ref{SHARP_MINO}) 
and Corollary \ref{CRL_MINO} yields
\begin{equation*}
	\probs[3]{x}{\frac{1}{M} \sum_{k=1}^M f(T,(X^\Delta_T)^k) - \esps{x}{f(T,X^\Delta_T)} \ge r - \bar\delta_{c,C,T,f}} 
	%\ge  \\ \probs[3]{x}{\frac{1}{M} \sum_{k=1}^M f(T,(X^\Delta_T)^k) - \esps{x}{f(T,X^\Delta_T)} \ge r - (W_1(\mu_T,\gamma_{c^{-1},T})+\delta_F)} 
	\ge  \exp\pa{-\frac M{\bar \alpha(T)}  \left[\frac{r}{\beta}\vee \rho_0  \right]^2},
\end{equation*}
where observing that for our Gaussian bounds $\bar \Lambda =\frac {\bar \lambda} 2$, and %$ = \frac {\bar \lambda} 2 + \chi$ with the penalty term $\chi$
\begin{equation*}
\frac{1}{\bar \alpha(T)}=\begin{cases}
	\frac{\bar \lambda} 2+\chi,\ \chi=\frac{1}{\rho^2_0}\log\pa{\frac{2^{-d/2} Z \bar \lambda^{d/2} C}{K(d,A)}}_+  & \text{for $d$ even}, \\
	\theta \frac{\bar \lambda} 2+\chi,\ \chi=\frac{1}{\rho_0^2}\log\pa{\frac{2^{-d/2}Z \bar \lambda^{d/2} C}{K(d,A) \arccos(\theta^{-1/2})   }}_+ & \text{for $d$ odd, $\theta> 1$},
	\end{cases}
\end{equation*}
and $K(d,A)$ defined in \eqref{K_d_A}.

%RESTE LE CALCUL DE $Z \bar \lambda^{d/2}$ POUR DIFFERENCIER CAS (a) ET CAS (b).
%\begin{eqnarray*}
%\frac1{\tilde \alpha(T)}=\begin{cases} \frac {\bar \lambda} 2+\left( \log\left(\frac{2^{1-d/2}\bar \lambda^{d/2} Z\kappa}{K(d,A)} \right) \vee 0\right)/\rho_0^2,\ d\ge 2\\
%\frac{\bar \lambda}{2\cos^2(\theta)}+\left( \log\left(\frac{\bar \lambda Z^2 \kappa}{\theta} \right) \vee 0\right)/\rho_0^2,\ \theta\in (0,\frac \pi 2),\ d=1,
%\end{cases}
%\end{eqnarray*}

Observe now that in case (a), the normalization factor $Z=Z(T,d)$ associated to $p_{c^{-1}}(T,x,.) $ writes $Z=(2\pi cT)^{d/2}$. Hence, recalling that $\bar \lambda=(cT)^{-1} $, we obtain in this case
\begin{equation*}
\chi = \begin{cases}
	\frac{1}{\rho^2_0}\log\pa{\frac{\pi^{d/2}   C  }{K(d,A)                }}_+  & \text{for $d$ even}, \\
	\frac{1}{\rho_0^2}\log\pa{\frac{\pi^{d/2} C}{K(d,A) \arccos(\theta^{-1/2})}} & \text{for $d$ odd, $\theta > 1$}.
	\end{cases}
\end{equation*}
In case (b), we have $Z=(2\pi c)^{d/2}T^d, \bar \lambda=\frac 1{cT}\left(1+\frac 3 {T^2}\left[1+\sqrt {1+\frac{T^2}3+\frac {T^4} 9}  \right] \right) $ so that 
$2^{-d/2}Z\bar \lambda^{d/2}=\left( \frac \pi T  \right)^{d/2}[T^2+3(1+\sqrt {1+\frac{T^2}3+\frac {T^4} 9} )]^{d/2} $. Eventually, since in case (b) we always have $d$ even, the correction writes
\begin{equation*}
\chi =\frac{1}{\rho^2_0}\log\pa{\frac{ \left( \frac \pi T  \right)^{d/2}[T^2+3(1+\sqrt {1+\frac{T^2}3+\frac {T^4} 9})]^{d/2}  C  }{K(d,A)                }}_+ .
\end{equation*}
This completes the proof. \qed
%Since $c and $\bar \lambda$ is proportional to $c^{-1}$, we can choose $\tilde{c}$ sufficiently big such that $\tilde{c} Z \ge (\abs{A} C^{-1})/2$.

\item [-] \emph{Proof of Corollary \ref{corol-asiat}.}

Note that the random variable $Y_T^\Delta = \T_T^{-1} X_T^\Delta$ admits the density $p_{Y}^\Delta(T, y, y') = T^{d'} p^\Delta(0,T,\T_Ty,\T_T y')$ with respect to $\lambda_d(\d y')$. By Theorem \ref{Aronson_Euler} this density is dominated by $(Z T^{d'})^{-1} e^{-V_{T,\T_T y}(\T_T y')}$ where $V_{T,x}$ is defined in \eqref{def-VTx}. The Hessian of $y' \mapsto V_{T,\T_T y}(\T_T y')$ satisfies 
\begin{equation*}
	\forall y' \in \R^d, \quad \Hess V_{T,\T_T y}(\T_T y') = \pa{\begin{array}{cc} 
	\frac{2 c}{T}  \mathbf{I_{d' \times d'}} & \frac{-3 c}{T} \mathbf{I_{d' \times d'}}\\
	\frac{-3 c}{T} \mathbf{I_{d' \times d'}} & \frac{6 c}{T}  \mathbf{I_{d' \times d'}}\\
	\end{array}} \ge \lambda  \mathbf{I_{d \times d}},
\end{equation*}
with $\lambda = \frac{c}{T}(4 - \sqrt{13})$. We still conclude by Proposition \ref{THMCRITERE}, and Corollary \ref{PROPn}.

\end{trivlist} \qed
%\end{proof}

\mysection{Derivation of the Gaussian bounds for the discretization schemes}
\label{PARAM}

\subsection{Parametrix representation of the densities}
We first derive a parametrix representation of the densities of the schemes. The key idea is to express this density in terms of iterated convolutions of the density of a scheme with frozen coefficients, that therefore admits a Gaussian density, and a suitable kernel, that has an integrable singularity. These representations have previously been obtained in Konakov and Mammen \cite{kona:mamm:00} and Konakov \textit{et al.} \cite{kona:meno:molc:09}.  

We first need to introduce some objects and notations. Let us begin with the ``frozen'' inhomogeneous scheme. 
For fixed $x,x'\in \R^{d}$,
$0\le j<j'\le N$, we define
$ \bigl( \widetilde X_{t_i}^\Delta\bigr)_{i\in\leftB j,j'\rightB}\bigl(\equiv \bigl( \widetilde X_{t_i}^{\Delta,x'}\bigr)_{i\in\leftB j,j'\rightB}\bigr)$  by
\begin{equation}
\widetilde X_{t_j}^\Delta=x,\quad \forall i\in \leftB j,j'),\ \widetilde X_{t_{i+1}}^\Delta = \widetilde X_{t_i}^\Delta+b(t_i,x')\Delta+\sigma(t_i,x')(W_{t_{i+1}}-W_{t_i})
\label{EULER_FRO}
\end{equation} 
for case (a). Note that in the above definition %$\bigl( \widetilde X_{t_i}^\Delta\bigr)_{i\in\leftB j,j'\rightB}= \bigl( \widetilde X_{t_i}^{\Delta,x'}\bigr)_{i\in\leftB j,j'\rightB}$, i.e. 
the coefficients of the process are frozen at $x'$, 
but we omit this dependence for notational convenience.
In case (b) we define $ \bigl( \widetilde {   X}^\Delta_{t_i}\bigr)_{i\in\leftB j,j'\rightB}\bigl(= \bigl( \widetilde X_{t_i}^{\Delta,x',j'}\bigr)_{i\in\leftB j,j'\rightB} \bigr)$ by $\widetilde {  X}_{t_j}^\Delta=x$, and $\forall i\in \leftB j,j') $,
\begin{equation}
\widetilde {  X}_{t_{i+1}}^\Delta = \widetilde {  X}_{t_i}^\Delta
+ \left(\begin{array}{c}b_1(t_i, x')\Delta \\
\bint{t_i}^{t_{i+1}}(\widetilde {  X}_s^\Delta)^{1,d'} \d s \end{array} \right) +B \sigma\left(t_i,x'-\left(\begin{array}{c} {\mathbf 0_{d'\times 1}} \\ (x')^{1,d'} \end{array} \right)(t_{j'}-t_i) \right)%\left(\begin{array}{c}\sigma\left(t_i,x'-\left(\begin{array}{c} {\mathbf 0_{d'\times 1}} \\ (x')^{1,d'} \end{array} \right)(t_{j'}-t_i) \right) \\ {\mathbf 0_{d'\times d'}}\end{array}\right)
(W_{t_{i+1}}-W_{t_i}).
\label{EULER_MODIF_FRO}
\end{equation} 
That is, in case (b) the frozen process also depends on $j'$ through an additional term in the diffusion coefficient. This correction term is needed, in order to have good continuity properties w.r.t. the underlying metric associated to $p_c$ when performing differences of the form $a(t_j,x)-a(t_j,x'-\left(\begin{array}{c} {\mathbf 0_{d'\times 1}} \\ (x')^{1,d'} \end{array} \right)(t_{j'}-t_i) ) $, see the definition \eqref{METRIC} and Sections \ref{SEC_PREUVE_AR} and \ref{SEC_PREUVE_TEC} for details.

From now on, $p^{\Delta}(t_j,t_{j'},x,\cdot)$ and $\widetilde{p}^{\Delta,t_{j'},x'}(t_j,t_{j'},x, \cdot)$ denote the transition densities between times $t_j$ and $t_{j'}$
of the discretization schemes \eqref{EUL}, \eqref{EUL_MODIF} and the ``frozen'' schemes \eqref{EULER_FRO}, \eqref{EULER_MODIF_FRO}  respectively. 

Let us introduce a discrete ``analogue'' to the inhomogeneous infinitesimal generators of the continuous objects from which we derive the kernel of the discrete parametrix representation.
For a sufficiently smooth function $\psi: \R^d \rightarrow \R$ and fixed $x'\in \R^d$, $j'\in (0,N\rightB$, define the family of operators $(L^{\Delta}_{t_j})_{j\in\leftB 0,j')}$ and $(\widetilde L^{\Delta}_{t_j})_{j\in \leftB 0,j')}\bigl(= (\widetilde L^{\Delta,t_{j'},x'}_{t_j})_{j\in \leftB 0,j')}\bigr)$ by
\begin{equation*}
	L^\Delta_{t_j} \psi(x) = \frac{\espc[1]{\psi(X_{t_j+\Delta}^\Delta) }{X_{t_j}^\Delta = x} - \psi(x)}{\Delta}, \quad \text{and} \quad 
	\widetilde L^\Delta_{t_j} \psi(x) = \frac{ \espc[1]{\psi(\widetilde X_{t_j+\Delta}^\Delta) }{\widetilde X_{t_j}^\Delta = x} - \psi(x) }{\Delta}.
\end{equation*}
%Note that because of technical reasons, there is a shift in time in the above definitions, i.e. it is $f(t_{j+1},t_{j'},x,x')$, instead of the ``expected`` $f(t_{j},t_{j'},x,x')$, in the right hand side of the previous equations.

Using the notation $\widetilde p^\Delta(t_j, t_{j'},x,x') = \widetilde p^{\Delta, t_{j'}, x'}(t_j, t_{j'}, x, x')$, we now define the discrete kernel $H^{\Delta}$ by
\begin{equation} 
\label{DEF_KERNEL}
H^{\Delta}(t_j,t_{j'},x,x') = \pa{L_{t_j}^\Delta - \widetilde L_{t_j}^\Delta} \widetilde {p}^{\Delta}(t_j + \Delta,t_{j'},x,x'), \quad 0 \le j < j'\le N.
\end{equation}
Note carefully that the fixed variable $x'$ appears here twice: as the final point where we consider the density and as freezing point in the previous schemes \eqref{EULER_FRO}, \eqref{EULER_MODIF_FRO}. Note also that if $j'=j+1$ \ie~$t_{j'} = t_j+\Delta$, the transition probability $\widetilde p^{\Delta, t_{j'}, x'}(t_{j+1}, t_{j+1}, .,x')$ is the Dirac measure $\delta_{x'}$ so that 
\begin{align*}
	H^{\Delta}(t_j, t_{j+1}, x, x') &= \Delta^{-1} \pa{\espc[1]{\delta_{x'}(X_{t_{j+1}}^\Delta )}{X_{t_j}^\Delta = x} - \espc[1]{\delta_{x'}(\widetilde X_{t_{j+1}}^\Delta) }{\widetilde X_{t_j}^\Delta = x} }, \\
	& = \Delta^{-1} \pa{p^{\Delta}(t_j, t_{j+1}, x, x') - \widetilde p^{\Delta, t_{j'}, x'}(t_j, t_{j+1}, x, x')}.
\end{align*}

%Define now the discrete kernel $H^{\Delta}$ by
%\begin{equation}
%\label{DEF_KERNEL}
%H^{\Delta}(t_j,t_{j'},x,x') = (L^\Delta-\widetilde L^\Delta)\widetilde {p}^{\Delta}(t_j,t_{j'},x,x'),\ 0\le j<j'\le N,
%\end{equation}
%where $\widetilde p^\Delta(t_j,t_{j'},x,x'):=\widetilde p^{\Delta,t_j,x'}(t_j,t_{j'},x,x') $.
%Note carefully that the variable $x'$ appears here twice: as the final point where we consider the density and as freezing point in the previous schemes 
%\eqref{EULER_FRO}, \eqref{EULER_MODIF_FRO}. 

From the previous definition \eqref{DEF_KERNEL}, for all $0\le j<j'\le N$, 
\begin{eqnarray*}
H^\Delta(t_j,t_{j'},x,x')
=\Delta^{-1}\int_{\R^d}^{} \left[{p}^\Delta -\widetilde{p}%
^{\Delta,t_{j'},x'}\right](t_j,t_{j+1},x,u) 
\widetilde{p}^{\Delta,t_{j'},x'}(t_{j+1},t_{j'},u,x')\d u.
\end{eqnarray*}
Analogously to Lemma 3.6 in \cite{kona:mamm:00} we obtain the following result. %%%% Bien se convaincre que l'inhomogeneite en temps ne pose pas de problemes.

\begin{PROP}[Parametrix for the density of the Euler scheme] \hspace*{.2cm}\\
\label{PROP_DEV}
Assume \A{UE}, \A{SB} are in force. Then, for $0\le t_{j}<t_{j'}\le T$, %%% Bien recontroler ceci et s'assurer des hypotheses necessaires.
%$\forall r\in\leftB 1,N\rightB $,
\begin{equation}
{p}^{\Delta}(t_j,t_{j'},x,x')=\sum_{r=0}^{j'-j}\left( \widetilde{p}^{\Delta}\otimes
_{\Delta}H^{\Delta,(r)}\right) (t_j,t_{j'},x,x'),
\label{dev_dens_EUL}
\end{equation}
where the discrete time convolution type operator $\otimes _{\Delta}$ is defined by 
\begin{eqnarray*}
(g\otimes _{\Delta}f)(t_{j},t_{j'},x,x') = \sum_{k=0}^{j'-j-1}\Delta\int_{\R^d}^{}
g(t_j,t_{j+k},x,u)f(t_{j+k},t_{j'},u ,x')\d u,
\end{eqnarray*}
where $ g\otimes_\Delta H^{\Delta,(0)} = g$ and for all $r\ge 1,\ H^{\Delta,(r)}=H^\Delta\otimes_\Delta H^{\Delta,(r-1)}$ denotes the $r$-fold discrete convolution of the kernel $H^\Delta$. W.r.t. the above definition, we use the convention that $\widetilde p^\Delta\otimes_\Delta H^{\Delta,(r)}(t_j,t_j,x,x')=0, r\ge 1 $.
\end{PROP}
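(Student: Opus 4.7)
The plan is to first derive a discrete Duhamel-type identity
\begin{equation*}
p^\Delta = \widetilde p^\Delta + p^\Delta \otimes_\Delta H^\Delta, \qquad (\star)
\end{equation*}
and then to iterate $(\star)$ by induction on $n := j'-j$, in the spirit of Lemma 3.6 of Konakov and Mammen \cite{kona:mamm:00}.

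For $(\star)$, I fix $0\le j<j'\le N$ together with the endpoint $(t_{j'},x')$, and introduce, for $k\in\leftB j,j'\rightB$,
\begin{equation*}
A_k := \int_{\R^d} p^\Delta(t_j,t_k,x,u)\,\widetilde p^{\Delta,t_{j'},x'}(t_k,t_{j'},u,x')\,\d u,
\end{equation*}
so that $A_j = \widetilde p^\Delta(t_j,t_{j'},x,x')$ and $A_{j'} = p^\Delta(t_j,t_{j'},x,x')$ by Dirac degeneracy of the transitions at equal times, whence the telescopic identity $p^\Delta - \widetilde p^\Delta = \sum_{k=j}^{j'-1}(A_{k+1}-A_k)$. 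Each increment $A_{k+1}-A_k$ is then evaluated by applying Chapman--Kolmogorov on the step $t_k\to t_{k+1}$ to $p^\Delta$ in $A_{k+1}$ and, with the same time step, to the frozen scheme $\widetilde p^{\Delta,t_{j'},x'}$ in $A_k$. The second use is legitimate because the freezing parameters $(t_{j'},x')$ are kept fixed throughout, so the frozen process remains a genuine inhomogeneous Markov chain in both cases (a) and (b); the additional $t_{j'}$-dependent correction in the diffusion of \eqref{EULER_MODIF_FRO} causes no trouble as long as $t_{j'}$ is common to every factor. The resulting integrand is $(p^\Delta-\widetilde p^{\Delta,t_{j'},x'})(t_k,t_{k+1},v,u)$ convolved against $\widetilde p^{\Delta,t_{j'},x'}(t_{k+1},t_{j'},u,x')$, which is exactly $\Delta\,H^\Delta(t_k,t_{j'},v,x')$ by \eqref{DEF_KERNEL}. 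Summing in $k$ yields $(\star)$.

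For the induction, the case $n=0$ is immediate and $n=1$ reduces to the explicit identity $\Delta\,H^\Delta(t_j,t_{j+1},x,x') = p^\Delta(t_j,t_{j+1},x,x')-\widetilde p^\Delta(t_j,t_{j+1},x,x')$ recalled just before the statement. For the inductive step, I plug into $(\star)$ the hypothesis
\begin{equation*}
p^\Delta(t_j,t_{j+k},x,u) = \sum_{r=0}^{k}(\widetilde p^\Delta \otimes_\Delta H^{\Delta,(r)})(t_j,t_{j+k},x,u), \qquad k = 0,\dots,n-1,
\end{equation*}
and swap the summations in $r$ and $k$. Identifying the resulting sum with $\sum_{r=0}^{n}(\widetilde p^\Delta \otimes_\Delta H^{\Delta,(r)})(t_j,t_{j'},x,x')$ rests on the restricted associativity
\begin{equation*}
(\widetilde p^\Delta \otimes_\Delta H^{\Delta,(r)}) \otimes_\Delta H^\Delta = \widetilde p^\Delta \otimes_\Delta H^{\Delta,(r+1)},
\end{equation*}
which follows by direct expansion from two facts: both $H^\Delta \otimes_\Delta H^{\Delta,(r)}$ and $H^{\Delta,(r)} \otimes_\Delta H^\Delta$ unfold, via the recursive definition of $H^{\Delta,(r)}$, into the same sum over strictly increasing $(r+1)$-tuples of intermediate time indices; and $(\widetilde p^\Delta \otimes_\Delta H^{\Delta,(r)})(t_j,t_{j+k},\cdot,\cdot)$ vanishes whenever $k<r$, since $r$ convolutions cannot fit in fewer than $r$ time steps, which removes the only boundary discrepancy between the two sides.

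The main delicate point is the bookkeeping of the freezing parameters: at every occurrence in the telescoping one must freeze at the \emph{final} endpoint $(t_{j'},x')$ and not at an intermediate point, and in case (b) this also fixes the additional $t_{j'}$-dependent term in the diffusion of \eqref{EULER_MODIF_FRO}. Since these parameters are shared by every factor in a given convolution chain, Chapman--Kolmogorov for the frozen process applies verbatim and the argument works uniformly in cases (a) and (b), producing \eqref{dev_dens_EUL}.
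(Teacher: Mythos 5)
Your proof is correct and follows the same route the paper only references: a discrete Duhamel identity $p^\Delta = \widetilde p^\Delta + p^\Delta \otimes_\Delta H^\Delta$ obtained by the telescoping sum $A_k = \int p^\Delta(t_j,t_k,x,u)\,\widetilde p^{\Delta,t_{j'},x'}(t_k,t_{j'},u,x')\,\d u$, followed by iteration, exactly in the spirit of Lemma 3.6 of Konakov--Mammen (which the paper cites in lieu of a proof). Your identification of the freezing bookkeeping as the delicate point is apt, though the phrasing ``these parameters are shared by every factor in a given convolution chain'' should be read carefully: the endpoint $(t_{j'},x')$ is indeed common to all factors \emph{within one application of the telescoping}, which is where Chapman--Kolmogorov for the frozen chain is invoked, but in the final Neumann-type expansion each convolution factor $\widetilde p^\Delta(t_j,t_{j+m},x,u)$ and $H^\Delta(t_{j+m},t_{j+k},u,w)$ carries its own local freezing at its own right endpoint, consistently with the paper's convention $\widetilde p^\Delta(t_a,t_b,\cdot,\cdot)=\widetilde p^{\Delta,t_b,\cdot}(t_a,t_b,\cdot,\cdot)$. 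This does not affect the argument, since Chapman--Kolmogorov is used one frozen process at a time, but it is worth being explicit that the inductive hypothesis re-introduces new freezing endpoints rather than propagating $(t_{j'},x')$ throughout.
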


\subsection{Proof of the Gaussian estimates of  Theorem \ref{Aronson_Euler}}
\label{SEC_PREUVE_AR}
%xWe only consider cases (a) and (c). The upper bound for case (b) directly derives from the upper bound for the unconstrained scheme. A lower bound could be derived in case (b) as well but for points far from the boundary. Also the chaining argument would involve additional geometric features. We refer to the work of Varopoulos, \cite{varo:01,varo:03} for Aronson like bounds in Lipschitz domains and will not discuss this topic in the  current work.

The key argument for the proof is given in the following lemma whose proof is postponed to Section \ref{SEC_PREUVE_TEC}.
\begin{LEMME}
\label{THE_LEMME_FOR_BOUNDS}
 There exists $c>0,C\ge 1 $ s.t. for all $0\le j<j'\le N$, for all $r\in\leftB 0,j'-j\rightB, \forall  (x,x')\in \R^d $,
\begin{equation}
\label{CRT_IMP}
|\widetilde p^\Delta\otimes_\Delta H^{\Delta,(r)} (t_j,t_{j'},x,x')|\le C^{r+1}(t_{j'}-t_j)^{r\eta/2}\prod_{i=1}^{r+1}B\left(1+\frac{(i-1)\eta}{2},\frac \eta 2\right)    p_c(t_{j'}-t_j,x,x').
\end{equation}
In the above equation $B(m,n):=\int_{0}^1 s^{m-1}(1-s)^{n-1} \d s $ stands for the $\beta$ function. 
\end{LEMME}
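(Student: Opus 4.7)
The plan is to induct on $r\ge 0$ after establishing two one-step building blocks: a Gaussian upper bound on the frozen density $\widetilde p^\Delta$ (which already covers the base case $r=0$) and an integrable ``singular'' bound on the parametrix kernel $H^\Delta$ exhibiting an $\eta/2$ gain in time. Combining the two reduces, after using the semigroup property of $p_c$ recalled in Theorem \ref{Aronson_Euler}, to controlling a discrete time convolution, from which the Beta function naturally emerges.

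For the base case, the frozen schemes \eqref{EULER_FRO} and \eqref{EULER_MODIF_FRO} have data-frozen coefficients, so their transition densities are explicit Gaussians; assumption \A{UE} on $a$ then yields $\widetilde p^\Delta(t_j,t_{j'},x,x')\le C p_c(t_{j'}-t_j,x,x')$ with $c,C$ depending only on $\lambda_0,L_0,\eta,d$, which matches the claim for $r=0$ after absorbing $B(1,\eta/2)$ into $C$. For the kernel, a direct computation of $L^\Delta_{t_j}-\widetilde L^\Delta_{t_j}$ shows that $H^\Delta(t_j,t_{j'},x,x')$ is a sum of terms essentially of the type $(b(t_j,x)-b(t_j,x'))\cdot \nabla_x \widetilde p^\Delta(t_j+\Delta,t_{j'},x,x')$ and $(a(t_j,x)-a(t_j,x'))\cdot \nabla_x^2\widetilde p^\Delta(t_j+\Delta,t_{j'},x,x')$ (the finite differences are explicitly computable since the frozen scheme is Gaussian). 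The Aronson-type bounds on the space derivatives of the frozen Gaussian density contribute a $(t_{j'}-t_j)^{-1}p_c(t_{j'}-t_j,x,x')$ factor for the principal second-order part, and the $\eta$-H\"older regularity of $a$ from \A{SB}, together with the elementary Gaussian absorption $|x-x'|^\eta p_c(t_{j'}-t_j,x,x')\le C(t_{j'}-t_j)^{\eta/2}p_{c'}(t_{j'}-t_j,x,x')$, produces the announced integrable singularity $(t_{j'}-t_j)^{-1+\eta/2}$. The drift contribution is less singular and is handled in the same spirit.

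For the inductive step, assume the bound with $r-1$ in place of $r$. By the definition of $\otimes_\Delta$ and of $H^{\Delta,(r)}$,
\[ \widetilde p^\Delta\otimes_\Delta H^{\Delta,(r)}(t_j,t_{j'},x,x') = \sum_{k=0}^{j'-j-1}\Delta\int_{\R^d}\bigl(\widetilde p^\Delta\otimes_\Delta H^{\Delta,(r-1)}\bigr)(t_j,t_{j+k},x,u)\,H^\Delta(t_{j+k},t_{j'},u,x')\,du. \]
Plug in the induction hypothesis on the left factor and the singular estimate on $H^\Delta$ on the right one; the spatial integral is computed via the semigroup property of $p_c$, which leaves the off-diagonal Gaussian factor $p_c(t_{j'}-t_j,x,x')$ outside the Riemann sum
\[ \sum_{k=0}^{j'-j-1}\Delta\,(t_{j+k}-t_j)^{(r-1)\eta/2}(t_{j'}-t_{j+k})^{-1+\eta/2}. \]
This sum is controlled, up to a universal multiplicative constant independent of $\Delta$, by the continuous integral $\int_0^{t_{j'}-t_j}s^{(r-1)\eta/2}(t_{j'}-t_j-s)^{-1+\eta/2}\,ds=(t_{j'}-t_j)^{r\eta/2}B\bigl(1+(r-1)\eta/2,\eta/2\bigr)$, which supplies exactly the missing Beta factor and the extra time power $(t_{j'}-t_j)^{\eta/2}$.

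The main obstacle is the singular one-step bound on $H^\Delta$. Discretely reproducing the parametrix gain requires a careful manipulation of the finite-difference operators, since $L^\Delta_{t_j}$ and $\widetilde L^\Delta_{t_j}$ are not quite local differential operators; in case (b) the anisotropy of the scaling, namely order $(t-s)^{1/2}$ on the first $d'$ components versus order $(t-s)^{3/2}$ on the last $d'$ components, makes the extraction of a clean H\"older factor in the intrinsic metric associated to $p_c$ noticeably more delicate. The specific freezing correction in \eqref{EULER_MODIF_FRO} is precisely what ensures that the H\"older estimate on $a$ can be exploited in this anisotropic metric \emph{uniformly} in $\Delta$.
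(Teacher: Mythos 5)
Your proposal is correct and follows essentially the same route as the paper: establish the one-step kernel estimate $|H^\Delta(t_j,t_{j'},x,x')|\le C(t_{j'}-t_j)^{-1+\eta/2}p_c(t_{j'}-t_j,x,x')$ (via Taylor expansion of the frozen Gaussian density and the H\"older absorption $|x-x'|^\eta p_c\le C(t-s)^{\eta/2}p_{c'}$), then convolve against the Gaussian domination $\widetilde p^\Delta\le Cp_c$, use the semigroup property of $p_c$, and control the discrete time sum by the continuous Beta integral, iterating by induction on $r$. This is exactly the scheme used in Section 4.3.1; you spell out the inductive step more explicitly than the paper (which only writes out the $r=1$ convolution and appeals to induction), and you correctly invoke the right-associated form $\widetilde p^\Delta\otimes_\Delta H^{\Delta,(r)}=(\widetilde p^\Delta\otimes_\Delta H^{\Delta,(r-1)})\otimes_\Delta H^\Delta$, which holds thanks to the associativity of $\otimes_\Delta$ and the Dirac convention at coincident times.
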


The upper bound in \eqref{Aronson_E} then follows from  Proposition \ref{PROP_DEV} and the asymptotics of the $\beta$ function. It is also useful to achieve the first step of the lower bound.\\

\textit{Proof of the lower bound.} We provide in this section the global lower bound in short time. W.l.o.g. we assume that $T\le 1$. This allows to substitute the constant $C$ appearing in \eqref{CRT_IMP} by a constant $c_0\le c\exp(|b|_\infty)$ uniformly for $t_{j'}-t_j\le T $. From the upper bound, we derive the lower bound in short time, on the compact sets of the underlying metric, see \eqref{METRIC} below. This gives the diagonal decay.
To get the whole bound in short time it remains to obtain the ``off-diagonal" bound. To this end a chaining argument is needed. In case (a) it is quite standard in the Markovian framework, see Chapter VII of Bass \cite{bass:97} or Kusuoka and Stroock \cite{kusu:stro:87}. In case (b), the chaining in the appendix of \cite{dela:meno:09} can be adapted to our discrete framework. We adapt below these arguments to our non Markovian setting for the sake of completeness.

Eventually, to derive the lower bound for an arbitrary fixed $T>0$ it suffices to use the bound in short time and the semigroup property of $  p_{c^{-1}}$. Naturally, the biggest is $T$, the worse is the constant in the global lower bound.

From  Proposition \ref{PROP_DEV} we have 
\begin{eqnarray}
p^\Delta(t_j,t_{j'},x,x')&\ge & \widetilde p^\Delta(t_j,t_{j'},x,x')-\sum_{r=1}^{j'-j} |\widetilde p^\Delta\otimes_\Delta H^{\Delta,(r)} (t_j,t_{j'},x,x')|\nonumber \\
                                   &\ge & c_0^{-1}   p_{c^{-1}}(t_{j'}-t_j,x,x')- c_0(t_{j'}-t_j)^{\eta/2}   p_{c}(t_{j'}-t_j,x,x'), \label{PREAL_MINO_DIAG}
\end{eqnarray}
exploiting $\widetilde  p^\Delta(t_j,t_{j'},x,x')\ge c_0^{-1}   p_{c^{-1}}(t_{j'}-t_j,x,x')$ (cf. Lemma 3.1 of \cite{kona:meno:molc:09} in case (b)) and \eqref{CRT_IMP} (replacing $C$ by $c_0$) for the last inequality. Equation \eqref{PREAL_MINO_DIAG} provides a lower bound on compact sets provided that $T$ is small enough. Precisely, denoting 
\begin{equation}
\label{METRIC}
	d_{t_{j'}-t_j}^2(x,x') = \begin{cases}
		\frac{\abs{x-x'}^2}{t_{j'} - t_j} & \text{in case (a),} \\
		\frac{\abs{(x')^{1,d'}-x^{1,d'}}^2}{2(t_{j'} - t_j)} + 6 \frac{\abs{(x')^{d'+1,d}-x^{d'+1,d}- \frac{x^{1,d'}+(x')^{1,d'}} 2(t_{j'}-t_j)}^2}{(t_{j'}-t_j)^3} & \text{in case (b),}
	\end{cases}
\end{equation}
%recalling $d_{t_{j'}-t_j}^2(x,x'):=|x'-x|^2/(t_{j'}-t_j) $i
%in case (a) and $d_{t_{j'}-t_j}^2(x,x'):=|(x')^{1,d'}-x^{1,d'}|^2/(t_{j'}-t_j)+3|(x')^{1,d'}-x^{d'+1,d}- \frac{x^{1,d'}+(x')^{1,d'}} 2(t_{j'}-t_jy)|^2(t_{j'}-t_j)^{-3} $ in case (c),
we have that, for a given $R_0\ge 1/2$, if $d_{t_{j'}-t_j}^2(x,x')\le 2 R_0 $ and $(t_{j'}-t_j)\le T\le \left( c_0^{-2}\exp(-c^{-1}R_0)/2\right)^{2/\eta}$,
\begin{eqnarray*}
p^\Delta(t_j,t_{j'},x,x')\ge \frac{1}{(t_{j'}-t_j)^{{\rm \bf S}} }(c_0^{-1}\exp(-c^{-1}R_0)-c_0T^{\eta/2})\ge \frac{c_0^{-1}}{2(t_{j'}-t_j)^{{\rm \bf S}}}\exp(-c^{-1}R_0)
\end{eqnarray*} 
where the parameter ${\rm \bf S} $ is the intrinsic scale of the scheme. In case (a) ${\rm \bf S}=d/2$, in case (b) ${\rm \bf S} = d$. Hence, up to a modification of $c_0^{-1}$ we have that 
\begin{equation}
\label{BD_COMP}
\exists c_0\ge 1,\ \forall 0\le j<j'\le N,\ \forall (x,x')\in (\R^d)^2,\ d_{t_{j'}-t_j}^2(x,x')\le 2 R_0, \ p^\Delta(t_j,t_{j'},x,x')\ge  \frac{c_0^{-1}}{(t_{j'}-t_j)^{{\rm \bf S}}}.
\end{equation}
In particular $\exists c>0,c_0\ge 1, \forall 0\le j<j'\le N,\ \forall (x,x')\in (\R^d)^2,\ d_{t_{j'}-t_j}^2(x,x')\le 2 R_0, \ p^\Delta(t_j,t_{j'},x,x')\ge c_0^{-1}   p_{c^{-1}}(t_{j'}-t_j,x,x')$.
%To complete the proof in short time 

\textit{Chaining in case (a)}.
Let us introduce: $\forall 0\le s<t\le T,\ (x,x',y)\in (\R^d)^3,\ p^{\Delta,y}(s,t,x,x')\d x':=\P[X_t^\Delta \in \d x' |X_s^\Delta=x, X_{\phi(s)}^\Delta=y] $. Equation \eqref{BD_COMP} provides a lower bound for the density of the scheme when $s,t$ correspond to discretization times. For the chaining the first step consists in extending this result to arbitrary times $0\le s<t\le T$. Precisely, if $d_{t-s}^2(x,x') \le R_0/12 $ we prove that 
\begin{equation}
\label{BD_COMP_ARB_TIME}
\exists c_0\ge 1,\ \forall 0\le s<t\le T,\ \forall y, \ p^{\Delta,y}(s,t,x,x')\ge c_0^{-1}(t-s)^{-d/2}.
\end{equation}
If $\phi(t)=\phi(s) $, the above density is Gaussian and \eqref{BD_COMP_ARB_TIME} holds.
If $ \phi(t)=(\phi(s)+\Delta)$, equation \eqref{BD_COMP_ARB_TIME} directly follows from a convolution argument between two Gaussian random variables. Note anyhow carefully that the ``crude" convolution argument cannot be iterated $L$ times for an arbitrary large $L$. Indeed, in that case the constants would have a geometric decay. 
Thus, for  $\phi(t)-(\phi(s)+\Delta)\ge \Delta $ we write
\begin{eqnarray}
p^{\Delta,y}(s,t,x,x')&=& \bint{(\R^d)^2}^{}p^{\Delta,y}(s,\phi(s)+\Delta,x,x_1)p^\Delta(\phi(s)+\Delta,\phi(t),x_1,x_2)p^{\Delta}(\phi(t),t,x_2,x')\d x_1 \d x_2\nonumber\\
&\ge& \bint{B_R(s,t,x,x')}^{}p^{\Delta,y}(s,\phi(s)+\Delta,x,x_1)p^\Delta(\phi(s)+\Delta,\phi(t),x_1,x_2)p^{\Delta}(\phi(t),t,x_2,x')\d x_1 \d x_2\nonumber\\
\label{Mino_int}
\end{eqnarray}
where $B_R(s,t,x,x'):= \{x_1\in \R^d :d_{\phi(s)+\Delta-s}^2(x,x_1) \le R \}\times \{x_2\in \R^d: d_{t-\phi(t)}^2(x_2,x') \le R \}$ for $R>0$ to be specified later on. %%% On a change K en R pour ne pas confondre avec le $K$ qui intervient dans la definition de L.
Now, for $(x_1,x_2)\in B_R(s,t,x,x') $,
\begin{eqnarray*}
d_{\phi(t)-(\phi(s)+\Delta)}^2(x_1,x_2)=\frac{|x_1-x_2|^2}{\phi(t)-(\phi(s)+\Delta)}\le \frac{2|x_1-x|^2+4|x-x'|^2+4|x_2-x'|^2}{\phi(t)-(\phi(s)+\Delta)}\le 6R+ R_0,
\end{eqnarray*}
where we used that for $\phi(t)-(\phi(s)+\Delta)\ge \Delta ,\ \frac{1}{\phi(t)-(\phi(s)+\Delta)}\le \frac{3}{t-s}$ in the last inequality. Taking $R=R_0/6$ we obtain that $\forall (x_1,x_2)\in  B_R(s,t,x,x'),\ d_{\phi(t)-(\phi(s)+\Delta)}^2(x_1,x_2)\le 2R_0$. We therefore derive from \eqref{BD_COMP} and \eqref{Mino_int} that $\exists c_0>0 $,
\begin{eqnarray*}
p^{\Delta,y}(s,t,x,x')\ge c_0^{-1}(\phi(s)+\Delta -s)^{-d/2}(t-\phi(t))^{-d/2}(\phi(t)-(\phi(s)+\Delta))^{-d/2} \bint{(\R^d)^2}^{} \I_{(x_1,x_2)\in B_R(s,t,x,x')} \d x_1 \d x_2.
\end{eqnarray*}
Since $\phi(t)-(\phi(s)+\Delta) \le t-s$ and there exists $\tilde c>0 $ s.t. $|\{x_1\in \R^d : d_{\phi(s)+\Delta-s}^2
(x,x_1)%|x_1-x|^2/(\phi(s)+\Delta-s)
\le R \} | \ge \tilde c(\phi(s)+\Delta-s)^{d/2},\ |\{x_2\in \R^d: d_{t-\phi(t)}^2(x_2,x')%|x_2-x'|^2/(t-\phi(t))
\le R \}|\ge \tilde c(t-\phi(t))^{d/2}  $ where $|.| $ stands for the Lebesgue measure of a given set in $\R^d$, we derive \eqref{BD_COMP_ARB_TIME}
from the above equation up to a modification of $c_0$.

It now remains to do the chaining when for $0\le j<j'\le N, \ (x,x')\in (\R^d)^2 $ we have $d_{t_{j'}-t_j}^2(x,x')\ge 2 R_0\ge 1 $.
Set $L=\lceil   K d_{t_{j'}-t_j}^2(x,x')\rceil$, for $K\ge 1$ to be specified later on and $h:=(t_{j'}-t_j)/L$. Note that $L\ge 1 $. For all $i\in \leftB 0,L\rightB$ we denote $s_i=t_j+ih, y_i=x+ \frac iL (x'-x) $ so that $s_0=t_j, s_L=t_{j'},\ y_0=x, y_L=x'$. Introduce now $\rho:=d_{t_{j'}-t_j}(x,x') (t_{j'}-t_j)^{1/2}/L = |x'-x|/L$
and for all $i\in\leftB 1,L-1\rightB,\ B_i:=\{x\in \R^d: |x-y_i| \le \rho \}$. Note that with the previous definitions $\forall i\in \leftB 0,L-1\rightB,\ |y_{i+1}-y_i | = |x'-x|/L=\rho$. Thus,
\begin{equation}
\label{PROP_BALLS}
\begin{split}
\forall x_1\in B_1, \ |x-x_1|\le 2 \rho,& \ \forall i\in \leftB 1,L-2\rightB,\ (x_i,x_{i+1})\in B_i\times B_{i+1}, \ |x_i-x_{i+1}|\le 3\rho,\\
& \forall x_{L-1}\in B_{L-1},\ | x_{L-1}-x'| \le 2 \rho.
\end{split}
\end{equation}
We can now choose $K$ large enough s.t. 
\begin{equation} \label{BORNE_DIST}
3\rho/\sqrt h=3d_{t_{j'}-t_j}(x,x')/\sqrt L\le (R_0/12)^{1/2} 
\end{equation}
so that according to \eqref{BD_COMP_ARB_TIME}, denoting $x_0=x, x_L=x'$, for all $i\in \leftB 0,L-1\rightB,\ \forall y\in \R^d, (x_i ,x_{i+1})\in B_i\times B_{i+1}, \ p^{\Delta,y}(s_i,s_{i+1},x_i,x_{i+1})\ge c_0^{-1}h^{-d/2} $ (with the slight abuse of notation $B_0=\{x\}, B_L=\{x'\}$ and $p^{\Delta,y}(0,h,x_0,$ $x_1)=p^\Delta(0,h,x,x_1) $).

We have
\begin{eqnarray}
\label{useful_mino}
p^\Delta(t_j,t_{j'},x,x') \ge \E_{t_j,x}\br{\I_{ \cap_{i=1}^{L-1} X_{s_i}^\Delta\in B_i} p^{\Delta,X_{\phi(s_{L-1})}^\Delta}(s_{L-1},t_{j'},X_{s_{L-1}}^\Delta ,x')}.
\end{eqnarray}
To proceed we have to distinguish two cases: $h\ge \Delta $ and $h<\Delta $. 
\begin{trivlist}
\item[-]
If $h\ge \Delta $, write from \eqref{useful_mino},
\begin{eqnarray*}
p^\Delta(t_j,t_{j'},x,x') \ge \E_{t_j,x}\br{\I_{ \cap_{i=1}^{L-1} X_{s_i}^\Delta\in B_i} \E[p^{\Delta,X_{\phi(s_{L-1})}^\Delta}(s_{L-1},t_{j'},X_{s_{L-1}}^\Delta ,x')|X_{s_{L-1}}^\Delta,\ X_{\phi(s_{L-1})}^\Delta]}.
\end{eqnarray*}
Since we consider the events $X_{s_{L-1}}^\Delta \in B_{L-1} $, we derive from \eqref{PROP_BALLS}, \eqref{BORNE_DIST} that $|X_{s_{L-1}}^\Delta-x'|/\sqrt h \le 2\rho/\sqrt h\le 3 d_{t_{j'}-t_j}(x,x')/\sqrt L \le (R_0/12)^{1/2} $. Hence, from \eqref{BD_COMP_ARB_TIME}
\begin{align*}
p^\Delta(t_j,t_{j'},x,x') & \ge c_0^{-1}h^{-d/2}\E_{t_j,x} \br{\I_{ \cap_{i=1}^{L-1} X_{s_i}^\Delta\in B_i}} \\
&= c_0^{-1}h^{-d/2}\E_{t_j,x} \br{\I_{ \cap_{i=1}^{L-2} X_{s_i}^\Delta\in B_i}\P[X_{s_{L-1}}^\Delta\in B_{L-1}|X_{s_{L-2}}^\Delta, X_{\phi(s_{L-2})}^\Delta] }.
\end{align*}
Now $\P[X_{s_{L-1}}^\Delta\in B_{L-1}|X_{s_{L-2}}^\Delta, X_{\phi(s_{L-2})}^\Delta]=\int_{B_{L-1}}^{} p^{\Delta,X_{\phi(s_{L-2})}^\Delta}(s_{L-2},s_{L-1},X_{s_{L-2}}^\Delta,y) \d y$, but since we restrict to $X_{s_{L-2}}^\Delta \in B_{L-2}$, according to \eqref{PROP_BALLS}, we have for all $y \in B_{L-1}, \ |X_{s_{L-2}}^\Delta-y|/\sqrt h \le 3\rho/\sqrt h\le (R_0/12)^{1/2} $ for the previous $R$ and therefore \eqref{BD_COMP_ARB_TIME} yields
\begin{eqnarray*}
p^\Delta(t_j,t_{j'},x,x') \ge (c_0^{-1}h^{-d/2})^2 |B_{L-1}|\E_{t_j,x}[\I_{ \cap_{i=1}^{L-2} X_{s_i}^\Delta\in B_i} ].
\end{eqnarray*}
Iterating the process we finally get
\begin{eqnarray*}
p^\Delta(t_j,t_{j'},x,x') \ge (c_0^{-1}h^{-d/2})^{L} \prod_{i=1}^{L-1}|B_{i}|.
\end{eqnarray*}
Observing that 
\begin{equation}
\label{F_MINO}
\exists \tilde c>0,\ \forall i\in \leftB 1,L-1\rightB,\ |B_i|\ge \tilde c \rho^d ,
\end{equation}
we obtain from the previous definition of $h$ and $L$:
\begin{eqnarray}
p^\Delta(t_j,t_{j'},x,x') &\ge&  (c_0^{-1}h^{-d/2})^{L} (\tilde c\rho^d)^{L-1}\label{BRN}\\ 
&\ge &c_0^{-1}(t_{j'}-t_j)^{-d/2}\exp((L-1)\log(c_0^{-1}\tilde c(\rho/\sqrt h)^d) )\ge c_0^{-1}(t_{j'}-t_j)^{-d/2}\exp(-cd_{t_{j'}-t_j}^2(x,x')) \nonumber
\end{eqnarray}
for a suitable $c$ up to a modification of $c_0$. %%%%%% Gaffe a la dependance en temps des constantes les c ne doivent pas dependre de $T$

%%%%%%%%%%% Ceci est encorea reprendre, nuit de 5/12/09
\item[-] If $h< \Delta$. We have to introduce for all $k\in \leftB j,j'),\ I_k:=\{l\in \leftB 0,L-1\rightB,\ s_l \in [t_k,t_{k+1} [ \}$. Rewrite from \eqref{useful_mino}
\begin{eqnarray*}
 p^\Delta(t_j,t_{j'},x,x') &\ge &\E_{t_j,x}[\I_{ \cap_{k=j}^{j'-1}\cap_{i\in I_k}^{ } X_{s_i}^\Delta\in B_{i}}p^{\Delta,X_{\phi(s_{L-1})}^\Delta}(s_{L-1},t_{j'},X_{s_{L-1}}^\Delta ,x')  ].
\end{eqnarray*}
%where $\forall k\in \leftB j,j'),\ \forall i\in \leftB 1, \sharp I_k\rightB,\ I_k^i:=\sum_{l=j}^{k-1}\sharp I_l +i $ with the convention that $\sum_{l=j}^{j-1} =0 $.
\end{trivlist}
Define for all $k\in \leftB j,j') $, $i\in \leftB 1,\sharp I_{k} \rightB, {I_{k}^i}\in I_{k}$ and  $t_{k}\le s_{I_{k}^1}< s_{I_{k}^2}<\cdots< s_{I_{k}^{\sharp I_{k}}} <t_{k+1}$. In particular, for all $i\in \leftB1, \sharp I_{k}-1\rightB,\ s_{I_{k}^{i+1}}-s_{I_{k}^{i}}=h $. Rewrite now,
\begin{eqnarray}
 p^\Delta(t_j,t_{j'},x,x')&\ge & \E_{t_j,x}[\I_{ \cap_{k=j}^{j'-2}\cap_{i\in I_k}^{ } X_{s_i}^\Delta\in B_{i}}\E[\I_{\cap_{i\in I_{j'-1}}^{} X_{s_i}^\Delta \in B_{i}}p^{\Delta,X_{t_{j'-1}}^\Delta}(s_{L-1},t_{j'},X_{s_{L-1}}^\Delta ,x')|\F_{s_{I_{j'-2}^{\sharp I_{j'-2}}}} ]  ].\nonumber\\
\label{EQ_CAS_2}
\end{eqnarray}
Introducing
\begin{eqnarray*}
P_{j'-1,j}:=\E[\I_{\cap_{i\in I_{j'-1}}^{} X_{s_i}^\Delta \in B_i}p^{\Delta,X_{t_{j'-1}}^\Delta}(s_{L-1},t_{j'},X_{s_{L-1}}^\Delta ,x')|\F_{s_{I_{j'-2}^{\sharp I_{j'-2}}}}]\\
 =\E[\I_{X_{s_{I_{j'-1}^1}}^\Delta\in B_{I_{j'-1}^1}}\bint{\prod_{i=2}^{\sharp I_{j'-1} } B_{I_{j'-1}^i}}^{}p^{\Delta,X_{t_{j'-1}}^\Delta}( s_{I_{j'-1}^1},s_{I_{j'-1}^2}, X_{s_{I_{j'-1}^1}}^\Delta, x_2)\\
\times \prod_{i=2}^{\sharp I_{j'-1}-1}p^{\Delta,X_{t_{j'-1}}^\Delta}(s_{I_{j'-1}^i},s_{I_{j'-1}^{i+1}}, x_{i},x_{i+1}) p^{\Delta,X_{t_{j'-1}}^\Delta}(s_{I_{j'-1}^{\sharp I_{j'-1}}}, t_{j'}, x_{\sharp I_{j'-1}},x') \prod_{i=2}^{\sharp I_{j'-1}}\d x_i|\F_{s_{I_{j'-2}^{\sharp I_{j'-2}}}}],
%=\bint{\prod_{i\in I_k}^{ } B_{i}}^{}p^{\Delta,X_{t_{j'-1}}^\Delta}(t_{j'-1} ,s_{I_{j'-1}^1}, X_{t_{j'-1}}^\Delta, x_1)\\
%\times \prod_{i=1}^{\sharp I_{j-1}-1}p^{\Delta,X_{t_{j'-1}}^\Delta}(s_{I_{j'-1}^i},s_{I_{j'-1}^{i+1}}, x_{i},x_{i+1}) p^{\Delta,X_{t_{j'-1}}^\Delta}(s_{I_{j'-1}^i}^{\sharp I_{j'-1}}, t_{j'}, x_{\sharp I_{j'-1}},x') \prod_{i=1}^{\sharp I_{j'-1}}dx_i,
\end{eqnarray*}
we derive from \eqref{PROP_BALLS}, \eqref{BORNE_DIST} and \eqref{BD_COMP_ARB_TIME}
\begin{eqnarray*}
P_{j'-1,j}&\ge & (c_0^{-1}h^{-d/2})^{\sharp I_{j'-1}}\prod_{i=2}^{\sharp I_{j'-1}}|B_{I_{j'-1}^i}| \bint{B_{I_{j'-1}^1}}^{}p^{\Delta,X_{t_{j'-2}}^\Delta}(s_{I_{j'-2}^{\sharp I_{j'-2}}} ,s_{I_{j'-1}^1}, X_{s_{I_{j'-2}^{\sharp I_{j'-2}}}}^\Delta, x_1)\d x_1\\
&\overset{\eqref{F_MINO}}{\ge} &(c_0^{-1}h^{-d/2})^{\sharp I_{j'-1}} (\tilde c \rho^d)^{\sharp I_{j'-1}-1}\bint{B_{I_{j'-1}^1}}^{}p^{\Delta,X_{t_{j'-2}}^\Delta}( s_{I_{j'-2}^{\sharp I_{j'-2}}} ,s_{I_{j'-1}^1},X_{s_{I_{j'-2}^{\sharp I_{j'-2}}}}^\Delta, x_1)\d x_1.
\end{eqnarray*}
Plugging this estimate in \eqref{EQ_CAS_2} we obtain
\begin{eqnarray*}
 p^\Delta(t_j,t_{j'},x,x') &\ge &(c_0^{-1}h^{-d/2})^{\sharp I_{j'-1}} (\tilde c \rho^d)^{\sharp I_{j'-1}-1} \\
 &&\times \E_{t_j,x}[\I_{ \cap_{k=j}^{j'-2}\cap_{i\in I_k}^{ } X_{s_i}^\Delta\in B_{i}} \bint{B_{I_{j'-1}^1}}^{}p^{\Delta, X_{t_{j'-2}}^\Delta}(s_{I_{j'-2}^{\sharp I_{j'-2}}} ,s_{I_{j'-1}^1} , X_{s_{I_{j'-2}^{\sharp I_{j'-2}} }}^\Delta, x_1)   \d x_1  ]\\
 &\ge & (c_0^{-1}h^{-d/2})^{\sharp I_{j'-1}+1} (\tilde c \rho^d)^{\sharp I_{j'-1}}  \E_{t_j,x}[\I_{ \cap_{k=j}^{j'-2}\cap_{i\in I_k}^{ } X_{s_i}^\Delta\in B_{i}}]
\end{eqnarray*}
using once again \eqref{BORNE_DIST}, \eqref{BD_COMP_ARB_TIME} for the last inequality. Iterating this procedure we still obtain \eqref{BRN} and can conclude as in the previous case. \\

%%%%%% reste a faire
\textit{Chaining in case (b)}.
If $d_{t-s}^2(x,x') \le {\tilde c}^{-1}R_0 $, for $\tilde c $ large enough,  we derive similarly to case (a)  that
\begin{equation}
\label{BD_COMP_ARB_TIME_C}
\exists c_0>0,\ \forall 0\le s<t\le T,\ \forall y, \ p^{\Delta,y}(s,t,x,x')\ge c_0^{-1}(t-s)^{-d}.
\end{equation}
Similarly to the previous paragraph we reduce to the case $\phi(t)-(\phi(s)+\Delta)\ge \Delta $.
Then, equation \eqref{Mino_int} still holds and for the previous set $B_R$ with the current definition of $d_.^2(.,.)$. From standard computations, we derive taking a suitable $R$ that $\forall (x_1,x_2)\in  B_R(s,t,x,x'),\ d_{\phi(t)-(\phi(s)+\Delta)}^2(x_1,x_2)\le 2R_0$. %%%%%% La il faudrait peut etre rajouter 
%%%%%% quelques lignes de calcul. Elles sont neanmoins peu exaltantes. Quel est ton avis Vincent?
Therefore,
\begin{equation}
\label{preal_mino_C}
p^{\Delta,y}(s,t,x,x')\ge c_0^{-1}(\phi(s)+\Delta -s)^{-d}(t-\phi(t))^{-d}(\phi(t)-(\phi(s)+\Delta))^{-d} \bint{(\R^d)^2}^{} \I_{(x_1,x_2)\in B_R(s,t,x,x')} \d x_1 \d x_2.
\end{equation}
Define now $\forall (u,y)\in (0,T]\times \R^{d},\ R>0$,  
$$\tilde B_R(u,y):=\{z\in \R^d: |z^{1,d'}-y^{1,d'}|^2/u\le R/7, |z^{d'+1,d}-y^{d'+1,d}-y^{1,d'} u|^2/u^3\le R/24 \} .$$ 
We have that $\forall z\in \tilde B_R(u,y) $:
\begin{eqnarray*}
d_u^2(y,z)&:=&\frac{|z^{1,d'}-y^{1,d'}|^2}{2 u} + 6 \frac{|z^{d'+1,d}-y^{d'+1,d}-\frac{y^{1,d'}+z^{1,d'}}2 u|^2}{u^3}\\
&\le &  12 \frac{|z^{d'+1,d}-y^{d'+1,d}-y^{1,d'} u|^2}{u^3} + 7\frac{|z^{1,d'}-y^{1,d'}|^2}{2u} \le R.
\end{eqnarray*}
Hence $\tilde B_R(\phi(s)+\Delta-s,x)\times \tilde B_R(t-\phi(t),x') \subset B_R(s,t,x,x')$ and therefore $\exists \tilde c>0, |B_R(s,t,x,x')|\ge \tilde c(t-\phi(t))^{d}(\phi(s)+\Delta-s)^{d}$ which plugged into \eqref{preal_mino_C} yields \eqref{BD_COMP_ARB_TIME_C}.\\

It now remains to do the chaining when $d_{t_{j'}-t_j}^2(x,x') \ge 2R_0$. The crucial point is to choose a ``good" path between $x$ and $x'$. In the non degenerated case it was naturally the straight line between the two points (Euclidean geodesic). In our current framework we can relate $d_{t_{j'}-t_j}^2(x,x')$ to a deterministic control problem.	Introduce:
\begin{equation}
 \tag{$\mathcal {CD}$}\label{Ctr_Det} 
 I(t_{j'}-t_j,x,x')=\inf\{ \int_{0}^{t_{j'}-t_j} |\varphi(s)|^2\d s, \phi(0)=x, \ \phi(t_{j'}-t_j)=x' \}, \ \overset{\cdot}{\phi}_t=A \phi_t + B\varphi_t,\end{equation}
with $ \ A=\left(\begin{array}{cc} \bf{0}_{d'\times d'} & \bf{0}_{d'\times d'}\\
\bf{I}_{d'\times d'} & \bf{0}_{d'\times d'} \end{array}\right), B=\left(\begin{array}{c} \bf{1}_{d'\times d'} \\ \bf{0}_{d'\times d'}\end{array} \right)
$, $\varphi\in L^2([0,t_{j'}-t_j],\R^{d'}) $. Problem \eqref{Ctr_Det} is a linear deterministic controllability problem that has a unique solution reached for 
\begin{equation}
\label{CTR_OPT_ASIAT}
\varphi_s=B^*[R(t_{j'}-t_j,s)]^*[Q_{t_{j'}-t_j}^{-1}](x'-R(t_{j'}-t_j,0)x),
\end{equation}
 where $R$ stands for the resolvent, i.e. $\forall 0\le t,t_0\le t_{j'}-t_j, \partial_t R(t,t_0)=AR(t,t_0), \ R(t_0,t_0) =\bf{I}_{d\times d} $ and $Q_{t_{j'}-t_j}=\int_{0}^{t_{j'}-t_j}   R(t_{j'}-t_j,s)BB^*R(t_{j'}-t_j,s)^*\d s $ is the Gram matrix, see e.g. Theorem 1.11 Chapter 1 in  Coron \cite{coro:07}. For \eqref{Ctr_Det} the resolvent writes $R(t,t_0)=\left(\begin{array}{cc} \bf{I}_{d'\times d'} & \bf{0}_{d'\times d'}\\ (t-t_0)\bf{I}_{d'\times d'}  &\bf{I}_{d'\times d'}\end{array}\right) $ and therefore
the Gram matrix of the  control problem corresponds to the covariance matrix of the process $X_t=x +\int_{0}^t AX_s  \d s+BW_t$ at time $t_{j'}-t_j $, that is $ Q_{t_{j'}-t_j}=\left(\begin{array}{cc}(t_{j'}-t_j)\bf{I}_{d'\times d'}& (t_{j'}-t_j)^2/2 \bf{I}_{d'\times d'} \\(t_{j'}-t_j)^2/2 \bf{I}_{d'\times d'}& (t_{j'}-t_j)^3/3 \bf{I}_{d'\times d'}\end{array}\right)$. Hence, explicit computations give:
 \begin{equation}
 \label{phi_explicite}
 \forall s\in [0,t_{j'}-t_j] ,\ \varphi_s=\frac{(x')^{1,d'}-x^{1,d'}}{(t_{j'}-t_j)^2} [6s-2(t_{j'}-t_j)]+6\frac{(x')^{d'+1,d}-x^{d'+1,d}-(x')^{1,d'}(t_{j'}-t_j)}{(t_{j'}-t_j)^3} [t_{j'}-t_j-2s],
\end{equation}
and thus,
%\begin{equation*}
 $\frac 12 I(t_{j'}-t_j,x,x')=d_{t_{j'}-t_j}^{2}(x,x')$ defined in \eqref{METRIC}.
% \end{equation*} 
 Now we have a candidate for a deterministic curve around which we can do the chaining. It is simply the deterministic curve $(\phi_s)_{s\in [0,t_{j'}-t_j]} $ solution of \eqref{Ctr_Det} for the above control $(\varphi_s)_{s\in [0,t_{j'}-t_j]} $.

To complete the proof of the chaining it remains to specify how to define the $(s_i)_{i\ge 1}, (y_i)_{i\ge 1} $ and the associated sets. Recall that $2R_0\ge 1$. We set here $L:=\lceil Kd_{t_{j'}-t_j}^2(x,x') \rceil  $ for an integer $K\ge 3$ to be specified later on. In term of the new distance, $L$ is similar in its definition to the one of the previous paragraph. 
%We also need to define some associated sets $(B_i)_{i\in \leftB 1,L-1\rightB} $  for which we  control the Lebesgue measure at the right scale. 
Define $s_0=0 $, 
$s_i:=\inf\{t\in[ s_{i-1},t_{j'}-t_j]: \bint{s_{i-1}}^{t}|\varphi_s|^2 \d s= I(t_{j'}-t_j,x,x')/L\}\wedge (s_{i-1}+(t_{j'}-t_j)/L)\I_{s_{i-1}< (t_{j'}-t_j)(1-\frac{2}L)}+ (t_{j'}-t_j)\I_{s_{i-1}\ge  (t_{j'}-t_j)(1-\frac{2}L)},\ i\ge 1$. 
The previous conditions on $R_0,K$ give the well posedness of this definition.

\begin{LEMME}[Controls on the time step]
\label{LEMME_PAS}
Set for all $i\ge 0$, $\varepsilon_i:=s_{i+1}-s_i $.
There exist a constant $c_1\le 1$ and an integer  $\bar L \in [L-1,L/c_1]$, s.t. $s_{\bar L}=t_{j'}-t_j$ and 
\begin{equation}
\label{Taille_EPS}
\forall i\in \leftB 0, \bar L-2\rightB,\ c_1 \frac{t_{j'}-t_j}{L} \le \varepsilon_i\le \frac{t_{j'}-t_j}{L}, \ \frac{t_{j'}-t_j}L \le \varepsilon_{\bar L-1}\le 2\frac{t_{j'}-t_j}L.
\end{equation}
\end{LEMME}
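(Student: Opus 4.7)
The plan is to first obtain a uniform bound $\sup_{s\in[0,T_0]}|\varphi_s|^2 \le C_0\, d_{T_0}^2(x,x')/T_0$, where $T_0:=t_{j'}-t_j$, and then use this pointwise bound to translate the integral condition in the definition of the $s_i$'s into the claimed time-step estimates. By \eqref{phi_explicite}, $\varphi$ is affine in $s$ (with coefficients depending only on $x, x', T_0$), so $|\varphi_s|^2$ is a non-negative polynomial of degree $2$ in $s$ on $[0,T_0]$. Either directly from \eqref{phi_explicite} by the triangle inequality (each of the two affine contributions is at most a constant times $|(x')^{1,d'}-x^{1,d'}|/T_0^{1/2} + |(x')^{d'+1,d}-x^{d'+1,d}-(x^{1,d'}+(x')^{1,d'})T_0/2|/T_0^{3/2}$, divided by $T_0^{1/2}$), or by the abstract equivalence of $L^\infty$ and $L^1$ norms on the finite-dimensional space of non-negative polynomials of degree $\le 2$ on $[0,T_0]$, one gets
$$\sup_{s\in[0,T_0]}|\varphi_s|^2 \;\le\; \frac{C_0}{T_0}\int_0^{T_0}|\varphi_s|^2\,ds \;=\; \frac{C_0}{T_0}\, I(T_0,x,x') \;=\; \frac{2\,C_0}{T_0}\,d_{T_0}^2(x,x'),$$
recalling that $I(T_0,x,x') = 2\, d_{T_0}^2(x,x')$.

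Next, I would read off the upper bounds on the $\varepsilon_i$ straight from the definition of the $s_i$. If $s_{i-1}<(T_0)(1-2/L)$, the recipe imposes $s_i \le s_{i-1}+T_0/L$, giving $\varepsilon_{i-1}\le T_0/L$; in particular, this applies for every $i\le \bar L-1$. For $i=\bar L-1$, since $s_{\bar L}=T_0$ is enforced when $s_{\bar L -1}\ge T_0(1-2/L)$, we have $\varepsilon_{\bar L-1}=T_0-s_{\bar L-1}\le 2T_0/L$. Conversely, because $s_{\bar L-2}<T_0(1-2/L)$ (otherwise $\bar L$ would be smaller), one has $s_{\bar L-1}\le s_{\bar L-2}+T_0/L < T_0-T_0/L$, hence $\varepsilon_{\bar L-1}>T_0/L$.

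For the lower bound on the interior $\varepsilon_i$ with $i\le \bar L-2$: either $\varepsilon_i=T_0/L$ (the cap is reached), in which case there is nothing to prove; or the infimum part of the definition is active, so that
$$\frac{2\,d_{T_0}^2(x,x')}{L}\;=\;\int_{s_i}^{s_{i+1}}|\varphi_s|^2\,ds \;\le\; \varepsilon_i\,\sup_{s\in[0,T_0]}|\varphi_s|^2 \;\le\; \varepsilon_i\cdot\frac{2C_0\,d_{T_0}^2(x,x')}{T_0},$$
yielding $\varepsilon_i\ge c_1\,T_0/L$ with $c_1:=1/C_0\le 1$. Finally, summing: on the one hand
$$T_0=\sum_{i=0}^{\bar L-1}\varepsilon_i\le (\bar L-1)\frac{T_0}{L}+\frac{2T_0}{L},$$
so $\bar L\ge L-1$; on the other hand, $T_0\ge \bar L\, c_1 T_0/L$ gives $\bar L\le L/c_1$, completing \eqref{Taille_EPS}.

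The main technical step is controlling $\sup_{[0,T_0]}|\varphi|^2$ by an integral of $|\varphi|^2$: this is what converts a global $L^2$ control (coming from the optimal control representation of $d_{T_0}^2$) into the pointwise control needed to bound $\varepsilon_i$ from below. The affine structure of $\varphi$ makes this an essentially elementary calculation, so no further obstacle is expected.
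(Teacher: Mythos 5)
Your proof is correct and follows essentially the same route as the paper's: bound $\sup_{s\in[0,T_0]}|\varphi_s|$ by $c\,T_0^{-1/2}\,d_{T_0}(x,x')$ using the explicit affine form \eqref{phi_explicite}, and convert the integral condition $\int_{s_i}^{s_{i+1}}|\varphi_s|^2\,\d s = I(T_0,x,x')/L$ into the lower bound on $\varepsilon_i$. The only difference is cosmetic (your $L^\infty$--$L^1$ equivalence on degree-two polynomials versus a direct reading of \eqref{phi_explicite}), and your treatment of the endpoint estimates for $\bar L$ and $\varepsilon_{\bar L-1}$ spells out what the paper leaves as ``easily derived''.
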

\textit{Proof.}
We first set $\bar L = \inf \{k \geq 1 : s_k = t_{j'}-t_j\}$. The set
$\{k \geq 1 : s_k = t_{j'}-t_j\}$ is clearly non-empty.
The upper bound in \eqref{Taille_EPS} then follows from the definition of the family $(s_i)_{i\ge 1}$. Suppose now that $s_i < (t_{j'}-t_j)(1-2/L)$ for a given $0 \leq i \leq \bar L-2$. Assume also that $s_{i+1}-s_i < (t_{j'}-t_j)/L$ (otherwise
$\varepsilon_i=(t_{j'}-t_j)/L$). Then,  $\int_{s_i}^{s_{i+1}}|\varphi_s|^2 \d s =I(t_{j'}-t_j,x,x')/L$.
From \eqref{CTR_OPT_ASIAT}, \eqref{phi_explicite}, we deduce that 
$$\exists c_2>0,\ \sup_{0 \leq s \leq t_{j'}-t_j}
|\varphi_s| \leq  c_2 (t_{j'}-t_j)^{-1/2} d_{t_{j'}-t_j}(x,x').$$ 
%$$\varphi_s=B^*T\T_T^{-1}[\hat {\bf R}^{\bfm \phi}(1,\frac sT)]^*[\hat {\bf K}_1^{\bfm \phi}]^{-1} \T_T^{-1}(\x'-\btheta_T(\x)), \ s\in[0,T] $$
%where $\hat {\bf R}^{\bfm \phi}(1,\frac sT),\hat {\bf K}_1^{\bfm \phi} $ are bounded non-degenerate matrices. 
Hence,  we obtain
\begin{equation*}
  \bint{s_i}^{s_{i+1}}|\varphi_s|^2 \d s:=\frac{I(t_{j'}-t_j,x,x')}{L}\le c_2^2 \varepsilon_i\frac{d_{t_{j'}-t_j}^2(x,x')}{(t_{j'}-t_j)}.
\end{equation*}
Recalling that $I(t_{j'}-t_j,x,x')=2d_{t_{j'}-t_j}^2(x,x') $, the lower bound in \eqref{Taille_EPS} follows  for all $i$ s.t. $s_i < T(1-2/L)$. The bound for 
$\bar L$ and the last time step are then easily derived.\qed

Define now for all $i\in \leftB 0,\bar L\rightB,\  y_i={  \phi}_{s_i}$ (in particular $y_0=x $ and $y_{\bar L}=x' $), and for all $i\in\leftB 1, \bar L-1\rightB $,
\begin{equation*}
B_i:=\{z\in\R^{d}: |Q_{K{\rho}^2}^{-1/2}(R(s_i,s_{i-1})y_{i-1}-z)|+|Q_{K{\rho}^2}^{-1/2}(z-R(s_i,s_{i+1})y_{i+1} )|\le 2R_0K^{-1/2}\},
\end{equation*}
where ${\rho}:=d_{t_{j'}-t_j}(x,x')(t_{j'}-t_j)^{1/2}/L$. Because of the transport term, we are led to consider sets that involve the forward transport  from the previous point on the optimal curve and the backward transport of the next point in the above definition. 
Equation \eqref{useful_mino} still holds with $L$ replaced by $\bar L$. Following the strategy of the previous paragraph concerning the conditioning, the end of the proof relies on the following 
\begin{LEMME}[Controls for the chaining]
\label{CTRC}
With the previous assumptions and definitions we have that for $K$ large enough:
\begin{equation}
\label{GOOD_SCALE_CPT_LB}
\begin{split}
\forall i\in\leftB 1, \bar L-2\rightB, \ \forall (x_i,x_{i+1}) \in B_i\times B_{i+1},\\
d^2_{\varepsilon_i}(x_i,x_{i+1}) \le 2R_0,\\
\forall x_1\in B_1,\ d_{s_1}^2 (x, x_{1})\le 2R_0,\\
 \forall x_{\bar L-1}\in B_{\bar L-1},   d_{\varepsilon_{\bar L-1}}^2( x_{\bar L-1} ,x')\le 2R_0.
\end{split}
\end{equation}
%where forall $i\in \leftB 0,\bar L-1\rightB,\ \varepsilon_i:=s_{i+1}-s_i$.
For the same $c_1$ as in Lemma \ref{LEMME_PAS}, 
\begin{equation}
\label{Taille_Boule}
  \forall i\in \leftB 1,\bar L-1\rightB, |B_i|\ge   c_1{\rho}^{2d},
\end{equation}
where $|B_i| $ stands for the Lebesgue measure of the set $B_i$. 
\end{LEMME}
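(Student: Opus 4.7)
\emph{Proof plan for Lemma \ref{CTRC}.} The plan is to exploit the identity $d_u^2(y,z)=\frac{1}{2}|Q_u^{-1/2}(z-R(u,0)y)|^2$, valid with the explicit inverse Gram matrix
\begin{equation*}
Q_u^{-1}=\pa{\begin{array}{cc} 4u^{-1} \mathbf{I_{d'\times d'}} & -6u^{-2}\mathbf{I_{d'\times d'}}\\ -6u^{-2}\mathbf{I_{d'\times d'}} & 12 u^{-3}\mathbf{I_{d'\times d'}}\end{array}},
\end{equation*}
which turns the intrinsic distance of case (b) into a genuine Euclidean norm in the scaled coordinates $Q_u^{-1/2}\cdot$. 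From Lemma \ref{LEMME_PAS}, the definitions $\rho^2=d_{t_{j'}-t_j}^2(x,x')(t_{j'}-t_j)/L^2$ and $L=\lceil K d_{t_{j'}-t_j}^2(x,x')\rceil$, I first check that $K\rho^2\asymp \varepsilon_i\asymp (t_{j'}-t_j)/L$ uniformly in $i$, so the quadratic forms $Q_{\varepsilon_i}^{-1}$ and $Q_{K\rho^2}^{-1}$ are comparable block by block with constants depending only on $c_1$ and $d$; an elementary computation also shows that in these scaled coordinates the transport $R(s_{i+1},s_i)$ acts as $\bigl(\begin{smallmatrix}I&0\\I&I\end{smallmatrix}\bigr)$, hence has dimension-dependent operator norm.

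For the distance control \eqref{GOOD_SCALE_CPT_LB}, given $x_i\in B_i$ and $x_{i+1}\in B_{i+1}$ I decompose
\begin{equation*}
x_{i+1}-R(s_{i+1},s_i)x_i=\bigl[x_{i+1}-R(s_{i+1},s_i)y_i\bigr]-\bigl[y_{i+1}-R(s_{i+1},s_i)y_i\bigr]-R(s_{i+1},s_i)\bigl[x_i-R(s_i,s_{i+1})y_{i+1}\bigr],
\end{equation*}
and take the $Q_{\varepsilon_i}^{-1/2}$ norm. The first and third pieces are directly controlled by the defining inequalities of $B_{i+1}$ and $B_i$ respectively, each being at most $2R_0 K^{-1/2}$ on the $Q_{K\rho^2}^{-1/2}$ scale, hence a constant multiple of $R_0 K^{-1/2}$ on the $Q_{\varepsilon_i}^{-1/2}$ scale. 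The middle piece equals $\int_{s_i}^{s_{i+1}} R(s_{i+1},s)B\varphi(s)\,ds$, and Cauchy--Schwarz together with $\int_{s_i}^{s_{i+1}}|\varphi|^2\le I(t_{j'}-t_j,x,x')/L=2 d_{t_{j'}-t_j}^2(x,x')/L$ bounds its $Q_{\varepsilon_i}^{-1/2}$-norm by a constant times $K^{-1/2}$. Squaring and summing yields $d_{\varepsilon_i}^2(x_i,x_{i+1})\le C(R_0^2+1)K^{-1}\le 2R_0$ as soon as $K$ is chosen large enough in terms of $R_0$. The boundary cases $i=0$ and $i=\bar L-1$ are handled identically, with $x$ (resp. $x'$) playing the role of the absent $y_{-1}$ (resp. $y_{\bar L+1}$).

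For the volume estimate \eqref{Taille_Boule}, the linear change of variables $\tilde z=Q_{K\rho^2}^{-1/2}z$ turns $B_i$ into the genuine ellipsoid $E_i=\{\tilde z: |\tilde z-\tilde c_+|+|\tilde z-\tilde c_-|\le 2R_0K^{-1/2}\}$ with foci $\tilde c_\pm=Q_{K\rho^2}^{-1/2}R(s_i,s_{i\mp 1})y_{i\mp 1}$. The controllability identity $R(s_i,s_{i-1})y_{i-1}-R(s_i,s_{i+1})y_{i+1}=-\int_{s_{i-1}}^{s_{i+1}}R(s_i,s)B\varphi(s)\,ds$, combined with the same Cauchy--Schwarz argument as above, gives $|\tilde c_+-\tilde c_-|\le C_0K^{-1/2}$. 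Taking $K$ large enough so that $C_0\le R_0$, the ellipsoid $E_i$ contains the Euclidean ball of radius $r_0=(\sqrt{3}/2)R_0K^{-1/2}$ around the midpoint of the foci. The explicit determinant $\det Q_u^{1/2}=u^{2d'}/12^{d'/2}=(K\rho^2)^d/12^{d/2}$ then yields $|B_i|\ge \omega_d r_0^d\,(K\rho^2)^d/12^{d/2}=\kappa(K,R_0,d)\,\rho^{2d}$, which is at least $c_1\rho^{2d}$ up to shrinking $c_1$.

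The main technical point is controlling the $Q^{-1/2}$-norm of transport integrals $\int R(s_i,s)B\varphi(s)\,ds$: the bottom block of $Q_u^{-1}$ scales like $u^{-3}$, so a naive Euclidean bound on $\int B\varphi\,ds$ is insufficient. The resolution, in the spirit of the hypoelliptic chaining of \cite{dela:meno:09}, uses the nilpotent structure $A^2=0$ which forces the bottom component of this integral to carry an extra weight $(s_i-s)$; combined with Cauchy--Schwarz this gains exactly the factor $u^3$ needed to compensate the $u^{-3}$ blow-up and delivers the required $O(K^{-1/2})$ estimate. Non-uniformity of the time grid is harmless, since Lemma \ref{LEMME_PAS} guarantees all $\varepsilon_i$ are within a fixed multiplicative constant of $(t_{j'}-t_j)/L$.
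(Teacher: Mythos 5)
Your proposal is correct and follows essentially the same route as the paper: a three-term triangle-inequality decomposition in which two terms are absorbed by the defining inequalities of $B_i$ and $B_{i+1}$ and the middle term is the transport integral of the optimal control, bounded via Cauchy--Schwarz and $\int_{s_i}^{s_{i+1}}|\varphi|^2 \le I(t_{j'}-t_j,x,x')/L$, combined with the comparability $\varepsilon_i \asymp K\rho^2$ from Lemma~\ref{LEMME_PAS}; and for the volume, the ellipsoid viewpoint with foci is a restatement of the paper's inclusion of a small ball $E_i$ around $y_i$ into $B_i$. The only slip is cosmetic and harmless: since $d=2d'$ one has $\det Q_u^{1/2}=u^{d}/12^{d/4}$, not $u^d/12^{d/2}$.
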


Indeed, exploiting, \eqref{Taille_EPS}, \eqref{GOOD_SCALE_CPT_LB} (resp. \eqref{Taille_Boule}) instead of  \eqref{PROP_BALLS}, \eqref{BORNE_DIST} (resp. \eqref{F_MINO}), the proof remains unchanged. The proof of Lemma \ref{CTRC} is postponed to Section \ref{SEC_PREUVE_TEC}. 

\subsection{Proof of the technical Lemmas}
\label{SEC_PREUVE_TEC}

\subsubsection{Proof of Lemma \eqref{THE_LEMME_FOR_BOUNDS}.} 
The key estimate is the following control of the convolution kernel $H^\Delta$. There exist $c>0,C\ge 1$, s.t. for all $0\le j<j'\le N,\ x,x'\in \R^d$,
\begin{equation}
\label{CTR_KER}
|H^\Delta(t_j,t_{j'},x,x')|\le C (t_{j'}-t_{j})^{-1+\eta/2}  p_{c}(t_{j'}-t_{j} ,x,x').
\end{equation}
Indeed this bound yields that for all $0\le j<j'\le N,\ x,x'\in \R^d$
\begin{eqnarray*} 
|\widetilde p^\Delta \otimes_\Delta H^\Delta(t_j,t_{j'},x,x')|\le \Delta\bsum{k=0}^{j'-j-1}\bint{\R^d}^{}  \widetilde p^\Delta(t_j,t_{j+k},x,u) |H^\Delta(t_{j+k},t_{j'},u,x')|\d u\\
\le C^2\Delta\bsum{k=0}^{j'-j-1}(t_{j'}-t_{j+k})^{-1+\eta/2}  p_c(t_{j'}-t_j,x,x')\le C^2(t_{j'}-t_j)^{\eta/2}B\left(1,\frac\eta 2\right)  p_c(t_{j'}-t_j,x,x')
\end{eqnarray*}
using the inequality $\widetilde p^\Delta(t_{j+k}-t_j,x,u)\le C  p_c(t_j,t_{j+k},x,u) $ (cf. Lemma 3.1 of \cite{kona:meno:molc:09} in case (b)) and the semigroup property of $  p_c $ for the last but one inequality. The bound \eqref{CRT_IMP} then follows from the above control and \eqref{CTR_KER} by induction.\\

\textit{Proof of \eqref{CTR_KER}}
We consider two cases.
\begin{trivlist}
\item[-] $j'=j+1$. From \eqref{DEF_KERNEL} we have in this case, $\forall x,x'\in \R^d $,
\begin{eqnarray*}
%\widetilde p^\Delta\otimes_\Delta 
H^{\Delta} (t_j,t_{j'},x,x')=\Delta^{-1}(p^\Delta-\widetilde p^\Delta )(t_j,t_{j'},x,x')
\end{eqnarray*}
which are Gaussian densities.  In case (a) we have $$%\widetilde p^\Delta\otimes_\Delta 
H^{\Delta} (t_j,t_{j'},x,x')=\Delta^{-1}\left(\frac{G\bigl ((\sqrt \Delta\sigma(t_j,x))^{-1}(x'-x-b(t_j,x)\Delta)\bigr)}{\bigl( \Delta^d \det(a(t_j,x))\bigr)^{1/2}}-\frac{G\bigl ((\sqrt \Delta\sigma(t_j,x'))^{-1}(x'-x-b(t_j,x')\Delta)\bigr)}{\bigl( \Delta^d \det(a(t_j,x'))\bigr)^{1/2}}\right), $$ 
where $\forall z\in \R^d,\ G(z)=\exp(-|z|^2/2)(2\pi)^{-d/2}$ stands for the density of the standard Gaussian vector of $\R^d$. In case (b) we get
\begin{multline*}
%\widetilde p^\Delta\otimes_\Delta 
H^{\Delta} (t_j,t_{j'},x,x') = \Delta^{-1}(2\sqrt 3)^{d'}\times
\left(\frac{G\left ( \left (\begin{array}{c}( 
\Delta^{1/2}\sigma(t_j,x))^{-1}((x')^{1,d'}-x^{1,d'}-b_1(t_j,x)\Delta)\\
2\sqrt 3( \Delta^{3/2}\sigma(t_j,x))^{-1}((x')^{d'+1,d}-x^{d'+1,d}-\frac{x^{1,d'}+(x')^{1,d'}}2 \Delta)\end{array}\right)
\right)}{ \Delta^{d} \det(a(t_j,x))}\right.\\
-\left.\frac{G\left(\left(\begin{array}{c} 
( \Delta^{1/2}\sigma(t_j,(x')^\Delta))^{-1}((x')^{1,d'}-x^{1,d'}-b_1(t_j,x')\Delta)\\
2\sqrt 3 ( \Delta^{3/2}\sigma(t_j,(x')^\Delta))^{-1}((x')^{d'+1,d}-x^{d'+1,d}-\frac{x^{1,d'}+(x')^{1,d'}}2 \Delta)
\end{array}
\right) \right)}{ \Delta^{d} \det(a(t_j,(x')^\Delta))}\right),
\end{multline*}
where $(x')^\Delta:= x'-\left(\begin{array}{c}{\mathbf 0}_{d'\times 1}\\
(x')^{1,d'}\Delta
\end{array} \right)$ allows to have good continuity properties to equilibrate the singularities coming from the difference $|x-(x')^\Delta|\le |(x')^{1,d'}-x^{1,d'}|(1+\frac \Delta 2)+|(x')^{d'+1,d}-x^{d'+1,d} -\frac{x^{1,d'}+(x')^{1,d'}}2\Delta| $ with the terms appearing in the exponential.
In all cases, tedious but elementary computations involving the mean value theorem yield that $\exists c>0,C\ge 1 $ s.t.
\begin{equation*}
%\label{FIRST_EQ_One_Step}
| H^{\Delta} (t_j,t_{j'},x,x')|\le C \Delta^{-1+\eta/2}  p_c(\Delta,x,x').
\end{equation*}
\item[-] $j'>j+1$. We write in case (a)
\begin{eqnarray*}
 H^{\Delta} (t_j,t_{j'},x,x')=
\Delta^{-1}\bint{\R^d}^{} G(z)\left\{\left(\widetilde p^\Delta(t_{j+1},t_{j'},x+b(t_j,x)\Delta+\Delta^{1/2}\sigma(t_j,x) z,x' )-\widetilde p^\Delta(t_{j+1},t_{j'},x,x' )\right)\right.\\
-\left. \left(\widetilde p^\Delta(t_{j+1},t_{j'},x+b(t_j,x')\Delta+\Delta^{1/2}\sigma(t_j,x')z,x' )-\widetilde p^\Delta(t_{j+1},t_{j'},x,x' )\right)\right\}\d z:=T_1^{(a)}-T_2^{(a)}.
\end{eqnarray*}
Now exploiting that $\int_{\R^d}^{}G(z)z \d z=0 $, a Taylor expansion at order 3 of $T_1^{(a)}, T_2^{(a)}$ yields
\begin{eqnarray}
\label{decomp_Kernel}
 H^{\Delta} (t_j,t_{j'},x,x')&=&\langle b(t_j,x)-b(t_j,x'),D_x \widetilde p^\Delta(t_{j+1},t_{j'},x,x')\rangle+\frac 12 {\rm Tr}\biggl(\bigl(a(t_j,x)-a(t_j,x') \bigr) D_x^2 \widetilde p^\Delta(t_{j+1},t_{j'},x,x') \biggr)\nonumber\\
  &&+R^\Delta(t_j,t_{j'},x,x'):=(H+R^\Delta)(t_j,t_{j'},x,x').
\end{eqnarray}
In the above equation $H$ is the difference of the infinitesimal generators at time $t_j$ of the processes $(X_t)_{t\ge 0} $ satisfying \eqref{SDE} and the  
Gaussian process $\tilde X_t=x+\int_{t_j}^t b(s,x') \d s+\int_{t_j}^t \sigma(s,x') \d W_s,\ t\ge t_j $, which can be seen as the continuous version of the frozen Markov chain $(\tilde X_{t_i}^{\Delta})_{i\in\leftB j,N \rightB} $ introduced in \eqref{EULER_FRO}, applied to the Gaussian density $\widetilde p^\Delta(t_{j+1},t_{j'},\cdot, x') $ at point $x$. The remainder term writes
\begin{eqnarray*}
R^\Delta(t_j,t_{j'},x,x')=\frac \Delta 2 {\rm Tr}\biggl( \bigl(bb^*(t_j,x) -bb^*(t_j,x') \bigr)D_x^2 \widetilde p^{\Delta}(t_{j+1},t_{j'},x,x')\biggr)+\\
3\Delta^{-1}\bsum{|\nu|=3}^{}\bint{\R^d}^{}\d zG(z) \bint{0}^{1}\d\delta (1-\delta)^2\left[D_x^\nu\widetilde p^\Delta(t_{j+1},t_{j'},x+\delta(b(t_j,x)\Delta+\sigma(t_j,x)\Delta^{1/2}z) ,x')\frac{(b(t_j,x)\Delta+\sigma(t_j,x)\Delta^{1/2}z)^\nu}{\nu!}\right. \\
-\left. D_x^\nu\widetilde p^\Delta(t_{j+1},t_{j'},x+\delta(b(t_j,x')\Delta+\sigma(t_j,x')\Delta^{1/2}z) ,x')\frac{(b(t_j,x')\Delta+\sigma(t_j,x')\Delta^{1/2}z)^\nu}{\nu!}\right]
\end{eqnarray*}
using the following notations for multi-indices and powers. For
$\nu =(\nu _{1},...,\nu _{d})\in \N^{d},\ x=(x_{1},...,x_{d})^{* }$ set
$| \nu | =\nu_{1}+... +\nu _{d},\ \nu!=\nu_1!...\nu_{d}!$, $(x)^{\nu
}=x_{1}^{\nu _{1}} ...\ x_{d}^{\nu_{d}},
\ D_x^{\nu }=D_{x_{1}}^{\nu
_{1}} ... D_{x_{d}}^{\nu _{d}}$. 
Recalling the standard control 
\begin{equation}
\label{der_Gauss}
\exists c>0,C\ge 1,\  \forall \nu, |\nu|\le 4, \forall  0\le j<j'\le N,\ (x,x')\in  (\R^d)^2%\times \R^d
,\ |D_x^\nu \widetilde p^\Delta(t_j,t_{j'},x,x')|\le C(t_{j'}-t_j )^{-|\nu|/2}  p_c(t_{j'}-t_j,x,x')
\end{equation}
for the derivatives of Gaussian densities, we obtain:
\begin{eqnarray*}
|R^\Delta(t_j,t_{j'},x,x')|\le C \Delta |b|_\infty^2(t_{j'}-t_{j+1})^{-1}  p_c(t_{j'}-t_{j+1} ,x,x') +3\Delta^{-1}\biggl|\bsum{|\nu|=3}^{}\bint{\R^d}^{}\d zG(z) \bint{0}^{1}\d\delta (1-\delta)^2\times\\
\left[D_x^\nu\widetilde p^\Delta(t_{j+1},t_{j'},x+\delta(b(t_j,x)\Delta+\sigma(t_j,x)\Delta^{1/2}z) ,x')\left(\frac{(b(t_j,x)\Delta+\sigma(t_j,x)\Delta^{1/2}z)^\nu}{\nu!} -\frac{(b(t_j,x')\Delta+\sigma(t_j,x')\Delta^{1/2}z)^\nu}{\nu!}\right) \right. \\
- \left(D_x^\nu\widetilde p^\Delta(t_{j+1},t_{j'},x+\delta(b(t_j,x')\Delta+\sigma(t_j,x')\Delta^{1/2}z) ,x')-D_x^\nu\widetilde p^\Delta(t_j,t_{j'},x+\delta(b(t_j,x)\Delta+\sigma(t_j,x)\Delta^{1/2}z) ,x')\right)\times\\
\left.\frac{(b(t_j,x')\Delta+\sigma(t_j,x')\Delta^{1/2}z)^\nu}{\nu!}\right]\biggr|
\le C \left[  p_c(t_{j'}-t_{j+1} ,x,x')+\frac{\Delta^{1/2}|x-x'|^\eta}{(t_{j'}-t_{j+1})^{3/2}}\bint{\R^d}^{}\d zG(z)\bint{0}^{1} d\delta (1-\delta)^2 |z|^3\right. \times \\
   p_c(t_{j'}-t_{j+1},x+\delta(b(t_j,x)\Delta+\sigma(t_j,x)\Delta^{1/2}z ), x' )+\frac{\Delta|x-x'|^\eta}{(t_{j'}-t_{j+1})^{2}}\bint{\R^d}^{}\d zG(z)\bint{[0,1]^2}^{} d\delta d\gamma (1-\delta)^2 |z|^4  \times\\
   p_c(t_{j'}-t_{j+1},x+\gamma\delta[(b(t_j,x)-b(t_j,x'))\Delta+(\sigma(t_j,x)-\sigma(t_j,x'))\Delta^{1/2}z ], x' ).
\end{eqnarray*}
Now, using the inequality $\forall \varepsilon\in (0,1), \ |x-x'+\rho|^2\ge |x-x'|^2(1-\varepsilon)+|\rho|^2(1-\varepsilon^{-1}), \forall \rho\in \R^d  $,  taking $\rho=\delta(b(t_j,x)\Delta+\sigma(t_j,x)\Delta^{1/2}z ) $ and $\rho= \gamma\delta[(b(t_j,x)-b(t_j,x'))\Delta+(\sigma(t_j,x)-\sigma(t_j,x'))\Delta^{1/2}z ]$ respectively in the first and second integral we get 
\begin{eqnarray*}
|R^\Delta(t_j,t_{j'},x,x')|\le \frac{C}{(1-\varepsilon)^{d/2}}   p_{(1-\varepsilon)c}(t_{j'}-t_{j+1} ,x,x')\left(1+\frac{\Delta^{1/2}|x-x'|^\eta}{(t_{j'}-t_{j+1})^{3/2}}\bint{\R^d}^{}\d zG(z)\bint{0}^{1} \d\delta (1-\delta)^2 |z|^3\times \right.\\ 
\left. \exp\left (c\frac{|\sigma|_\infty^2 |z|^2\Delta}{t_{j'}-t_{j+1}}(\varepsilon^{-1}-1)\right)+\frac{\Delta|x-x'|^\eta}{(t_{j'}-t_{j+1})^{2}}\bint{\R^d}^{}\d zG(z)\bint{[0,1]^2}^{} \d\delta \d\gamma (1-\delta)^2 |z|^4 \exp\left (c\frac{|\sigma|_\infty^2 |z|^2\Delta}{t_{j'}-t_{j+1}}(\varepsilon^{-1}-1)\right) \right).
\end{eqnarray*}
Choosing $\varepsilon $ sufficiently close to one the above integrals are finite and therefore for different $c,C$ depending on $\varepsilon $ as well, we have
\begin{eqnarray}
\label{CTR_R}
|R^\Delta(t_j,t_{j'},x,x')|\le C   p_{c}(t_{j'}-t_{j+1} ,x,x')\left(1+\frac{|x-x'|^\eta}{(t_{j'}-t_{j+1})} \right)\le C (t_{j'}-t_{j+1})^{-1+\eta/2}  p_{c}(t_{j'}-t_{j+1} ,x,x').
\end{eqnarray}
Now with the definitions of \eqref{decomp_Kernel} we also have from \eqref{der_Gauss}
\begin{eqnarray*}
|H(t_j,t_{j'},x,x')|\le C (t_{j'}-t_{j+1})^{-1+\eta/2}  p_{c}(t_{j'}-t_{j+1} ,x,x').
\end{eqnarray*}
Plugging this last estimate and \eqref{CTR_R} in \eqref{decomp_Kernel} we derive
\begin{equation*}
%\label{FIRST_EQ_Various_Steps}
|H^\Delta(t_j,t_{j'},x,x')|\le C (t_{j'}-t_{j+1})^{-1+\eta/2}  p_{c}(t_{j'}-t_{j+1} ,x,x')\le C (t_{j'}-t_{j})^{-1+\eta/2}  p_{c}(t_{j'}-t_{j} ,x,x').
\end{equation*}

For case (b) we have
\begin{eqnarray*}
 H^{\Delta} (t_j,t_{j'},x,x')=
\Delta^{-1}\bint{\R^d}^{} G(z)\left\{\left(\widetilde p^\Delta(t_{j+1},t_{j'},x_\Delta+B^\Delta(t_j,x)+\Sigma^\Delta(t_j,x) z,x' )-\widetilde p^\Delta(t_{j+1},t_{j'},x_\Delta,x' )\right)\right.\\
-\left. \left(\widetilde p^\Delta(t_{j+1},t_{j'},x_\Delta+B^\Delta(t_j,x')+\Sigma^\Delta\left(t_j,(x')_{\Delta,j,j'}\right)z,x' )-\widetilde p^\Delta(t_{j+1},t_{j'},x_\Delta,x' )\right)\right\}\d z:=T_1^{(b)}-T_2^{(b)},
\end{eqnarray*}
where $x_\Delta:=x+\left(\begin{array}{c}\mathbf{0}_{d'\times 1}\\ x^{1,d'}\Delta \end{array}\right) $, $(x')_{\Delta,j,j'}:=x'-\left(\begin{array}{c} {\mathbf{0}}_{d'\times 1}\\ (x')^{1,d'}  \end{array}\right) (t_{j'}-t_j) $, and we set $\forall y\in \R^d,\ \Sigma^\Delta(t_j,y):=\left(\begin{array}{cc} \Delta^{1/2}\sigma(t_j,y) &0\\ \Delta^{3/2}\sigma(t_j,y)/2 & \Delta^{3/2}\sigma(t_j,y)/(2\sqrt 3) \end{array} \right),\ B^\Delta(t_j,y):=\left(\begin{array}{c} b_1(t_j,y) \Delta\\  b_1(t_j,y) \Delta^2/2
\end{array}\right) $. 
%Write now
%\begin{eqnarray*}
% H^{\Delta} (t_j,t_{j'},x,x')=\\
% %T_1^{(c)}-T_2^{(c)}:
% \Delta^{-1}\bint{\R^d}^{} G(z)\left\{\left(\widetilde p^\Delta(t_{j+1},t_{j'},x+B^\Delta(t_j,x,x)+\Sigma^\Delta(t_j,x) z,x' )- \widetilde p^\Delta(t_{j+1},t_{j'},x+\left(\begin{array}{c} {\mathbf   0}_{d'\times 1}\\
% x^{1,d'} \Delta\end{array}\right),x' )\right)\right.\\
%%+\left(\widetilde p^\Delta(t_{j+1},t_{j'},x+\tilde B^\Delta(t_j,x)+\tilde \Sigma^\Delta(t_j,x) z,x' )- \widetilde p^\Delta(t_{j+1},t_{j'},x,x' ) \right)\\
%%-\left[\left(\widetilde p^\Delta(t_{j+1},t_{j'},x+B(t_j,x')\Delta+\Delta^{1/2}\Sigma(t_j,x')z,x' )-\widetilde p^\Delta(t_{j+1},t_{j'},x+\tilde B^\Delta(t_j,x')+\tilde \Sigma^\Delta(t_j,x') z,x' )\right)\right.\\
%-\left.\left(\widetilde p^\Delta(t_{j+1},t_{j'},x+ B^\Delta(t_j,x,x')+\Sigma^\Delta\left(t_j,x'+\left(\begin{array}{c} {\mathbf{0}}_{d'\times 1}\\ (x')^{1,d'}  \end{array}\right) (t_{j'}-t_j)\right)z,x' )-\right.\right.\\
%\left.\left. \widetilde p^\Delta(t_{j+1},t_{j'},x+\left(\begin{array}{c} {\mathbf   0}_{d'\times 1}\\
% x^{1,d'} \Delta\end{array}\right),x' )\right) 
%\right\}dz:=T_{1}^{(c)}-T_{2}^{(c)}.
%\end{eqnarray*}
The strategy now relies as in case (a) on Taylor expansions.
Actually the $d'$ first component in the above expression can be handled exactly in the same way. Anyhow, in order to deal with the $d'$ last components we perform the expansion of $T_{1}^{(b)}, T_{2}^{(b)} $ around the point $x_\Delta $. We obtain:
\begin{eqnarray}
\label{decomp_Kernel_C}
 H^{\Delta} (t_j,t_{j'},x,x')&=&\langle b_1(t_j,x)-b_1(t_j,x'),D_{x^{1,d'}} \widetilde p^\Delta(t_{j+1},t_{j'},x_\Delta,x')\rangle\\
&&+ \frac 12 {\rm Tr}\left\{\left(a(t_j,x)-a\left(t_j,(x')_{\Delta,j,j'} \right)\right)D_{x^{1,d'}}^2 \widetilde p^\Delta(t_{j+1},t_{j'},x_\Delta,x')  \right\}\nonumber\\
  &&+R^\Delta(t_j,t_{j'},x_\Delta,x'):=(H+R^\Delta)(t_j,t_{j'},x_\Delta,x'),\nonumber
\end{eqnarray}
where $D_{x^{1,d'}}$ denotes the differentiation w.r.t. the first $d'$ components and similarly to \eqref{decomp_Kernel}, $H$ is the difference of the generators at time $t_j$ of the processes $(X_t)_{t\ge 0} $ satisfying \eqref{SDE} and the  
Gaussian process $\tilde X_t=x+\int_{t_j}^t \left( \begin{array}{c}b_1(s,x')\\ (\tilde X_s)^{1,d'}\end{array} \right) \d s+\int_{t_j}^t B \sigma\left(s,x'-\left(\begin{array}{c} {\mathbf 0}_{\mathbf {d'\times 1}}\\ (x')^{1,d'}\end{array}\right) (t_{j'}-s)\right)
 \d W_s,\ t\in [t_j,t_{j'}] $ (continuous version of $(\tilde X_{t_i}^{\Delta})_{i\in\leftB j,j' \rightB} $ introduced in \eqref{EULER_MODIF_FRO}), applied to the Gaussian density $\widetilde p^\Delta(t_{j+1},t_{j'},\cdot, x'   ) $ at point $x_\Delta $.
%$x+\left(\begin{array}{c} {\mathbf 0}_{1,d'}\\ x^{1,d'}\Delta\end{array}\right)$. 
The remainder term writes
\begin{eqnarray*}
R^\Delta(t_j,t_{j'},x,x')=\left\{\frac \Delta 2 {\rm Tr}\biggl( \bigl(b_1b_1^*(t_j,x) -b_1b_1^*(t_j,x') \bigr)D_{x^{1,d'}}^2 \widetilde p^{\Delta}(t_{j+1},t_{j'},x_\Delta,x')\biggr)\right.+\\
\left.   \frac \Delta 4\left({\rm Tr}\biggl( \bigl(b_1b_1^*(t_j,x) -b_1b_1^*(t_j,x') \bigr){\Delta}+\bigl(a(t_j,x)-a(t_j,(x')_{\Delta,j,j'}) \bigr)  \right)D_{x^{1,d'},x^{d'+1,d}}^2 \widetilde p^{\Delta}(t_{j+1},t_{j'},x_\Delta,x')\biggr) \right\}\\
\left\{\Delta^{-1}\bint{\R^d}^{} \d zG(z) \bint{0}^{1}\d\gamma \left\{\left\langle D_{x^{d'+1,d}} \widetilde p^\Delta(t_{j+1},t_{j'},x_\Delta+\gamma( B^\Delta(t_j,x) +\Sigma^{\Delta}(t_j,x) z),x' ),\left( B^\Delta(t_j,x) +\Sigma^{\Delta}(t_j,x) z \right)^{d'+1,d} \right\rangle\right.\right.\\
-\left.\left. \left\langle D_{x^{d'+1,d}} \widetilde p^\Delta(t_{j+1},t_j',x_\Delta+\gamma(  B^\Delta(t_j,x') +\Sigma^{\Delta}\left(t_j,(x')_{\Delta,j,j'} \right) z),x' ),\left( B^\Delta(t_j,x') +\Sigma^{\Delta}\left(t_j,(x')_{\Delta,j,j'}\right) z \right)^{d'+1,d} \right\rangle\right\}\right\}+\\
\left\{2\Delta^{-1}\bsum{|\theta|=2}^{}\bint{\R^d}^{}\d zG(z) \bint{[0,1]^2}^{}\d\gamma \d\delta (1-\delta)\gamma^2\times\right.\\
\left[ D^\theta D_{x^{1,d}}\widetilde p^\Delta(t_{j+1},t_{j'},x_\Delta+\delta \gamma( B^\Delta(t_j,x) +\Sigma^\Delta(t_j,x) z) ,x')\frac{\left( ( B^\Delta(t_j,x) +\Sigma^\Delta(t_j,x) z) \right)^\theta}{\theta!}( B^\Delta(t_j,x) +\Sigma^\Delta(t_j,x) z)^{1,d'}\right. \\
-\left. \left. D^\theta D_{x^{1,d}}^\nu\widetilde p^\Delta(t_{j+1},t_{j'},x_\Delta+\delta \gamma( B^\Delta(t_j,x') +\Sigma^\Delta(t_j,(x')_{\Delta,j,j'})z) ,x')\frac{\left( ( B^\Delta(t_j,x') +\Sigma^\Delta(t_j,(x')_{\Delta,j,j'}) z)\right)^\theta}{\theta!}\right.\right.\\
\left.\left.\phantom{\bint{}^{}\frac B \Theta}\times( B^\Delta(t_j,x') +\Sigma^\Delta(t_j,(x')_{\Delta,j,j'}) z)^{1,d'}\right]\right\}%\\
:=(R^{1,\Delta}+R^{2,\Delta}+R^{3,\Delta})(t_j,t_{j'},x,x').
\end{eqnarray*}
Let $\mu=(\mu_1,\cdots,\mu_{d'})\in \N^{d'},\ \nu=(\nu_1,\cdots,\nu_{d'})\in \N^{d'} $ be multi-indices. Similarly to \eqref{der_Gauss} we have,
\begin{equation}
\begin{split}
\label{der_Gauss_case_C}
\exists c>0,C\ge 1,\ \forall (\mu,\nu),\ |\mu|\le 3, |\nu|\le 4, & \forall  0\le j<j'\le N,\ (x,x')\in  \R^d\times \R^d,\ \\
& |D_{x^{1,d'}}^\nu D_{x^{d'+1,d}}^\mu\widetilde p^\Delta(t_j,t_{j'},x,x')|\le C(t_{j'}-t_j )^{-(|\nu|/2+3/2|\mu|)}  p_c(t_{j'}-t_j,x,x').
\end{split}
\end{equation}
%Proceeding as in case (a) 
The above control yields
\begin{equation}
\label{CTR_R_c}
|R^{1,\Delta}(t_j,t_{j'},x,x')|\le  C %(t_{j'}-t_{j+1})^{-1+\eta/2}
  p_{c}(t_{j'}-t_{j+1} ,x_\Delta,x')\le C %(t_{j'}-t_{j})^{-1+\eta/2}
  p_{c}(t_{j'}-t_{j} ,x,x'),
\end{equation}
up to a modification of $c,C$ in the last inequality, observing that 
\begin{eqnarray*}
\frac{| (x')^{d'+1,d}-x_\Delta^{d'+1,d}- \frac{(x')^{1,d'}+x_\Delta^{1,d'}}2(t_{j'}-t_{j+1})|^2}{(t_{j'}-t_{j+1})^3}=\frac{| (x')^{d'+1,d}-x^{d'+1,d}- \frac{(x')^{1,d'}+x^{1,d'}}2(t_{j'}-t_{j+1})-x^{1,d'}\Delta|^2}{(t_{j'}-t_{j+1})^3}\\
=\frac{| (x')^{d'+1,d}-x^{d'+1,d}- \frac{(x')^{1,d'}+x^{1,d'}}2(t_{j'}-t_{j})-\frac{(x^{1,d'} -(x')^{1,d'} )}2\Delta|^2}{(t_{j'}-t_{j+1})^3}\\
\ge (1-\varepsilon)\frac{|(x')^{d'+1,d}-x^{d'+1,d}- \frac{(x')^{1,d'}+x^{1,d'}}2(t_{j'}-t_{j}) |^2}{(t_{j'}-t_j)^3} +(1-\varepsilon^{-1})\frac{|(x')^{1,d'}-x^{1,d'}|^2\Delta^2}{4(t_{j'}-t_j)^3}, \ \forall \varepsilon\in (0,1).
\end{eqnarray*}
Let us now turn to $R^{2,\Delta}(t_j,t_{j'},x,x')$. From \eqref{der_Gauss_case_C} we obtain:
\begin{eqnarray*}
|R^{2,\Delta}(t_j,t_{j'},x,x')|\le \frac{C}{(1-\varepsilon)^{d/2}}   p_{(1-\varepsilon)c}(t_{j'}-t_{j+1} ,x_\Delta,x')\left(\frac{1}{(t_{j'}-t_{j+1})^{1/2}}+\frac{\Delta^{1/2}|x-(x')_{\Delta,j,j'}|^\eta}{(t_{j'}-t_{j+1})^{3/2}}\bint{\R^d}^{}\d zG(z) |z|\times \right.\\ 
\left. \exp\left (c|\sigma|_\infty^2 |z|^2(\varepsilon^{-1}-1)\right)+\frac{\Delta^2|x-(x')_{\Delta,j,j'}|^\eta}{(t_{j'}-t_{j+1})^3}\bint{\R^d}^{}\d zG(z) |z|^2 \exp\left (c|\sigma|_\infty^2 |z|^2(\varepsilon^{-1}-1) \right) 
\right).
\end{eqnarray*}
Taking $\varepsilon$ sufficiently close to $1$ the above integrals are finite. Also 
\begin{eqnarray}
\label{CTR_Transp}
|x-(x')_{\Delta,j,j'}|^\eta\le  C(|x^{1,d'}-(x')^{1,d'}|^\eta+|x^{d'+1,d}-(x')^{d'+1,d}-(x')^{1,d}(t_{j'}-t_j)|^\eta)\nonumber \\
\le C(|x^{1,d'}-(x')^{1,d'}|^\eta+|x^{d'+1,d}-(x')^{d'+1,d}-\frac{x^{1,d}+(x')^{1,d}}2 (t_{j'}-t_j)|^\eta+|x^{1,d'}-(x')^{1,d'}|^\eta \left(\frac{t_{j'}-t_j}2\right)^\eta),
\end{eqnarray}
\end{trivlist}
and similarly to \eqref{CTR_R_c}, $ \frac{C}{(1-\varepsilon)^{d/2}}   p_{(1-\varepsilon)c}(t_{j'}-t_{j+1} ,x_\Delta,x')\le C  p_c(t_{j'}-t_{j} ,x,x')$ up to a modification of $c,C$ in the previous r.h.s. Hence, from the definition  of $  p_c $ in case (b) we deduce:
\begin{eqnarray*}
|R^{2,\Delta}(t_j,t_{j'},x,x')|\le C(t_{j'}-t_{j})^{-1+\eta/2}  p_c(t_{j'}-t_{j} ,x,x').
\end{eqnarray*}
Thus, proceeding as in case (a) using \eqref{CTR_Transp}, \eqref{der_Gauss_case_C} to handle $R^{3,\Delta}(t_j,t_{j'},x,x') $, we eventually derive that $|R^\Delta(t_j,t_{j'},x,x')|\le C(t_{j'}-t_{j})^{-1+\eta/2}  p_c(t_{j'}-t_{j} ,x,x') $.
From the definition in equation \eqref{decomp_Kernel_C} we also obtain $|H(t_j,t_{j'},x,x')|\le C(t_{j'}-t_{j})^{-1+\eta/2}  p_c(t_{j'}-t_{j} ,x,x') $ using \eqref{CTR_Transp}, \eqref{der_Gauss_case_C} and the statement \eqref{CTR_KER} follows.
\begin{REM} Note that the time dependence in the frozen dynamics \eqref{EULER_MODIF_FRO} somehow corresponds to the backward transport of the terminal condition.
It is crucial in order to allow from \eqref{CTR_Transp} the compensation of the exploding terms associated to derivatives in $x^{1,d'} $ of order greater than $2$ and derivatives in $x^{d'+1,d} $ of order greater than $1$ appearing in the kernel $H^\Delta$. A similar construction was used in \cite{kona:meno:molc:09}.
\end{REM}
%%%%% Dire en remarque que ces estimees sont sharp, dans le sens ou l'on ne peut concentrer mieux qu'une Gaussienne: lower bound
%%%% Le faire

\subsection{Proof of Lemma \ref{CTRC}}
%We begin with \eqref{Taille_EPS} that will  be useful to derive the two other assertions. The upper bound in \eqref{Taille_EPS} follows from the definition of the $(t_i)_{i\in \leftB 0, M\rightB} $. 
%To derive the lower bound, we just have to consider the case $\bint{s_i}^{s_{i+1}}|\varphi_s|^2 ds:=I(t_{j'}-t_j,x,x')/M $, since otherwise $\varepsilon_i=(t_{j'}-t_j)/M $.
%From the explicit expression of the optimal control, see \eqref{CTR_OPT_ASIAT}, we derive
%$$\exists C>0, \forall s \in [0,t_{j'}-t_j],\ |\varphi_s|\le C\frac{d_{t_{j'}-t_j}(x,x')}{\sqrt {t_{j'}-t_j}}  .$$
   
Let us first prove \eqref{GOOD_SCALE_CPT_LB}. We begin with $(x_i,x_{i+1})\in B_i\times B_{i+1},\ i\in \leftB 1, \bar L-2\rightB $. From Section \ref{SEC_PREUVE_AR}, one can check that $ |Q_{\varepsilon_i}^{-1/2}(R(s_{i+1},s_i)x_i-x_{i+1})|^2=2d^2_{\varepsilon_i}(x_i,x_{i+1})$. Hence,
 \begin{equation*}
\begin{split}
&Q_i:=d_{\varepsilon_i} (x_i,x_{i+1}) =\frac1{\sqrt 2}|Q_{\varepsilon_i}^{-1/2}(R(s_{i+1},s_i)x_i-x_{i+1})|\le c |Q_{\varepsilon_i}^{-1/2}(x_i-R(s_i,s_{i+1})x_{i+1})| \\
&\le c\bigl\{|Q_{\varepsilon_i}^{-1/2}(x_i-R(s_i,s_{i-1})y_{i-1})|+|Q_{\varepsilon_i}^{-1/2}(R(s_i,s_{i-1}) y_{i-1}- y_i)|%\\
%&\hspace{15pt}
+  |Q_{\varepsilon_i}^{-1/2}(y_i-R(s_i,s_{i+1})x_{i+1})| \bigr\}\\
&:=Q_i^1+Q_i^2+Q_i^3.
\end{split}
\end{equation*}
One has
\begin{equation*}
\begin{split}
Q_i^1&\le c\bsum{j=1}^{2}\varepsilon_i^{1/2-j}|(x_i-R(s_i,s_{i-1}) y_{i-1})_j|\le c \bsum{j=1}^{2}\left(\frac{\varepsilon_i}{K{\rho}^2}\right)^{1/2-j}(K^{1/2}{\rho})^{1-2j}|(x_i-R(s_i,s_{i-1} )y_{i-1})_j|,
\end{split}
\end{equation*}
denoting for all $z\in\R^d, z_1:=z^{1,d'},z_2:=z^{d'+1,d}$ with a slight abuse of notation.
Now, from \eqref{Taille_EPS}, $\varepsilon_i/({K{\rho}^2})\ge c_1 ((t_{j'}-t_j)/L)/(Kd_{t_{j'}-t_j}^2(x,x')(t_{j'}-t_j)/L^2)=c_1\frac{L}{Kd_{t_{j'}-t_j}^2(x,x')} $. Thus, recalling $L=\lceil Kd_{t_{j'}-t_j}^2(x,x')\rceil $, $\exists c>0,\ \forall j\in\leftB 1,2\rightB,\ \left(\frac{\varepsilon_i}{K{\rho}^2}\right)^{1/2-j}\le c$ and  
\begin{equation*}
Q_i^1\le c \bsum{j=1}^{2}   
(K^{1/2}{\rho})^{1-2j}|(x_i-R(s_i,s_{i-1})y_{i-1})_j|
\le  c|Q_{K{\rho}^2}^{-1/2}(x_i-R(s_i,s_{i-1}) y_{i-1})|
\le cR_0K^{-1/2},
\end{equation*}
exploiting $x_i\in B_i $ for the last identity. The term $Q_i^3 $ could be handled in a similar way so that $Q_i^1+Q_i^3\le cR_0K^{-1/2} $.
Now $Q_i^2:=\sqrt 2d_{\varepsilon_i}(y_i,y_{i+1})\le I(s_i,s_{i+1},y_i, y_{i+1})^{1/2}\le c\left( \int_{s_i}^{s_{i+1}}|\varphi_s|^2 \d s\right)^{1/2}\le c\frac{d_{t_{j'}-t_j}(x,x')}{L^{1/2}}\le \frac{c}{K^{1/2}}$. Hence, for all $i\in\leftB 1,\bar L-2\rightB $, 
$Q_i\le 2R_0 $ for $K$ large enough independent of $t_{j'}-t_j$. Eventually, for $x_1 \in B_1, x_{\bar L-1}\in B_{\bar L-1}$ the terms $Q_0:=|Q_{\varepsilon_0}^{-1/2}(R(s_1,0)x-x_{1})| $ and $Q_{\bar L-1}:= |Q_{\varepsilon_{\bar L -1}}^{-1/2}(R(t_{j'}-t_j,s_{\bar L -1})x_{\bar L -1}-x')|\le c |Q_{\varepsilon_{\bar L-1}}^{-1/2}(x_{\bar L -1}-R(s_{\bar L -1},t_{j'}-t_j)x')|  $ can be controlled as the previous $Q_i^1,\ i\in\leftB 1,\bar L-2\rightB$ from the definitions of $B_1,B_{\bar L-1} $, so that $Q_i\le 2R_0,\ i\in \{0,\bar L-1\} $ as well. This proves  \eqref{GOOD_SCALE_CPT_LB}.

It now remains to control the Lebesgue measure of the sets $(B_i)_{i\in\leftB 1,\bar L-1\rightB} $. Define for all $i\in\leftB 1, \bar L-1\rightB,\ E_i:=\{z\in \R^{d}: |Q_{K{\rho}^2}^{-1/2}( y_i-z)
|\le 2R_0(3K^{1/2})^{-1}\}$. One has $\exists \check c:=\check c(d)>0,\ |E_i|\ge \check c{\rho}^{2d}$. Let us now prove $E_i\subset B_i $. Write, for all $z\in E_i$,
\begin{eqnarray*}
 R_i&:=&|Q_{K{\rho}^2}^{-1/2}(R(s_i,s_{i-1})y_{i-1}-z)|+|Q_{K{\rho}^2}^{-1/2}(z-R(s_i,s_{i+1})y_{i+1} )|\\
&\le &|Q_{K{\rho}^2}^{-1/2}(R(s_i,s_{i-1})y_{i-1}-y_i)|+2|Q_{K\rho^2}^{-1/2}( y_i-z)
|
 +|Q_{K{\rho}^2}^{-1/2}(y_i-R(s_i,s_{i+1}) y_{i+1} )|\\
 &:=& R_i^1+R_i^2+R_i^3. %\\
%\le C\frac{d_T^2}N+K_1,
\end{eqnarray*} 
The previous definition of $E_i$ gives $R_i^2\le \frac{4R_0}{3K^{1/2}}$. Now, arguments similar to those used to control the above $(Q_i^1,Q_i^2)_{i\in\leftB 1,M-2\rightB}$ yield
\begin{equation*}
\begin{split}
R_i^1\le c\bsum{j=1}^{2}\left(\frac{\varepsilon_i}{K{\rho}^2} \right)^{j-1/2}\varepsilon_ i^{1/2-j}|(R(s_i,s_{i-1}) y_{i-1}-  y_i)_j|\le c\frac{d_{t_{j'}-t_j}(x',x)}{L^{1/2}}\le \frac{c}{K^{1/2}}.
\end{split}
\end{equation*}
Since the term $R_i^3 $ could be handled in the same way we deduce that for $K $ large enough and $R_0 $ large enough w.r.t. the above $c$, $R_i\le 2R_0K^{-1/2}$. Hence $E_i\subset B_i $ which completes the proof. \qed

\bibliographystyle{alpha}
\bibliography{bibli}

 \end{document}